\newtheorem{thm}{Theorem}[section]
\newtheorem{lem}{Lemma}[section]
\newtheorem{prop}{Proposition}[section]
\newtheorem{asum}{Assumption}[section]
\theoremstyle{definition}
\newtheorem{defn}{Definition}[section]
\theoremstyle{remark}
\newtheorem{rem}{Remark}[section]
\numberwithin{equation}{section}
\newcommand{\norm}[1]{\left\Vert#1\right\Vert}
\newcommand\normx[1]{\Vert#1\Vert}
\newcommand{\bu}{\mathbf{u}}
\newcommand{\bx}{\mathbf{x}}
\newcommand{\bnu}{\bm{\nu}}
\newcommand{\ba}{\mathbf{a}}
\newcommand{\bb}{\mathbf{b}}
\newcommand{\bg}{\mathbf{g}}
\newcommand{\bw}{\mathbf{w}}
\newcommand{\bv}{\mathbf{v}}
\newcommand{\ri}{\mathrm{i}}
\newcommand{\bGa}{\mathbf{\Gamma}}
\newcommand{\bvarphi}{\bm{\varphi}}
\newcommand{\bpsi}{\bm{\psi}}
\newcommand{\by}{\mathbf{y}}
\newcommand{\bz}{\mathbf{z}}
\newcommand{\bS}{\mathbf{S}}
\newcommand{\bK}{\mathbf{K}}
\newcommand{\bI}{\mathbf{I}}
\newcommand{\bxi}{\bm{\xi}}
\newcommand{\bzeta}{\bm{\zeta}}
\newcommand{\bbe}{\bm{\beta}}
\newcommand{\Acal}{\mathcal{A}}
\newcommand{\Bcal}{\mathcal{B}}
\newcommand{\Ccal}{\mathcal{C}}
\newcommand{\Ecal}{\mathcal{E}}
\newcommand{\Lcal}{\mathcal{L}}
\newcommand{\Jcal}{\mathcal{J}}
\newcommand{\Hcal}{\mathcal{H}}
\newcommand{\Ncal}{\mathcal{N}}
\newcommand{\Ocal}{\mathcal{O}}
\newcommand{\Qcal}{\mathcal{Q}}
\title{Resonant modes of two hard inclusions within a soft elastic material and their stress estimate }
\begin{document}
\author{
	Hongjie Li\footnote{Yau Mathematical Sciences Center, Tsinghua University, Beijing, China. The work of this author was substantially supported by Research Start Fund (53331004324). (hongjieli@tsinghua.edu.cn; hongjie\_li@yeah.net).}
	\and
	Longjuan Xu\footnote{Academy for Multidisciplinary Studies, Capital Normal University, Beijing 100048, China.
			The work of this author was partially supported by NSF of China (12301141), and Beijing Municipal Education Commission Science and Technology Project (KM202410028001). (longjuanxu@cnu.edu.cn).}
}
\date{}
\maketitle

\begin{abstract}

In this paper, we are concerned with subwavelength resonant modes of two hard inclusions embedding in soft elastic materials to realize negative materials in elasticity. All the $12$ subwavelength resonant frequencies are derived explicitly for general convex resonators. In addition, the resonant modes are categorized into dipolar, quadrupolar, and hybrid groups, facilitating the effective realization of negative mass density, negative shear modulus and double-negative properties (both mass density and shear modulus)  in elastic metamaterials. Moreover, we analyze the stress distribution between two hard inclusions when they are closely touching. We also precisely derive the sharp blow-up rates of the gradient estimates of the resonant modes. Our findings show that certain resonant modes have bounded stress estimates when the curvature of the hard inclusions is appropriately designed.
These results provide valuable insights into the stability of the design and fabrication of elastic metamaterials.
Lastly, we express the scattering wave fields explicitly in terms of resonant modes, offering a clear understanding of their impact on the overall scattering behavior. 

\medskip 

\noindent{\bf Keywords:}~~negative elastic material, subwavelength resonance, high-contrast metamaterials, stress estimates

\noindent{\bf 2010 Mathematics Subject Classification:}~~ 35R30, 35B30, 35Q60, 47G40

\end{abstract}

\section{Introduction}
Metamaterials is a class of man-made materials that exhibit exotic and distinct properties not found in nature. In history, the exploration of artificial materials began in the late 19th century. The first theoretical investigation of the negative-index materials (NIMs) was given by Victor Veselago in 1967 by assuming negative values of permittivity and permeability \cite{vese3699}, which opened up new possibilities in material science. However, NIMs did not receive much attention at that time, as many researchers believed they did not exist.
It was not until the early 21st century that the first NIM was realized in the laboratory using artificial lumped-element loaded transmission lines \cite{sss8847}. Since then, metamaterials have attracted significant attention from both industrial and academic communities,  leading to extensive research and innovation in the field. The peculiar physical properties of metamaterials have led to numerous important applications. Invisibility cloaking, for instance, has been a notable area of interest, enabling the manipulation of electromagnetic waves to render objects invisible \cite{ACK6055, LL0165, LL9023, LLL6014, LL6090, MN1715}. Super-resolution imaging is another application where metamaterials have shown promise, allowing for imaging beyond the diffraction limit \cite{CLH1677, BLLW9091, LLLW9004}. Metamaterials also find applications in waveguides, where they can control and guide the propagation of waves \cite{AE1274, AHY4171, LP5021}. Additionally, commercially available antennas have benefited from the use of metamaterials, enhancing their performance and functionality \cite{KKC1423, QCJ2662}.

Metamaterials are indeed synthetic composite materials created by arranging sub-wavelength resonators, also known as ``meta-atoms", in periodic or random patterns. These meta-atoms are typically constructed by embedding elements like plastics and metals into a matrix, where these materials enjoy high contrasts in specific physical parameters. The term ``sub-wavelength" indicates that the size of the meta-atom is smaller than the wavelength of the incident wave, while ``resonator" denotes that resonance occurs within a specific frequency range. Metamaterials can be classified into various types, including acoustic, electromagnetic, and elastic metamaterials, based on their modified properties. Acoustic metamaterials often utilize sub-wavelength resonators created by embedding bubbles in liquids \cite{AFGL007, CP7599, PhReE.7005}. In electromagnetic metamaterials, common meta-atoms include silicon nanoparticles, split-ring resonators, and conducting wires \cite{ALLZ3224, SC4846, KML2472}. 

In this paper, we mainly consider elastic metamaterials, which have been less explored compared to acoustic and electromagnetic metamaterials.
This limited exploration is mainly due to the coupling between longitudinal and transverse waves in elasticity, causing elastic waves to exhibit properties of both acoustic and electromagnetic waves.
Elastic materials are characterized by three physical parameters: bulk modulus, mass density, and shear modulus. 
To achieve negative values for these parameters, monopolar, dipolar, and quadrupolar resonances should be induced, respectively \cite{NaMLai2011}.
In particular, the works \cite{CTL1646, LSM0301} use the structure of bubbles embedded in soft elastic materials (BSE structures) to induce monopolar resonance, resulting in negative bulk modulus.
The paper \cite{LLZ0572} provides the first mathematical proof of the resonances in BSE structures. To realize negative mass density, studies \cite{Liu00scie, PhReB7101} have investigated hard inclusions embedded in soft elastic materials (HISE structures) to induce dipolar resonance.
The corresponding mathematical study is presented in \cite{LZ2861}.

This paper concentrates on the investigation of a dimer consisting of two hard inclusions embedded in soft elastic materials (THES structures) to induce quadrupolar resonance, thereby achieving negative shear modulus values. For more physical studies, we refer to \cite{NaMLai2011}. It has been demonstrated that both the electric field and the stress field exhibit blow-up behavior between adjacent inclusions when they are brought closer. Please refer to the studies in the context of electrostatics and electromagnetic \cite{ABV2015,AKLLL2007,BLY2009,dfl2022,DL2019,DLY2022,dlz2024,G2015,KLY2015,KL2019,LY2023,LY2009,MPM1988,P1989,W2023} (this is far from a complete list), and stress field within the framework of linear elasticity \cite{ACKLY,BLL2015,BLL2017,KY2019,L2021,LY2017,lz2020}. In the current study, we explore the blow-up behavior of eigenmode gradients as the resonators approach each other. We would like to mention that the blow-up phenomenon here is different from the aforementioned studies since we are considering the resonant scenario. Related studies on the Helmholtz problem can be found in  \cite{ADY2020,LZ2023}.

The contributions of this study are as follows.
First, all 12 resonant frequencies of the THES structure are explicitly derived in Theorem \ref{thm:renfre} under certain assumptions about the dimer structure.
These resonant frequencies exhibit different asymptotic behaviors based on the separation distance of the inclusions and the contrast in material parameters (see Theorem \ref{thm:renfre} and Remark \ref{rem:disren}).  
It is noted that deriving the resonant frequencies involves analyzing a $12\times 12$ matrix, making the analysis highly nontrivial.
Moreover, the resonant modes are classified into three different groups. The first one belongs to the dipolar resonance, which can be used to realize the negative mass density in elastic metamaterials. The second one belongs to the quadrupolar resonances, which can be utilized to achieve negative shear modulus. The last one includes both dipolar and quadrupolar resonances, thus helping to realize the negative mass density and shear modulus simultaneously. Please refer to Remark \ref{rmk-res} for more details. 
Second, we analyze the stress distribution between two closely touching hard inclusions under the subwavelength resonance.
The sharp blow-up rates of the gradient estimate for the resonant modes are derived exactly in Theorem \ref{thm:blowrate}. 
It is shown that the stress concentration of some resonant modes is very strong, which leads to poor stability of the metamaterials. However, the stress estimates of some other resonant modes are of $\Ocal(1)$ if the curvature of the hard inclusions is properly chosen. These results provide valuable insights into the stability of the design and fabrication of elastic metamaterials (see Remark \ref{rem:debr}).
 Third, the scattering wave fields are explicitly expressed in terms of the resonant modes in Theorem \ref{thm:scat}, offering a clear understanding of their impact on the overall scattering behavior of the THES structure.

The paper is organized as follows. Section 2 presents the mathematical formulation of the problem. In Section 3, we provide some preliminaries and auxiliary results for the subsequent analysis.
In Section 4, we explicitly derive the subwavelength resonant frequencies and investigate their resonant modes. 
Section 5 analyzes the blow-up behavior of stress fields between two hard inclusions. Section 6 derives the scattering field in the quasi-static regime using the resonant modes. 
In Appendix \ref{Appendix}, we provide the proofs of some auxiliary results in the text. 


\section{Mathematical formulation}
In this section, we present the mathematical formulation of the problem considered in the paper. 
We study the configuration of a dimer consisting of two hard elastic inclusions embedded in soft elastic materials in three dimensions.
The matrix material is described by two Lam\'e parameters $(\lambda, \mu)$, satisfying the strong ellipticity conditions
\begin{equation}\label{eq:painma}
    \mu>0\quad  \text{and}\quad3\lambda+2\mu>0.
\end{equation}
The density in the background is designated by $\rho$.
Let $D_{1}$ and $D_{2}$ be two disjoint convex open subsets in $\mathbb R^3$ with $C^{2,\alpha}$ boundaries, $\alpha\in(0,1)$, and 
denote the two hard inclusions. The corresponding Lam\'e parameters and the density of the hard inclusions are parameterized by $(\tilde{\lambda}, \tilde{\mu})$ and $\tilde{\rho}$, respectively. Then, we introduce the three dimensionless contrast parameters $\delta$, $\eta$ and $\tau$ defined by 
\begin{equation}\label{eq:conpara}
(\tilde{\lambda}, \tilde{\mu}) = \frac{1}{\delta}(\lambda, \mu), \quad \tilde{\rho} = \frac{1}{\eta}\rho, \quad \tau=\sqrt{\delta/\eta}.
\end{equation}
Since we are considering the configuration that hard inclusions are embedded within soft elastic materials (e.g. lead inclusions coated with silicone in \cite{Liu00scie}), the contrast parameters $\delta$, $\eta$ and $\tau$ satisfy the following conditions:
\[
 \delta\ll 1, \quad \eta\ll 1, \quad \mbox{and}\quad \tau\leq \Ocal(1).
\]

Define the domain $D$ by $D=D_1\cup D_2$ and denote by $D^{e}:=\mathbb{R}^{3} \backslash \overline{D}$ the exterior of the domain $D$. 
Define the elasticity tensor $\mathbb{C}=(C_{ijkl})_{i,j,k,l=1}^3$ by 
$$C_{ijkl}=\lambda\delta_{ij}\delta_{kl} +\mu(\delta_{ik}\delta_{jl}+\delta_{il}\delta_{jk}).$$
Let $\bu^i$ be a time-harmonic incident elastic wave satisfying the elastic equation in the entire space $\mathbb{R}^3$
\begin{equation}\label{eq:inci}
\mathcal{L}_{ {\lambda}, {\mu}}\bu^i(\bx) + {\rho}\omega^2\bu^i(\bx) =0,
\end{equation}
where $\omega>0$ denotes the angular  frequency. 
In \eqref{eq:inci}, the Lam\'e operator $ \mathcal{L}_{\lambda, \mu}$ associated with the parameters $(\lambda,\mu)$ is defined by 
\begin{equation}\label{op:lame}
 \Lcal_{\lambda,\mu}\bu^i:=\nabla\cdot(\mathbb{C}e(\bu^i))=\mu \triangle\bu^i + (\lambda+ \mu)\nabla\nabla\cdot\bu^i.
\end{equation}
Here, $e(\bu^i)$ is the strain tensor given by
$$e(\bu^i)=\frac{1}{2}(\nabla \bu^i+(\nabla\bu^i)^{t}),$$
where $t$ signifies the transpose. Then the total displacement field ${\mathbf{u}}$ of the above described system satisfies the following partial differential equations (PDEs) \cite{L0069}
\begin{equation}\label{eq:or2}
	\left\{
	\begin{array}{ll}
		\mathcal{L}_{ {\lambda},  {\mu}} \bu(\bx) + \rho\tau^2\omega^2 \bu(\bx) =0,    & \bx\in D \medskip \\
		\mathcal{L}_{\lambda, \mu} \bu(\bx) + \rho\omega^2 \bu(\bx) = 0, & \bx\in D^{e},\medskip \\
		\bu(\bx)|_- =  \bu(\bx)|_+,      & \bx\in\partial D, \medskip \\
		\partial_{ {\bnu}} \bu(\bx)|_- = \delta\partial_{\bnu} \bu(\bx)|_+, &  \bx\in\partial D,
	\end{array}
	\right.
\end{equation}
where $\tau$ is given in \eqref{eq:conpara} and the subscripts $\pm$ indicate the limits from outside and inside of $D$, respectively.
Here, the traction operator $\partial_{\bnu}$ is defined by
\begin{equation*}
	\partial_{ {\bnu}}\bu(\bx)  : =\lambda(\nabla \cdot \bu) \bnu +\mu\left(\nabla \bu+(\nabla \bu)^{t}\right) \bnu,
\end{equation*}
with $\bnu$ denoting the exterior unit normal vector to $\partial D$. 
In \eqref{eq:or2}, the scattering wave $\bu^s = \bu-\bu^i$ satisfies the following radiation condition \cite{KupTPET, LLL9974}:
\begin{equation*}
	\begin{split}
		(\nabla\times\nabla\times \bu^s)(\bx)\times\frac{\bx}{|\bx|}-\mathrm{i} {k}_s\nabla\times \bu^s(\bx)=&\mathcal{O}(|\bx|^{-2}),\\
		\frac{\bx}{|\bx|}\cdot[\nabla(\nabla\cdot \bu^s)](\bx)-\mathrm{i} {k}_p\nabla \bu^s(\bx)=&\mathcal{O}(|\bx|^{-2}),
	\end{split}
\end{equation*}
as $|\mathbf{x}|\rightarrow+\infty$, where $\ri$ signifies the imaginary unit and
\begin{equation}\label{pa:ksp}
	{k}_s=\omega/{c}_s, \quad  \quad {k}_p=\omega/{c}_p,
\end{equation}
with
\begin{equation}\label{cs-cp}
{c}_s = \sqrt{{\mu}/{\rho}}, \quad \quad {c}_p=\sqrt{ ({\lambda} + 2 {\mu})/{\rho}}.
\end{equation}
It is well-known that the elastic wave can be decomposed into the shear wave (s-wave) and the compressional wave (p-wave). In \eqref{pa:ksp}, the parameters $k_s$ and $k_p$ signify the wavenumbers of the s-wave and p-wave, respectively. 

In this work, we mainly apply the potential theory to analyze the system \eqref{eq:or2}. Thus, we first present the potential theory in elasticity. Let the function $\bGa^{\omega}=(\Gamma^{\omega}_{i,j})_{i,j=1}^3$ signify the 
fundamental solution of the operator $\Lcal_{\lambda,\mu} + \rho\omega^2$ in three dimensions given by \cite{DLL7678}
\begin{equation}\label{eq:ef}
	\Gamma^{\omega}_{i,j}(\bx)=-\frac{\delta_{ij}}{4\pi\mu|\bx|}e^{\ri k_s |\bx|} + \frac{1}{4\pi \omega^2\rho}\partial_i\partial_j\frac{e^{\ri k_p|\bx|} - e^{\ri k_s|\bx|}}{|\bx|},
\end{equation}
where $\delta_{ij}$ is Kronecker delta function. In particular, when $\omega=0$, the function $\bGa^{0}$ is called the Kelvin matrix of the fundamental solution, and we denote $\bGa^{0}$ by $\bGa$ for simplicity. The function $\bGa$ has the following expression \cite{DLL262}
\[
\Gamma_{i,j} (\bx)= -\frac{1}{8\pi}\left( \frac{1}{\mu} + \frac{1}{\lambda + 2\mu} \right)  \frac{\delta_{ij}}{|\bx|}  -\frac{1}{8\pi}\left( \frac{1}{\mu} - \frac{1}{\lambda + 2\mu} \right)  \frac{\bx_i \bx_j}{|\bx|^3 }. 
\]
Then the single-layer potential associated with the fundamental solution $\bGa^{\omega}$ is defined by
\begin{equation}\label{eq:single}
	\bS_{D}^{\omega}[\bvarphi](\bx)=\int_{\partial D} \bGa^{\omega}(\bx-\by)\bvarphi(\by)ds(\by), \quad \bx\in\mathbb{R}^3,
\end{equation}
for $\bvarphi\in L^2(\partial D)^3$. On the boundary $\partial D$, the conormal derivative of the single-layer potential satisfies the following jump formula
\begin{equation}\label{eq:jump}
	\partial_{\bnu}   \bS_{ D}^{\omega}[\bvarphi]|_{\pm}(\bx)=\left( \pm\frac{1}{2}\bI +  \bK_{ D}^{\omega, *} \right)[\bvarphi](\bx) \quad \bx\in\partial D,
\end{equation}
where
\[
\bK_{ D}^{\omega, *} [\bvarphi](\bx)=\mbox{p.v.} \int_{\partial D} \partial_{\bnu_{\bx}} \bGa^{\omega}(\bx-\by)\bvarphi(\by)ds(\by),
\]
with $\mbox{p.v.}$ standing for the Cauchy principal value. It is noted that the operator $ \bK_{ D}^{\omega, *}$ in \eqref{eq:jump} is called the Neumann-Poincar\'e (N-P) operator, which is a critical operator in the analysis of metamaterials. In what follows, we denote $\bS_{ D}^{0}$, $\bK_{ D}^{0, *}$ by $\bS_{ D}$, $\bK_{ D}^{ *}$, respectively, for simplicity. More details will be given in Subsection \ref{subsec-NP}.

With the help of the potential theory presented above, the solution to the system \eqref{eq:or2} can be written as 
\begin{equation}\label{eq:sol}
	\bu=
	\left\{
	\begin{array}{ll}
		{\bS}_{ D}^{\tau\omega}[\bvarphi](\bx), & \bx\in D,  \smallskip \\
		{\bS}_{ D}^{\omega}[\bpsi](\bx) +\bu^i, &  \bx\in \mathbb{R}^3\backslash \overline{D},
	\end{array}
	\right.
\end{equation}
for some density functions $\bvarphi, \bpsi \in L^2(\partial D)$.
By matching the transmission conditions on the boundary, i.e. the third and fourth conditions in \eqref{eq:or2} and with the help of the jump formula in \eqref{eq:jump}, the density functions $\bvarphi$ and $\bpsi$ in \eqref{eq:sol} should satisfy the following system:
\begin{equation}\label{eq:or}
	\Acal(\omega,\delta) [\Phi](\bx)=F(\bx), \quad \bx\in\partial D,
\end{equation}
where
\[
\Acal(\omega,\delta)=  \left(
\begin{array}{cc}
	{\bS}_{ D}^{\tau\omega} &  -{\bS}_{ D}^{\omega}\medskip \\
	-\frac\bI{2} +  {\bK}_{ D}^{\tau\omega, *} & -\delta\left( \frac\bI{2} +  {\bK}_{ D}^{\omega, *} \right)\\
\end{array}
\right),
\;\;
\Phi= \left(
\begin{array}{c}
	\bvarphi \\
	\bpsi \\
\end{array}
\right)
\;\; \mbox{and} \;\; 
F= \left(
\begin{array}{c}
	\bu^i \\
	\delta\partial_{\bnu}  \bu^i \\
\end{array}
\right).
\]
For the subsequent discussion, we define the spaces $\Hcal:=L^2(\partial D)\times L^2(\partial D)$ and $\Hcal^1:=H^1(\partial D)\times L^2(\partial D)$. It is noted the operator $\Acal(\omega,\delta)$ is bounded from $\Hcal$ to $\Hcal^1$ (c.f. \cite{ABJH6625}). Next, we define the sub-wavelength resonance of the scattering system \eqref{eq:or2} based on the operator $\Acal(\omega,\delta)$.
\begin{defn}
	The sub-wavelength resonance of the scattering system \eqref{eq:or2} occurs if there exists a frequency $\omega\ll 1$ such that the operator $\Acal(\omega,\delta)$ has a nontrivial kernel, i.e. 
	\begin{equation}\label{eq:conre1}
		\Acal(\omega,\delta)[\Phi](\bx)=0,
	\end{equation}
	for some nontrivial $\Phi\in\Hcal$. Here, $\omega$ is called the resonant frequency (or eigenfrequency) and $\Phi$ is called the eigenvector. For each resonant frequency $\omega$, we define the corresponding resonant mode (or eigenmode) as 
 \begin{equation*}
     \bu=
	\left\{
	\begin{array}{ll}
		{\bS}_{ D}^{\tau\omega}[\bvarphi](\bx), & \bx\in D,  \smallskip \\
		{\bS}_{ D}^{\omega}[\bpsi](\bx) , &  \bx\in \mathbb{R}^3\backslash \overline{D}.
	\end{array}
	\right.
 \end{equation*}
\end{defn}

In the study, we shall systematically and comprehensively investigate the resonant characteristics of the system \eqref{eq:or2}. All the subwavelength resonant frequencies shall be derived explicitly. Then the resonant modes are classified into dipolar, quadropolar, and hybrid resonances. Furthermore, the stress fields between two hard inclusions are analyzed quantitatively. Lastly, the scattering wave fields in the system \eqref{eq:or2} are represented in terms of the subwavelength eigenmodes.

\section{Preliminaries and auxiliary results}

In this section, we present some basic notation, preliminaries and auxiliary results that will be used subsequently.

\subsection{Asymptotic notation}
We shall use the following notation in this paper. 

\begin{defn}
Let $f$ be a real- or complex-valued function, and let $g$ be a real-valued
function that remains strictly positive in a neighborhood of $\bx_0$. We write that
\begin{equation*}
f(\bx)=\Ocal(g(\bx))\quad\mbox{as}~\bx\rightarrow\bx_0
\end{equation*}
if and only if there exists a positive constant $M$ such that $|f(\bx)|\leq Mg(\bx)$ for all $\bx$ in a neighborhood of $\bx_0$.
\end{defn}

\begin{defn}
Let $f$ and $g$ be real-valued functions which are strictly positive in a neighborhood of $\bx_0$. We write that
\begin{equation*}
f(\bx)\sim g(\bx)\quad\mbox{as}~{\bx}\rightarrow\bx_0
\end{equation*}
if and only if $f(\bx)=\Ocal(g(\bx))$ as ${\bx}\rightarrow\bx_0$. 
\end{defn}

\subsection{Layer potentials and Neumann-Poincar\'e operator}\label{subsec-NP}
We first provide the asymptotic expansion of the fundamental solution $\bGa^{\omega}$ defined in \eqref{eq:ef} (c.f. \cite{LZ2861}).
\begin{lem}\label{lem:asmfun}
     For $\omega\ll 1$, there holds the following asymptotic expansion of the fundamental solution $\bGa^{\omega}$ defined in \eqref{eq:ef}
	\begin{equation*}
		\mathbf{\Gamma}^{\omega}(\bx) = \sum_{n=0}^{\infty} \omega^n  \mathbf{\Gamma}_n(\bx),
	\end{equation*}
	where
	\[
	\begin{split}
		\mathbf{\Gamma}_n(\bx) = & -\frac{\alpha_1}{4\pi} \frac{\mathrm{i}^n}{(n+2)n!}\left(\frac{n+1}{c^{n}_s} + \frac{1}{c^{n}_p} \right)  |\bx|^{n-1}\mathbf{I}  +\frac{\alpha_2}{4\pi}\frac{\mathrm{i}^n(n-1)}{(n+2)n!}\left(\frac{1}{c^{n}_s} - \frac{1}{c^{n}_p} \right)  |\bx|^{n-3}\bx\bx^t,
	\end{split}
	\]
	with 
	\[
	\alpha_1= \left( \frac{1}{\mu} + \frac{1}{2\mu +\lambda} \right), \quad  \alpha_2 = \left( \frac{1}{\mu} - \frac{1}{2\mu +\lambda} \right)
	\]
 and $c_s, c_p$ defined in \eqref{cs-cp}.
\end{lem}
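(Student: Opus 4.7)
The plan is a direct Taylor expansion of the two pieces of $\bGa^{\omega}$ in powers of $\omega$, followed by bookkeeping to package the coefficients into the announced dyadic form. First I would write $k_s|\bx|=(\omega/c_s)|\bx|$ and $k_p|\bx|=(\omega/c_p)|\bx|$, and expand
\[
e^{\ri k_s|\bx|}=\sum_{n\ge 0}\frac{\ri^{n}\omega^{n}|\bx|^{n}}{c_s^{n}\,n!},\qquad e^{\ri k_p|\bx|}=\sum_{n\ge 0}\frac{\ri^{n}\omega^{n}|\bx|^{n}}{c_p^{n}\,n!},
\]
so that the first summand $-\frac{\delta_{ij}}{4\pi\mu|\bx|}e^{\ri k_s|\bx|}$ contributes immediately the $\omega^{n}$ term $-\frac{\ri^{n}\delta_{ij}}{4\pi\mu\,c_s^{n}\,n!}|\bx|^{n-1}$.

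The delicate step is the second summand $\frac{1}{4\pi\omega^{2}\rho}\partial_i\partial_j\bigl(|\bx|^{-1}(e^{\ri k_p|\bx|}-e^{\ri k_s|\bx|})\bigr)$, which at first sight carries a spurious $\omega^{-2}$ singularity. Termwise the inner series equals $\sum_{n\ge 0}\frac{\ri^{n}\omega^{n}|\bx|^{n-1}}{n!}\bigl(\frac{1}{c_p^{n}}-\frac{1}{c_s^{n}}\bigr)$; the $n=0$ coefficient vanishes identically, while the $n=1$ coefficient is a constant in $\bx$ and is killed by $\partial_i\partial_j$. Hence the effective sum starts at $n=2$, the factor $\omega^{-2}$ absorbs cleanly, and reindexing $n\mapsto n+2$ yields an honest analytic expansion in $\omega$.

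Next I would apply the identity $\partial_i\partial_j|\bx|^{k}=k|\bx|^{k-2}\delta_{ij}+k(k-2)|\bx|^{k-4}x_ix_j$ with $k=n+1$ (after the shift). This produces precisely two tensorial structures: a $\delta_{ij}|\bx|^{n-1}$ piece with coefficient proportional to $(n+1)/((n+2)n!)$ and an $x_ix_j|\bx|^{n-3}$ piece with coefficient proportional to $(n+1)(n-1)/((n+2)n!)$, each multiplied by $\ri^{n+2}(\tfrac{1}{c_p^{n+2}}-\tfrac{1}{c_s^{n+2}})/(4\pi\rho)$. Using $\ri^{n+2}=-\ri^{n}$ and $\mu=\rho c_s^{2}$, $\lambda+2\mu=\rho c_p^{2}$, the prefactor $\frac{1}{\rho}\bigl(\frac{1}{c_s^{n+2}}-\frac{1}{c_p^{n+2}}\bigr)$ rewrites as $\frac{1}{\mu c_s^{n}}-\frac{1}{(\lambda+2\mu)c_p^{n}}$, and the dyadic piece isolates the factor $\alpha_2\bigl(\frac{1}{c_s^{n}}-\frac{1}{c_p^{n}}\bigr)$ up to a manageable discrepancy between $\mu$ and $\lambda+2\mu$ scalings.

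Finally I would combine the $\delta_{ij}|\bx|^{n-1}$ contribution with the direct $-\frac{\delta_{ij}}{4\pi\mu\,c_s^{n}n!}|\bx|^{n-1}$ term from the first summand; a common denominator of $(n+2)n!$ and rearrangement of the $\frac{1}{\mu c_s^{n}}$ and $\frac{1}{(\lambda+2\mu)c_p^{n}}$ contributions produces the scalar factor $\alpha_1\bigl(\frac{n+1}{c_s^{n}}+\frac{1}{c_p^{n}}\bigr)$ advertised in the lemma. The $n=0$ specialization (which has to reproduce the Kelvin matrix $\bGa$ recorded just before the lemma) furnishes a useful cross-check on signs and numerical constants. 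I expect the main obstacles to be strictly bookkeeping: (i) verifying that the would-be $\omega^{-2}$ and $\omega^{-1}$ divergences really cancel after $\partial_i\partial_j$, and (ii) reconciling the $\mu$- and $(\lambda+2\mu)$-scaled terms so that the coefficients collapse neatly into $\alpha_1$ and $\alpha_2$; the series convergence itself is harmless because both underlying exponential series converge absolutely and uniformly on compact subsets of $\mathbb{R}^{3}\setminus\{0\}$.
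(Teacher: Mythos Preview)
The paper does not supply its own proof of this lemma; it is quoted with a citation to \cite{LZ2861}. Your plan---Taylor-expand the two exponentials, verify that the $n=0$ term of the inner series vanishes (since $c_s^0=c_p^0$) and that the $n=1$ term is killed by $\partial_i\partial_j$ acting on a constant so that the $\omega^{-2}$ prefactor is harmless, apply $\partial_i\partial_j|\bx|^{k}=k|\bx|^{k-2}\delta_{ij}+k(k-2)|\bx|^{k-4}x_ix_j$, and regroup---is exactly the standard derivation and is correct.

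One caution about the final bookkeeping: when you actually run your proposed $n=0$ cross-check, you will find that the displayed $\bGa_n$ does \emph{not} specialize to the Kelvin matrix $\bGa$ printed just above the lemma (the $\bx\bx^t$ coefficient in the stated formula vanishes at $n=0$ because $c_s^{0}=c_p^{0}=1$, whereas $\bGa$ carries the nonzero term $-\tfrac{\alpha_2}{8\pi}|\bx|^{-3}\bx\bx^t$). Your direct computation in fact yields the $\delta_{ij}$-coefficient $-\tfrac{\ri^{n}}{4\pi(n+2)n!}\bigl(\tfrac{n+1}{\mu c_s^{n}}+\tfrac{1}{(\lambda+2\mu)c_p^{n}}\bigr)$ and the $x_ix_j$-coefficient $\tfrac{\ri^{n}(n-1)}{4\pi(n+2)n!}\bigl(\tfrac{1}{\mu c_s^{n}}-\tfrac{1}{(\lambda+2\mu)c_p^{n}}\bigr)$, which \emph{do} reduce correctly to $\bGa$ at $n=0$; these do not factor as $\alpha_1(\cdots)$ and $\alpha_2(\cdots)$ in the way the lemma records. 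So the ``reconciliation'' you anticipate in step (ii) will not close up to the printed formula---this reflects a transcription slip in the displayed $\bGa_n$, not a defect in your method.
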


From Lemma \ref{lem:asmfun}, the single layer potential $\bS_{ D}^{\omega}$ defined in \eqref{eq:single} has the following asymptotic expansion:
\begin{equation}\label{eq:sise}
	\bS_{ D}^{\omega} = \bS_{ D} + \sum_{n=1}^\infty \omega^n  \bS_{ D, n},
\end{equation}
where
\[
\bS_{ D, n}[\bvarphi](\bx)=\int_{\partial D} \bGa_n(\bx-\by)\bvarphi(\by)ds(\by).
\]
In particular, one has that
\begin{equation*}
	\bS_{ D, 1}[\bvarphi](\bx)= -\frac{\ri \alpha_1}{12\pi} \left(\frac{2}{c_s} + \frac{1}{c_p} \right) \int_{\partial D} \bvarphi(\by)ds(\by).
\end{equation*}
Then the following two lemmas hold \cite{DLL1067}.

\begin{lem}\label{lem:insin}
    The single layer potential operator $\bS_{ D}$ is invertiable from $L^2(\partial D)$ to $H^1(\partial D)$.
\end{lem}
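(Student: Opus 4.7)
The plan is to establish the three ingredients of an isomorphism between $L^2(\partial D)$ and $H^1(\partial D)$: boundedness, injectivity, and Fredholm index zero. Boundedness is the standard mapping property for the elastic single layer on a $C^{2,\alpha}$ boundary: the Kelvin kernel $\bGa(\bx-\by)$ carries a $|\bx-\by|^{-1}$ singularity and its tangential derivatives are Calder\'on--Zygmund singular integrals on $\partial D$, so $\bS_D: L^2(\partial D)\to H^1(\partial D)$ is continuous.

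For injectivity, suppose $\bS_D[\bvarphi]=0$ on $\partial D$, and set $\bu(\bx):=\bS_D[\bvarphi](\bx)$ for all $\bx\in\mathbb{R}^3$. Since $\bGa$ is the fundamental solution of $\Lcal_{\lambda,\mu}$, the function $\bu$ is continuous across $\partial D$ and satisfies $\Lcal_{\lambda,\mu}\bu=0$ in $D$ and in $D^{e}$. Inside $D$, uniqueness of the interior Dirichlet problem for the strongly elliptic Lam\'e system (which holds thanks to \eqref{eq:painma}) together with the vanishing trace yields $\bu\equiv 0$ in $D$. Outside, the decay $\bu(\bx)=\Ocal(|\bx|^{-1})$, $\nabla\bu(\bx)=\Ocal(|\bx|^{-2})$ as $|\bx|\to\infty$ inherited from $\bGa$, combined with uniqueness of the exterior Dirichlet problem in three dimensions, forces $\bu\equiv 0$ in $D^{e}$. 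Applying the conormal jump relation \eqref{eq:jump} at $\omega=0$ gives
\[
\bvarphi=\partial_{\bnu}\bu|_+-\partial_{\bnu}\bu|_-=0.
\]

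For surjectivity, I would invoke the fact that $\bS_D$ is a classical pseudodifferential operator of order $-1$ on the compact closed surface $\partial D$ whose principal symbol, computed directly from the Kelvin matrix, is an isomorphism on the cotangent fiber. This makes $\bS_D:L^2(\partial D)\to H^1(\partial D)$ Fredholm of index zero, so the trivial kernel established above forces surjectivity by the Fredholm alternative.

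The main obstacle will be the exterior uniqueness step: one has to rule out nonzero decaying solutions of the static Lam\'e system in $D^{e}$. I would handle it via Korn's inequality on $D^{e}$ together with an integration by parts against $\bu$ itself, where the $|\bx|^{-1}$ decay of $\bu$ and $|\bx|^{-2}$ decay of $\nabla\bu$ kill the boundary term at infinity and the strain-energy identity then forces $\bu$ to be an infinitesimal rigid motion, hence identically zero under the decay constraint.
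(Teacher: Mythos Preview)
Your proposal is correct and follows the standard potential-theoretic route: boundedness via the Calder\'on--Zygmund structure of the Kelvin kernel, injectivity from uniqueness of the interior and exterior Dirichlet problems together with the traction jump, and surjectivity from the Fredholm alternative for an elliptic pseudodifferential operator of order $-1$. The exterior uniqueness argument you outline (energy identity plus decay forcing the rigid motion to vanish) is the right way to close that step.

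The paper itself does not supply a proof of this lemma; it simply records the statement and cites \cite{DLL1067} for the result. So there is no competing argument to compare against---your sketch is precisely the kind of self-contained justification one would give if asked to expand the citation. The only cosmetic point is that the surjectivity step can also be phrased without invoking the pseudodifferential calculus, by writing $\bS_D$ as a compact perturbation of an explicitly invertible operator (e.g., the single layer on a nearby sphere or half-space), which is closer in spirit to how the classical references treat the Laplace case; but your symbol argument is equally valid and arguably cleaner.
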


\begin{lem}
	The norm $\normx{\bS_{ D, n}}_{\Lcal(L^2(\partial D), H^1(\partial D))} $ is uniformly bounded with respect to $n$. Moreover, the series in \eqref{eq:sise} is convergent in $\Lcal(L^2(\partial D), H^1(\partial D))$.
\end{lem}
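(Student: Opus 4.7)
The plan is to prove both statements jointly by an explicit pointwise bound on the kernel $\bGa_n$ that exploits the rapid factorial decay of the coefficient $1/((n+2)n!)$ in Lemma \ref{lem:asmfun}. Set $d_D:=\mathrm{diam}(D)$ and $c_{\min}:=\min(c_s,c_p)$; then $|\bx-\by|\leq d_D$ for $\bx,\by\in\partial D$, so everything happens on a ball of fixed radius.

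First I would check that $\bGa_n$ is continuous and has bounded first derivatives on $B_{d_D}(\mathbf 0)$ for every $n\geq 1$. For $n=1$ the second term in Lemma \ref{lem:asmfun} vanishes due to the factor $(n-1)$ while the first term is constant, so $\bGa_1$ is a constant matrix. For $n=2$ the apparent singularity $|\bx|^{n-3}\bx\bx^t=|\bx|^{-1}\bx\bx^t$ is in fact $O(|\bx|)$, and a direct computation shows that its gradient is bounded near the origin. For $n\geq 3$ both terms are polynomial-type functions of $\bx$ of size $O(|\bx|^{n-1})$ with first derivatives of size $O(|\bx|^{n-2})$. Combining this regularity with the explicit coefficients of Lemma \ref{lem:asmfun} yields the pointwise bound
\[
\|\bGa_n\|_{L^\infty(B_{d_D})} + d_D\,\|\nabla\bGa_n\|_{L^\infty(B_{d_D})}\;\leq\;\frac{C\,n^2}{(n+2)\,n!}\Bigl(\frac{d_D}{c_{\min}}\Bigr)^{n-1},
\]
with $C$ depending only on $\alpha_1,\alpha_2$; the factorial prefactor dominates the polynomial growth in $n$.

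Next I would feed this kernel bound into a standard integral-operator estimate on the compact surface $\partial D$. Cauchy--Schwarz in $\by$ controls $|\bS_{D,n}[\bvarphi](\bx)|$ by $\|\bGa_n\|_{L^\infty}|\partial D|^{1/2}\|\bvarphi\|_{L^2(\partial D)}$, and integrating over $\bx\in\partial D$ recovers an $L^2(\partial D)$ bound; the analogous argument for the tangential gradient, using $\nabla\bGa_n$, produces the $H^1(\partial D)$ contribution. The result is
\[
\|\bS_{D,n}\|_{\Lcal(L^2(\partial D),H^1(\partial D))}\;\leq\;C|\partial D|\,\frac{n^2}{(n+2)\,n!}\Bigl(\frac{d_D}{c_{\min}}\Bigr)^{n-1},
\]
which is uniformly bounded (in fact vanishes) as $n\to\infty$. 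Absolute convergence of $\sum_{n\geq 1}\omega^n\bS_{D,n}$ in $\Lcal(L^2(\partial D),H^1(\partial D))$ is then immediate, since the right-hand side above, multiplied by $\omega^n$, is dominated term-by-term by a convergent exponential-type series $\sum (\omega d_D/c_{\min})^n/n!$.

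The only mildly delicate point in the whole argument is the verification that $\bGa_n$ and $\nabla\bGa_n$ remain bounded at the origin for the borderline indices $n=1,2$, since the naive power counting on $|\bx|^{n-3}\bx\bx^t$ would predict a singular kernel; once the algebraic cancellations (the vanishing $(n-1)$ factor for $n=1$ and the explicit $|\bx|^{-1}\bx\bx^t=O(|\bx|)$ simplification for $n=2$) are noted, the remaining steps are routine factorial estimates. An alternative route would be to recognise $\sum_{n\geq 1}\omega^n\bGa_n$ as $\bGa^\omega-\bGa$ and directly differentiate the analytic dependence on $\omega$, but the direct kernel analysis above seems cleaner and yields the uniform bound as a by-product.
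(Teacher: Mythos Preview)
Your argument is correct. The paper itself does not supply a proof of this lemma; it simply attributes the result to reference \cite{DLL1067} (see the sentence ``Then the following two lemmas hold \cite{DLL1067}'' preceding Lemma~\ref{lem:insin}). Your direct kernel estimate is precisely the standard way to fill in such a citation: the factorial prefactor $1/((n+2)n!)$ in the expression for $\bGa_n$ from Lemma~\ref{lem:asmfun} overwhelms the polynomial growth $(d_D/c_{\min})^{n-1}$ coming from $|\bx|^{n-1}$, and since each $\bGa_n$ with $n\geq 1$ is bounded together with its first derivatives on the bounded set $\{\bx-\by:\bx,\by\in\partial D\}$, the Cauchy--Schwarz step gives the $L^2\to H^1$ operator bound immediately.

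Two minor remarks. First, your observation that the borderline cases $n=1,2$ require separate inspection is exactly right; note that for $n=2$ the gradient of $|\bx|^{-1}\bx\bx^t$ is bounded (homogeneous of degree zero) but not continuous at the origin, which is harmless for an $L^\infty$ kernel bound. Second, the exact power of $n$ in your displayed estimate is slightly generous (one gets essentially $C/n!\cdot(d_D/c_{\min})^{n-1}$ for the kernel and one extra factor of $n$ for its gradient), but this only sharpens the conclusion and does not affect the argument.
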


Next, we consider the asymptotic expansion of N-P operator $ \bK_{ D}^{\omega, *}$ given in \eqref{eq:jump}. Using Lemma \ref{lem:asmfun}, we have the following asymptotic expansion:
\begin{equation}\label{eq:npse}
	\bK_{ D}^{\omega, *} = \bK_{ D}^{ *} + \sum_{n=1}^\infty \omega^n  \bK_{ D, n}^{ *},
\end{equation}
where
\[
\bK_{ D, n}^{ *} [\bvarphi](\bx)=\int_{\partial D}  \frac{\partial \bGa_n}{\partial \bnu(\bx)} (\bx-\by)\bvarphi(\by)ds(\by).
\]
In particular, we have 
\[
\bK_{ D, 1}^{ *}=0,
\]
which follows from that the function $ \mathbf{\Gamma}_1(\bx)$ is a constant. Moreover, the following lemma holds.

\begin{lem}
	The norm $\normx{  \bK_{ D, n}^{ *}  }_{\Lcal(L^2(\partial D))} $ is uniformly bounded with respect to $n$. Moreover, the series in \eqref{eq:npse} is convergent in $\Lcal(L^2(\partial D))$ (c.f. \cite{DLL1067}).
\end{lem}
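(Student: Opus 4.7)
The plan is to obtain explicit pointwise bounds on the kernel of $\bK_{D,n}^{*}$ that decay fast enough in $n$ to control the operator norm by a Schur-type or Hilbert--Schmidt estimate. By Lemma \ref{lem:asmfun}, the kernel in question is $\partial_{\bnu(\bx)}\bGa_n(\bx-\by)$, which is built from differentiating two elementary building blocks: $|\bz|^{n-1}\bI$ and $|\bz|^{n-3}\bz\bz^{t}$, evaluated at $\bz=\bx-\by$. For $n\geq 2$, differentiating these produces expressions homogeneous of degree $n-2$ (possibly with bounded but not continuous angular factors for $n=2$), with constants growing at most linearly in $n$ from the chain rule.

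First, I would carry out those derivatives and, using $R:=\mathrm{diam}(\partial D)$, derive a uniform pointwise bound on $\partial D\times\partial D$ of the shape
\[
\bigl|\partial_{\bnu(\bx)}\bGa_n(\bx-\by)\bigr|\;\leq\;C\,\frac{n}{(n+2)\,n!}\!\left(\frac{1}{c_s^{n}}+\frac{1}{c_p^{n}}\right)R^{\,n-2}\;\leq\;\frac{C\,R^{n-2}}{(n-1)!\,c^{\,n}},
\]
where $c:=\min(c_s,c_p)$ and $C$ depends only on $\lambda,\mu$ and the $C^{2,\alpha}$ geometry of $\partial D$ (to control $\bnu(\bx)$). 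For $n=1$, the factor $(n-1)$ in front of the $\bz\bz^{t}$-term, together with constancy of $\bGa_1$, gives $\bK_{D,1}^{*}=0$ as already noted in the text, so the nontrivial estimates begin at $n=2$.

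Second, since $\partial D$ is compact, I would invoke the elementary estimate
\[
\normx{\bK_{D,n}^{*}}_{\Lcal(L^{2}(\partial D))}\;\leq\;|\partial D|\,\sup_{\bx,\by\in\partial D}\bigl|\partial_{\bnu(\bx)}\bGa_n(\bx-\by)\bigr|
\]
(a Schur test, or equivalently a Hilbert--Schmidt bound), which together with the previous step yields $\normx{\bK_{D,n}^{*}}_{\Lcal(L^{2}(\partial D))}\leq C_{n}$ with $C_{n}=\Ocal\!\bigl(R^{n-2}/((n-1)!\,c^{n})\bigr)$. In particular $C_{n}\to 0$, giving uniform boundedness in $n$. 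For the series, the same bound provides
\[
\sum_{n=1}^{\infty}\omega^{n}\normx{\bK_{D,n}^{*}}_{\Lcal(L^{2}(\partial D))}\;\leq\;\sum_{n=2}^{\infty}\frac{C\,|\partial D|\,R^{n-2}}{(n-1)!}\!\left(\frac{\omega}{c}\right)^{n},
\]
which converges for every finite $\omega$ by the ratio test, hence the series \eqref{eq:npse} converges in $\Lcal(L^{2}(\partial D))$.

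The main obstacle I anticipate is book-keeping the derivative of the tensorial factor $|\bz|^{n-3}\bz\bz^{t}$ at small $n$: for $n=2$ the derivative is only bounded and not continuous at $\bz=0$, and one must verify that the normal component of the derivative still satisfies the announced $|\bz|^{n-2}$ bound with a universal (not geometry-dependent singular) constant. This is harmless for the $L^{\infty}$-of-kernel estimate feeding the Schur test, but it must be checked that the apparently singular contributions cancel or are absorbed, using the geometric regularity $\bnu\in C^{1,\alpha}$ afforded by the $C^{2,\alpha}$ boundary. Once this is settled, the rest is mechanical and yields both assertions simultaneously.
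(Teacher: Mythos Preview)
Your approach is correct. The paper does not supply its own proof for this lemma; it simply records the statement and cites \cite{DLL1067}, so there is nothing in the paper itself to compare against. Your argument---pointwise control of the kernel $\partial_{\bnu(\bx)}\bGa_n(\bx-\by)$ from the explicit form of $\bGa_n$ in Lemma~\ref{lem:asmfun}, followed by a Schur (or Hilbert--Schmidt) estimate on the compact surface $\partial D$---is the standard route and is what one would expect in the cited reference.

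Your flagged concern at $n=2$ is not a genuine obstacle. The building blocks $|\bz|$ and $|\bz|^{-1}\bz\bz^{t}$ are homogeneous of degree~$1$; their first partial derivatives are homogeneous of degree~$0$ and hence globally bounded (by their suprema on the unit sphere). Since the conormal derivative $\partial_{\bnu}$ is a bounded-coefficient linear combination of first partials, the kernel $\partial_{\bnu(\bx)}\bGa_2(\bx-\by)$ is bounded on $\partial D\times\partial D$; it is merely discontinuous on the diagonal, and boundedness is all the Schur test requires---no cancellation needs to be invoked. One bookkeeping remark: your displayed intermediate bound omits the factor $(n+1)$ multiplying $1/c_s^{\,n}$ in the formula for $\bGa_n$, but your final estimate $C\,R^{\,n-2}/\bigl((n-1)!\,c^{\,n}\bigr)$ correctly absorbs it.
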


Then we introduce the vector space $\mho$ that is spanned by all linear solutions to the equation
\begin{equation*}
	\left\{
	\begin{array}{ll}
		\mathcal{L}_{ {\lambda}, {\mu}}\bu=0  &  \bx\in D,  \medskip\\
		\partial_{{\bnu}}\bu=0     & \bx\in\partial D.  \medskip
	\end{array}
	\right.
\end{equation*} 
Indeed, the space $\mho$ can be explicitly expressed as \cite{ABJH6625}
\begin{equation*}
	\begin{split}
		\mho = &\left\{  \mathbf{a} + \mathbf{B}\bx, \mathbf{a}\in\mathbb{R}^3, \mathbf{B}\in M^A \right\}, \\
	\end{split}
\end{equation*}
where $M^A$ is the space of antisymmetric matrices with the size $3\times 3$. A direct calculation shows that the dimension of the space $\mho$ is $12$ and the space $\mho$ is spanned by 
\begin{equation}\label{def-xi}
	\bxi_i = \left\{
	\begin{array}{ll}
		\varkappa_i,  &  \mbox{on} \quad \partial D_1,  \medskip\\
		0     & \mbox{on} \quad \partial D_2,  \medskip
	\end{array}
	\right. \qquad 
	\bxi_{i+6} = \left\{
	\begin{array}{ll}
		0   &  \mbox{on} \quad \partial D_1,  \medskip\\
		\varkappa_i,     & \mbox{on} \quad \partial D_2,  \medskip
	\end{array}
	\right. \qquad
	1\leq i \leq 6,
\end{equation}
where the functions $\varkappa_i$, $1\leq i \leq 6$, are given by 
\begin{equation}\label{eq:dpsi}
	\begin{split}
		\varkappa_1 =	\left[\begin{array}{l}
			1 \\
			0\\
			0
		\end{array}\right],\;\; 
		\varkappa_2 &= \left[\begin{array}{l}
			0 \\
			1 \\
			0
		\end{array}\right],\;\; 
		\varkappa_3 = \left[\begin{array}{c}
			0 \\
			0 \\
			1
		\end{array}\right],  \\
		\varkappa_4 = \left[\begin{array}{c}
			x_2 \\
			-x_1 \\
			0
		\end{array}\right],\;\; 
		\varkappa_5 &= \left[\begin{array}{c}
			x_3 \\
			0 \\
			-x_1
		\end{array}\right],\;\;
		\varkappa_6 = \left[\begin{array}{c}
			0 \\
			x_3 \\
			-x_2
		\end{array}\right].
	\end{split}
\end{equation}

\begin{lem}\label{lem:kek}
	The kernel of the operator $-\mathbf{I}/2 +  \bK_{D}$ coincides with the space $\mho$, where $\bK_{D}$ is the adjoint operator of $ \bK^*_{D}$ (c.f. \cite{ABJH6625}).
\end{lem}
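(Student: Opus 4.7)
The plan is to establish $\ker(-\bI/2 + \bK_D) = \mho$ as subspaces of $L^2(\partial D)^3$ by proving the two inclusions separately, using Green's representation formula for the Lam\'e system together with the classical continuity of the conormal derivative of the elastic double-layer potential across $\partial D$. Throughout, I will use the companion double-layer potential with kernel $[\partial_{\bnu(\by)}\bGa(\bx-\by)]^t$, whose interior and exterior boundary traces are $(\pm\bI/2 + \bK_D)[\bvarphi]$, consistently with $\bK_D$ being the $L^2$-adjoint of the operator $\bK_D^*$ appearing in the jump formula \eqref{eq:jump}.

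For the inclusion $\mho \subseteq \ker(-\bI/2 + \bK_D)$, take $\bu \in \mho$. On each component $D_i$ it extends to a rigid motion $\ba + \mathbf{B}\bx$ with antisymmetric $\mathbf{B}$, so $e(\bu)\equiv 0$ and hence $\Lcal_{\lambda,\mu}\bu \equiv 0$ in $D$ together with $\partial_{\bnu}\bu \equiv 0$ on $\partial D$. Green's representation formula on each $D_i$ then reduces, because the single-layer contribution drops out, to $\bu(\bx)$ equaling the double-layer potential of its own trace $\bu|_{\partial D}$ for $\bx \in D$. Taking the interior limit and invoking the jump relation yields $\bu|_{\partial D} = (\bI/2 + \bK_D)[\bu|_{\partial D}]$, which rearranges at once to $(-\bI/2 + \bK_D)[\bu|_{\partial D}] = 0$. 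Applied to the basis $\{\bxi_i\}_{i=1}^{12}$ exhibited in \eqref{def-xi}, this gives the inclusion.

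For the reverse inclusion $\ker(-\bI/2 + \bK_D) \subseteq \mho$, pick $\bvarphi$ in the kernel and define $\bu$ on $\mathbb{R}^3\setminus\partial D$ to be the double-layer potential with density $\bvarphi$. The two-sided jump identities yield $\bu|_{+} = (-\bI/2 + \bK_D)\bvarphi = 0$ and $\bu|_{-} = (\bI/2 + \bK_D)\bvarphi = \bvarphi + (-\bI/2 + \bK_D)\bvarphi = \bvarphi$. Since $\bu$ solves $\Lcal_{\lambda,\mu}\bu = 0$ in $D^e$ with vanishing boundary trace and decays like $|\bx|^{-2}$ at infinity, uniqueness of the exterior Dirichlet problem for the Lam\'e system forces $\bu \equiv 0$ on $D^e$, whence $\partial_{\bnu}\bu|_{+} = 0$ on $\partial D$. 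The continuity of the conormal derivative of the elastic double-layer potential across $\partial D$ then yields $\partial_{\bnu}\bu|_{-} = \partial_{\bnu}\bu|_{+} = 0$, so that $\bu|_D$ solves the homogeneous interior Neumann problem for $\Lcal_{\lambda,\mu}$ on $D = D_1 \cup D_2$ and must therefore be a rigid motion on each $D_i$. Consequently $\bvarphi = \bu|_{-} \in \mho$.

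The substantive analytic obstacle is justifying the continuity of the conormal derivative of the elastic double-layer potential across $\partial D$, which is exactly what the $C^{2,\alpha}$ regularity hypothesis on $\partial D$ is tailored to support; the remaining steps amount to routine bookkeeping with the jump relation \eqref{eq:jump}, Green's identity for the Lam\'e system, and the explicit basis of $\mho$ from \eqref{eq:dpsi}--\eqref{def-xi}. A useful dimensional cross-check comes from Fredholm theory, which gives $\dim\ker(-\bI/2 + \bK_D) = \dim\ker(-\bI/2 + \bK_D^*) = 12 = \dim\mho$, consistent with the asserted equality.
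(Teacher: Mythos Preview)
Your proof is correct and follows the classical layer-potential route: Green's representation for the Lam\'e system, the trace jump relations for the elastic double-layer potential, uniqueness for the exterior Dirichlet problem, continuity of the conormal derivative of the double-layer potential across $\partial D$, and finally the characterization of null-traction solutions as rigid motions on each component. The paper itself does not supply a proof of this lemma; it simply records the result with a citation to \cite{ABJH6625}, so there is no in-paper argument to compare against. Your write-up is essentially what one would expect to find in that reference (or in the book of Kupradze), and the dimensional Fredholm cross-check you include is a nice sanity confirmation.
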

 
\begin{lem}\label{lem:kekske}
	Let $\bzeta_i$ satisfy $\bS_{D}[\bzeta_i] = \bxi_i$ on $\partial D$ with $1\leq i\leq 12$. The kernel of the operator $-\mathbf{I}/2 +  \bK^*_{D}$ is consist of $\bzeta_i$ (c.f. \cite{DKV7353}).
\end{lem}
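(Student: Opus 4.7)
The plan is to use the Plemelj (or Calder\'on) symmetrization identity for the elastostatic layer potentials, namely $\bS_{D}\bK_{D}^{*}=\bK_{D}\bS_{D}$ on $L^{2}(\partial D)$, together with the invertibility of $\bS_{D}$ from Lemma \ref{lem:insin} and the explicit description of $\ker(-\bI/2+\bK_{D})$ from Lemma \ref{lem:kek}. The idea is to transport kernels across the isomorphism $\bS_{D}$: since $\bS_{D}$ intertwines $\bK_{D}^{*}$ and $\bK_{D}$, it maps $\ker(-\bI/2+\bK_{D}^{*})$ bijectively onto $\ker(-\bI/2+\bK_{D})=\mho$.

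First I would check the forward inclusion. Suppose $\bzeta\in L^{2}(\partial D)$ satisfies $(-\bI/2+\bK_{D}^{*})[\bzeta]=0$. Applying $\bS_{D}$ to both sides and invoking the symmetrization identity gives
\begin{equation*}
0=\bS_{D}\bigl(-\tfrac{1}{2}\bI+\bK_{D}^{*}\bigr)[\bzeta]=\bigl(-\tfrac{1}{2}\bI+\bK_{D}\bigr)\bS_{D}[\bzeta],
\end{equation*}
so $\bS_{D}[\bzeta]\in\ker(-\bI/2+\bK_{D})=\mho$ by Lemma \ref{lem:kek}. Hence there exist scalars $c_{i}$ with $\bS_{D}[\bzeta]=\sum_{i=1}^{12}c_{i}\bxi_{i}$, and since $\bS_{D}[\bzeta_{i}]=\bxi_{i}$ and $\bS_{D}$ is injective on $L^{2}(\partial D)$ by Lemma \ref{lem:insin}, we obtain $\bzeta=\sum_{i=1}^{12}c_{i}\bzeta_{i}$.

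Conversely, for the reverse inclusion, I would start from any $\bxi\in\mho$, set $\bzeta:=\bS_{D}^{-1}[\bxi]\in L^{2}(\partial D)$ (well-defined by Lemma \ref{lem:insin}, and this is exactly the definition of $\bzeta_{i}$ when $\bxi=\bxi_{i}$), and reverse the above manipulation: $\bS_{D}(-\bI/2+\bK_{D}^{*})[\bzeta]=(-\bI/2+\bK_{D})[\bxi]=0$, and injectivity of $\bS_{D}$ then forces $(-\bI/2+\bK_{D}^{*})[\bzeta]=0$. This shows $\{\bzeta_{1},\dots,\bzeta_{12}\}\subset\ker(-\bI/2+\bK_{D}^{*})$, and linear independence of $\{\bxi_{i}\}$ together with injectivity of $\bS_{D}$ yields linear independence of $\{\bzeta_{i}\}$, completing the identification.

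The only non-routine ingredient is the elastic Plemelj symmetrization $\bS_{D}\bK_{D}^{*}=\bK_{D}\bS_{D}$; this is classical and available in the reference \cite{DKV7353} already cited in the lemma, so I would simply invoke it rather than reprove it. Everything else is bookkeeping with the isomorphism $\bS_{D}:L^{2}(\partial D)\to H^{1}(\partial D)$, which is why the result reduces to a clean identification of a $12$-dimensional kernel with the rigid-motion space $\mho$.
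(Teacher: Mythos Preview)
Your argument is correct. The paper does not supply its own proof of this lemma; it simply states the result with the citation ``(c.f.\ \cite{DKV7353})'' and moves on. Your proof via the symmetrization identity $\bS_{D}\bK_{D}^{*}=\bK_{D}\bS_{D}$ together with Lemmas \ref{lem:insin} and \ref{lem:kek} is the standard route and exactly what lies behind the cited result, so there is nothing to compare.
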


\subsection{Auxiliary estimates}

In the subsequent analysis, we need the quantitative estimates of $\int_{\partial D} \bzeta_i\bxi_j$, $1\leq i, j\leq 12$. To obtain these estimates, we apply Lemma \ref{lem:kekske}, which yields that ${\bf w}_i:=\bS_{D}[\bzeta_i]$ is the unique solution of the Dirichlet scattering system as follows (c.f. \cite{ABJH6625}): 
\begin{align}\label{eq-wi}
\begin{cases}
\mathcal{L}_{ {\lambda}, {\mu}}{\bf w}_i=0,  &  \bx\in D^{e},\\
{\bf w}_i=\bxi_i,    & \bx\in\partial D,\\
{\bf w}_i(\bx)=\Ocal\left(|\bx|^{-1}\right),& \mbox{as}~|\bx|\rightarrow\infty.
\end{cases}
\end{align}
Then it follows from the jump condition \eqref{eq:jump} and Lemma \ref{lem:kekske} that
\begin{align}\label{int-zeta}
\int_{\partial D} \bzeta_i\bxi_j=\int_{\partial D}\left(\frac{1}{2}\bI +  \bK_{D}^*\right)[\bzeta_i]\bxi_j
=\int_{\partial D}\frac{\partial\bS_{ D}[\bzeta_i]}{\partial{\bnu}}\Bigg|_{+}\bxi_j=
\int_{\partial D}\frac{\partial\bw_i}{\partial{\bnu}}\Bigg|_{+}\bxi_j.
\end{align}
Moreover, by using the definitions of $\mathcal{L}_{ {\lambda}, {\mu}}$ and  $\bw_i$ in \eqref{op:lame} and \eqref{eq-wi}, respectively, and the integration by parts, we obtain
\begin{equation}\label{def-Eij}
\int_{\partial D}\frac{\partial\bw_i}{\partial{\bnu}}\big|_{+}\bxi_j=-\int_{D^e}\left(\mathbb{C}e(\bw_i),e(\bw_j)\right)=:-\mathcal{E}_{i,j}\quad i,j=1,\dots,12.
\end{equation}


To analyze the estimates of $\mathcal{E}_{i,j}$ in \eqref{def-Eij}, we provide more characteristics of the two domains $D_1$ and $D_2$. Let $\varepsilon$ denote the distance between the two domains, namely, 
\begin{equation}\label{eq:disd12}
	\varepsilon:=\mbox{dist}(\partial D_1,\partial D_2).
\end{equation}
We write $(\bx' ,x_3)$ for $\bx = (x_1, x_2, x_3) \in \mathbb{R}^3$
with $\bx' = (x_1, x_2) \in \mathbb{R}^2 $ and $ x_3 \in \mathbb{R}$. 
Denote by $B'_{r}$ a disk on the plane $x_3=0$ with the radius $r$ centered at the origin.
By a translation and rotation of coordinates (if necessary), there exists a constant $R_0$ independent of $\varepsilon$, such that the sections of $\partial D_{1}$ and $\partial D_{2}$ near the origin, respectively, can be represented by
\begin{align}\label{h1h2}
x_{3}=\frac{\varepsilon}{2}+h_{1}(\bx')\quad\mbox{and}\quad x_{3}=-\frac{\varepsilon}{2}+h_{2}(\bx'),\quad\mbox{for}~|\bx'|<2R_0.
\end{align}
In \eqref{h1h2}, the functions $h_i\in C^{2,\alpha}(B'_{2R_0})$, $i=1,2$, have the expressions 
\begin{align}\label{convexity}
h_{i}(\bx')=
(-1)^{i+1}\frac{\kappa}{2}|\bx'|^{2}+O(|\bx'|^{2+\alpha}),
\end{align}
and can be controlled by (c.f. \cite{LX})
\begin{equation*}
\|h_{1}\|_{C^{2,\alpha}(B'_{2R_0})}+\|h_{2}\|_{C^{2,\alpha}(B'_{2R_0})}\leq C,
\end{equation*}
where $C$ is a positive constant independent of $\varepsilon$ and $\kappa$ is the curvature of $\partial D$. Throughout the paper, the constant $C$ is independent of $\varepsilon$, and may vary from line to line in various inequalities. Here we would like to remark that our method can be applied to deal with the
more general inclusions case, say, $h_{i}(\bx')=
(-1)^{i+1}\kappa_i|\bx'|^{2}+O(|\bx'|^{2+\alpha})$ with two positive constants $\kappa_1$ and $\kappa_2$.
For $0<r\leq\,2R_0$, we define the narrow region between $\partial{D}_{1}$ and $\partial{D}_{2}$ as follows:
\begin{equation}\label{narrowreg}
\Omega_r:=\left\{(\bx',x_{3})\in \mathbb{R}^{3}: -\frac{\varepsilon}{2}+h_{2}(\bx')<x_{3}<\frac{\varepsilon}{2}+h_{1}(\bx'),~|\bx'|<r\right\},
\end{equation}
and the vertical distance between $\partial{D}_{1}$ and $\partial{D}_{2}$ is denoted by
\begin{equation}\label{delta_x'}
\delta(\bx'):=\varepsilon+h_{1}(\bx')-h_{2}(\bx')=\varepsilon+\kappa|\bx'|^{2}+O(|\bx'|^{2+\alpha}).
\end{equation}

For the subsequent analysis, we need some assumptions on the structure $D$.
\begin{asum}\label{asu:asd}
We assume the dimer $D$ has the following symmetric assumptions:
\begin{enumerate}
    \item the domain $D$ is symmetric with respect to the axis $x_1$, i.e. $$D(x_1, x_2, x_3) = D(x_1, -x_2, -x_3);$$
    \item the domain $D$ is symmetric with respect to the axis $x_2$, i.e. $$D(x_1, x_2, x_3) = D(-x_1, x_2, -x_3);$$
    \item the domain $D$ is symmetric with respect to the plane $x_3=0$, i.e. $$D(x_1, x_2, x_3) = D(x_1, x_2, -x_3).$$
\end{enumerate}
\end{asum}

\begin{prop}\label{prop-asymp}
Letting $\bw_i$ with $i=1,\dots,12$ be the solution of \eqref{eq-wi}, there holds that
\begin{enumerate}[(i)]
\item for sufficiently small $\varepsilon>0$, 
\begin{equation*}
\mathcal{E}_{i,i}=\frac{\mu\,\pi}{\kappa}|\log\varepsilon|+\mathcal{C}_{i,i}+o(1),~i=1,2,7,8,
\end{equation*}
\begin{equation*}
\mathcal{E}_{i,j}=-\frac{\mu\,\pi}{\kappa}|\log\varepsilon|+\mathcal{C}_{i,j}+o(1),\quad (i,j)=(1,7), (2,8), (7,1), (8,2), 
\end{equation*}
\begin{equation*}
\mathcal{E}_{i,i}=\frac{(\lambda+2\mu)\pi}{\kappa}|\log\varepsilon|+\mathcal{C}_{i,i}+o(1),\quad i=3,9,
\end{equation*}
\begin{equation*}
\mathcal{E}_{i,j}=-\frac{(\lambda+2\mu)\pi}{\kappa}|\log\varepsilon|+\mathcal{C}_{i,j}+o(1),\quad (i,j)=(3,9), (9,3),
\end{equation*}
where $\mathcal{C}_{i,i}$ and $\mathcal{C}_{i,j}$ are constants independent of $\varepsilon$;
\item for $i=1,\dots,6$, 
\begin{equation}\label{sym-Ei6}
\mathcal{E}_{i,i+6}=\mathcal{E}_{i+6,i},
\end{equation}
for $i=1,2$,
\begin{align}\label{sym-Ei}
\begin{split}
\mathcal{E}_{i,i+4}=\mathcal{E}_{i+4,i},\quad\mathcal{E}_{i+6,i+10}=\mathcal{E}_{i+10,i+6},\\
\mathcal{E}_{i,i+10}=\mathcal{E}_{i+10,i},\quad\mathcal{E}_{i+4,i+6}=\mathcal{E}_{i+6,i+4},
\end{split}
\end{align}
and for $i=4,5,6,10,11,12$, 
$$\frac{1}{C}\leq \mathcal{E}_{i,i}\leq C;$$
\item under Assumption \ref{asu:asd}, for $i=1,5,7,11$, $j=2,3,4,6,8,9,10,12$, and $i=2,6,8,12$, $j=1,3,4,5,7,9,10,11$, and $i=3,9$, $j=1,2,4,5,6,7,8,10,11,12$, and $i=4,10$, $j=1,2,3,5,6,7,8,9,11,12$,  
$$\mathcal{E}_{i,j}=0.$$
Moreover, we have 
\begin{align*}
\mathcal{E}_{i,i}=\mathcal{E}_{i+6,i+6},\quad i=1,\dots,6,\\
\mathcal{E}_{i,i+4}=-\mathcal{E}_{i+6,i+10},\quad\mathcal{E}_{i,i+10}=-\mathcal{E}_{i+6,i+4},\quad i=1,2;
\end{align*}
\item under Assumption \ref{asu:asd}, we have for $i=1,\dots,6$,
\begin{equation}\label{sum-Eii}
0\leq\mathcal{E}_{i,i}\pm\mathcal{E}_{i,i+6}\leq C,\quad 0\leq\mathcal{E}_{i+6,i+6}\pm\mathcal{E}_{i+6,i}\leq C.
\end{equation}
\end{enumerate}
\end{prop}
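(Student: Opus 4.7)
The proposition bundles together an energy estimate with singular asymptotics (part (i)), symmetry of a bilinear form (part (ii)), vanishing by parity (part (iii)), and boundedness of certain combinations (part (iv)). My plan is to treat each item in a way that exploits the variational characterization $\mathcal{E}_{i,j}=\int_{D^e}(\mathbb{C}e(\bw_i),e(\bw_j))$ together with the fact that $\bw_i$ is the unique minimizer of the elastic energy over functions with prescribed trace $\bxi_i$ on $\partial D$ and decaying at infinity.

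Parts (ii) and (iv) will be the cheapest. The symmetries $\mathcal{E}_{i,j}=\mathcal{E}_{j,i}$ in \eqref{sym-Ei6}--\eqref{sym-Ei} are immediate from the symmetry of the elasticity tensor $\mathbb{C}$. The bound $1/C\le\mathcal{E}_{i,i}\le C$ for $i\in\{4,5,6,10,11,12\}$ follows because the rigid motions $\varkappa_4,\varkappa_5,\varkappa_6$ vanish at the contact point between $D_1$ and $D_2$, so a suitable lift localised in $\Omega_{R_0}$ contributes only $\int\frac{|\bx'|^2}{\delta(\bx')}\,d\bx'$, which stays $\mathcal{O}(1)$. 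For (iv), the key observation is that $\bw_i+\bw_{i+6}$ has boundary value $\varkappa_i$ on the whole of $\partial D$, which is a \emph{global} rigid motion that produces no relative displacement across the gap; a smooth cut-off of $\varkappa_i$ in a neighbourhood of $D$ is therefore an admissible competitor with energy $\mathcal{O}(1)$, so $\mathcal{E}_{i,i}+2\mathcal{E}_{i,i+6}+\mathcal{E}_{i+6,i+6}\le C$. Combining this with $\mathcal{E}_{i,i}=\mathcal{E}_{i+6,i+6}$ from (iii) and Cauchy--Schwarz ($|\mathcal{E}_{i,i+6}|\le\mathcal{E}_{i,i}$) yields the required bound \eqref{sum-Eii} for the cancelling sign.

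Part (iii) is a pure symmetry argument. For each of the three reflections $T$ in Assumption \ref{asu:asd}, which preserves $D^e$ and, in fact, swaps $D_1\leftrightarrow D_2$, I would compute how $T\varkappa_k(T^{-1}\bx)$ equals $\pm\varkappa_{k'}$ for each $k$. Uniqueness of the exterior Dirichlet problem together with isotropy of $\mathbb{C}$ then converts each such symmetry into a relation $\mathcal{E}_{i,j}=\pm\,\mathcal{E}_{i',j'}$. Running through the three reflections, one obtains, for every pair listed, two relations with opposite signs, forcing vanishing; the identities $\mathcal{E}_{i,i}=\mathcal{E}_{i+6,i+6}$ and $\mathcal{E}_{i,i+4}=-\mathcal{E}_{i+6,i+10}$ come out of the same bookkeeping.

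The hard part is (i), the precise $(\mu\pi/\kappa)|\log\varepsilon|$ and $((\lambda+2\mu)\pi/\kappa)|\log\varepsilon|$ leading terms with $\mathcal{O}(1)$ remainder. My plan is the standard narrow-region energy method of Bao--Li--Yin, adapted to elasticity as in the references \cite{BLL2015,BLL2017,lz2020}. Concretely: inside the cusp region $\Omega_{R_0}$ described by \eqref{h1h2}--\eqref{delta_x'}, build an explicit test function $\bar\bw_i$ that linearly interpolates the trace $\bxi_i$ across the gap in the $x_3$-direction (and extend by $\bxi_i$ off the gap using a cut-off). For tangential translations $\varkappa_1,\varkappa_2$ the dominant contribution comes from the shear strain $\partial_{x_3}\bar\bw_i\sim 1/\delta(\bx')$, producing $\mu\int_{|\bx'|<R_0}\frac{d\bx'}{\delta(\bx')}=\frac{\mu\pi}{\kappa}|\log\varepsilon|+\mathcal{O}(1)$ via \eqref{delta_x'}; for the normal translation $\varkappa_3$ the same integral appears but with coefficient $\lambda+2\mu$ from the normal strain $e_{33}$. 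This gives the upper bound and identifies the constant. The matching lower bound follows from the trace inequality and an iteration/Korn-type argument showing that any function with the prescribed trace must inherit the same $1/\delta$ growth in the gap, with error $o(1)$ once one subtracts the explicit singular profile; this is precisely where the bulk of the technical work sits, and I would defer the full verification to the appendix, as the authors already signal.
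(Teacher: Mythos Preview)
Your treatment of parts (ii), (iii), and (iv) is essentially the paper's approach: symmetry of $\mathbb{C}$ for the bilinear identities, reflection symmetries plus uniqueness of the exterior Dirichlet problem for the vanishing relations, and the observation that $\bw_i+\bw_{i+6}$ carries a global rigid trace to bound the ``$+$'' combination. Two small omissions: for $\tfrac{1}{C}\le\mathcal{E}_{i,i}$ in (ii) you only sketch the upper bound; the paper obtains the lower bound by restricting the energy to an annulus $\Omega_{R_0}\setminus\Omega_{R_0/2}$ where Korn's inequality and the explicit profile of $\bv_i$ give a strictly positive contribution. And in (iv) you only explain the ``$+$'' sign; the paper handles ``$-$'' by the analogous identity for $\bw_i-\bw_{i+6}$.

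The genuine gap is in part (i). Your upper-bound/lower-bound scheme will produce $\mathcal{E}_{1,1}=\frac{\mu\pi}{\kappa}|\log\varepsilon|+\mathcal{O}(1)$, but the statement demands the sharper form $\frac{\mu\pi}{\kappa}|\log\varepsilon|+\mathcal{C}_{1,1}+o(1)$ with a \emph{specific, $\varepsilon$-independent} constant $\mathcal{C}_{1,1}$. This constant actually enters the resonant frequencies downstream (e.g.\ $\omega_2$ in Theorem~\ref{thm:renfre} is built from $\mathcal{C}_{3,3}+\mathcal{C}_{3,9}$), so its existence is not cosmetic. A test-function upper bound and a trace/Korn lower bound, even if both equal $\frac{\mu\pi}{\kappa}|\log\varepsilon|$ at leading order, do not force the $\mathcal{O}(1)$ remainders to converge. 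The paper's mechanism for this is a step you do not mention: introduce the touching ($\varepsilon=0$) exterior problem for $\bw_i^0$, prove $|\bw_i-\bw_i^0|\le C\varepsilon^{1/2}$ away from a shrinking cylinder (Lemma~\ref{lem-differ}), bootstrap to a gradient bound by interpolation, and thereby show that the bounded pieces of the energy converge to the corresponding $\varepsilon=0$ quantities. The constant $\mathcal{C}_{i,i}$ is then identified as an energy of the limiting problem. Relatedly, the paper's auxiliary functions in the gap carry a secondary correction term (the $f(\bar v)\partial_{x_k}\delta\cdot\varkappa_3$ piece in \eqref{auxiliary improved}) needed to make $\nabla\bw_i-\nabla\bv_i=\mathcal{O}(1)$ hold pointwise; a naive linear interpolant in $x_3$ is not quite enough for that estimate in the elastic setting.
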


The proof is inspired by the main idea in \cite{BLL2017,LX} and is given in Appendix \ref{Appendix}. 

\begin{rem}
Our arguments in Appendix \ref{Appendix} work well for $C^{2,\alpha}$ resonators with $m$-convex boundaries, that is, the assumption \eqref{convexity} becomes 
\begin{align*}
h_{i}(\bx')=
(-1)^{i+1}\frac{\kappa}{2}|\bx'|^{m}+O(|\bx'|^{m+1}),
\quad i=1,2,~m>2.
\end{align*}
See, for instance, \cite{LX}. We leave the details to the interested reader.
\end{rem}

\section{Resonant frequencies and resonant modes}\label{sec:resonant}

In this section, we study the resonant phenomenon of the system \eqref{eq:or2} including deriving the  resonant frequencies and the corresponding eigenmodes. To that end,
we need first investigate the properties of the matrix $\Bcal$ defined by
\begin{equation}\label{eq:defb}
    \Bcal_{i,j}=\int_D  \bS_{ D} [\bzeta_i]\bxi_j,
\end{equation}
where $\bxi_j$ and $\bzeta_i$ are given in \eqref{def-xi} and Lemma \ref{lem:kekske}, respectively. 
\begin{prop}\label{prop:exob}
    The matrix $\Bcal$ defined in \eqref{eq:defb} has the following properties:
    \begin{enumerate}[(i)]
        \item the matrix $\Bcal$ is symmetric, i.e. $\Bcal_{i,j}=\Bcal_{j,i}$;
        \item some of the elements of the matrix $\Bcal$ satisfy 
        \[
        \Bcal_{i,j} =
\begin{cases}
|D_1|,\quad  &i=j=1,2,3,\\
\int_{D_1} |\varkappa_i|^2, \quad  & i=j=4,5,6, \\
|D_2|,\quad & i=j=7,8,9,\\
\int_{D_2} |\varkappa_{i-6}|^2, \quad & i=j=10, 11, 12 \\
0,\quad & i=1,\dots,6, j=7,\dots,12, \\
0,\quad & i=7,\dots,12, j=1,\dots,6, \\
\int_{D_1} x_3, \quad \mbox{for pairs} \;\; \{i,j\} & \{1,5\}, \{2,6\}, \{5,1\}, \{6,2\},  \\
\int_{D_2} x_3, \quad \mbox{for pairs} \;\; \{i,j\}  & \{7,11\}, \{8,12\}, \{11,7\}, \{12,8\},  \\
0, \quad \mbox{for pairs} \;\; \{i,j\}  & \{1,2\}, \{1,3\}, \{2,3\}, \{7,8\}, \{7,9\}, \{8,9\},  \\
 0, \quad \mbox{for pairs} \;\; \{i,j\}  & \{2,1\}, \{3,1\}, \{3,2\}, \{8,7\}, \{9,7\}, \{9,8\},  \\
\end{cases}
        \]
         where $\varkappa_i$ is given in \eqref{eq:dpsi};
    \item under Assumption \ref{asu:asd}, the other elements of the matrix $\Bcal$ vanish. 
    \end{enumerate}
   
\end{prop}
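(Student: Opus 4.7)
The central reduction is to identify $\bS_D[\bzeta_i]$, restricted to $\overline{D}$, with a simple piecewise extension of $\bxi_i$. By Lemma \ref{lem:kekske} we have $\bS_D[\bzeta_i]=\bxi_i$ on $\partial D$, and $\bS_D[\bzeta_i]$ automatically satisfies $\Lcal_{\lambda,\mu}\bu=0$ on each component of $D$. Each $\varkappa_k$ in \eqref{eq:dpsi} is either a constant vector or an infinitesimal rigid rotation, so $\Lcal_{\lambda,\mu}\varkappa_k\equiv 0$ in $\mathbb{R}^3$. Hence the extension taking $\varkappa_i$ on $D_1$ and $0$ on $D_2$ (for $1\le i\le 6$, swapped for $i\ge 7$) is an interior solution matching the boundary data $\bxi_i$, and uniqueness of the Lam\'e Dirichlet problem on each of $D_1$ and $D_2$ gives $\bS_D[\bzeta_i]|_D=\bxi_i$. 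Consequently
\begin{equation*}
\Bcal_{i,j}=\int_D\bxi_i(\bx)\cdot\bxi_j(\bx)\,d\bx,
\end{equation*}
and the symmetry claim (i) is immediate from the symmetry of the Euclidean inner product.

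For (ii), I would enumerate the cases directly from \eqref{def-xi} and \eqref{eq:dpsi}. Since $\bxi_i$ is supported in $D_1$ for $i\le 6$ and in $D_2$ for $i\ge 7$, every cross entry with indices split between the two halves vanishes by disjointness of $D_1$ and $D_2$. The products $\varkappa_k\cdot\varkappa_l$ with $k,l\in\{1,2,3\}$ equal $\delta_{k,l}$, yielding $|D_\ell|$ on the diagonal and $0$ off-diagonal; the rotational diagonals produce $\int_{D_\ell}|\varkappa_k|^2$. Among the translation--rotation cross terms, only $\varkappa_1\cdot\varkappa_5$ and $\varkappa_2\cdot\varkappa_6$ are non-zero pointwise, both equal to $x_3$, which explains the $\int_{D_\ell}x_3$ entries for the pairs $\{1,5\}$, $\{2,6\}$, $\{7,11\}$, $\{8,12\}$.

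For (iii), all remaining entries reduce, via the same pointwise inner product computation, to integrals over $D_1$ or $D_2$ of one of the monomials $x_1$, $x_2$, $x_1x_2$, $x_1x_3$, or $x_2x_3$. From \eqref{h1h2}, the reflection in $\{x_3=0\}$ swaps $D_1$ and $D_2$; combining this reflection pairwise with the two axial symmetries in Assumption \ref{asu:asd} produces the symmetries $(x_1,x_2,x_3)\mapsto(-x_1,x_2,x_3)$ and $(x_1,x_2,x_3)\mapsto(x_1,-x_2,x_3)$, each of which preserves $D_1$ and $D_2$ individually. Each of the listed monomials is odd under at least one of these two symmetries, so its integral over $D_\ell$ vanishes, completing (iii). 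The main conceptual obstacle is Step~1---recognising via interior Dirichlet uniqueness that the volume integral collapses to $\int_D\bxi_i\cdot\bxi_j$ rather than trying to evaluate $\bS_D[\bzeta_i]$ directly; once that reduction is in place, the remainder is a finite bookkeeping exercise in which Assumption \ref{asu:asd} enters only to kill the first- and second-order moments of $D_\ell$.
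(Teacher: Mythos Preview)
Your proof is correct and follows essentially the same route as the paper: reduce $\Bcal_{i,j}$ to $\int_D\bxi_i\cdot\bxi_j$ by identifying $\bS_D[\bzeta_i]|_D$ with the rigid-motion extension of $\bxi_i$, then compute the entries directly for (ii) and invoke the reflection symmetries of $D_1$, $D_2$ for (iii). Your justification of the reduction via Dirichlet uniqueness and your argument that the composed reflections preserve each component are in fact more explicit than the paper, which simply asserts $\int_D\bS_D[\bzeta_i]\bxi_j=\int_D\bxi_i\bxi_j$ and that ``$D_1$ is symmetric with respect to the plane $x_2=0$'' from Assumption~\ref{asu:asd}(1),(3).
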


\begin{proof}
    From the choice of $\bzeta_i$ in Lemma \ref{lem:kekske} and the definition of $\Bcal_{i,j}$ in \eqref{eq:defb}, we have that
    \begin{equation}\label{Bi-j}
    \Bcal_{i,j} = \int_D  \bS_{ D} [\bzeta_i]\bxi_j=\int_D  \bxi_i\bxi_j,
    \end{equation}
    which proves the statement $(i)$. 
    
    The values of the matrix $\Bcal$ in $(ii)$ follow from the direct calculations with the help of the expressions of $\bxi_i$ in \eqref{def-xi} and $\Bcal_{i,j}$ in \eqref{Bi-j}. 

    Next, we prove the third statement. 
    A direct calculation shows that 
    \[
    \Bcal_{1,4} =  \int_{D_1} x_2.
    \]
    From $(1)$ and $(3)$ in Assumption \ref{asu:asd}, one can conclude the domain $D_1$ is symmetric with respect to the plane $x_2=0$. Thus one has that 
    \[
    \int_{D_1} x_2 = \int_{D_1} -x_2,
    \]
   which shows that $\Bcal_{1,4}=0$. The other elements of the matrix $\Bcal$ can be proved following the same discussion. This completes the proof. 
\end{proof}

Now, we are in a position to derive the resonant frequencies and the corresponding eigenvectors for the system \eqref{eq:or2}. To clearly state the results, we first define some functions by 
\begin{equation}\label{eq:bvbr}
\begin{split}
    \bvarphi^{(1)} = \left( \bzeta_3, \bzeta_9 \right)^t, & \qquad \bvarphi^{(3)} = \left( \bzeta_1, \bzeta_{5}, \bzeta_7, \bzeta_{11} \right)^t, \\
    \bvarphi^{(2)} = \left( \bzeta_4, \bzeta_{10} \right)^t, & \qquad \bvarphi^{(4)} = \left( \bzeta_2, \bzeta_{6}, \bzeta_8, \bzeta_{12} \right)^t.
\end{split}
\end{equation}

\begin{thm}\label{thm:renfre}
    Under Assumption \ref{asu:asd} for the structure $D$, consider the system \eqref{eq:or2} with parameters given in \eqref{eq:painma} and \eqref{eq:conpara}. 
    Then the system has $12$ subwavelength resonant frequencies. The first two resonant frequencies are
    \[
    \omega_1 = \sqrt{\frac{2(\lambda + 2\mu)\pi}{\kappa\rho |D_1|}} \sqrt{|\log\varepsilon|\eta} \left(1 + o(1) \right), \quad  \omega_2 = \sqrt{\frac{(\Ccal_{3,3} + \Ccal_{3,9})\eta}{\rho|D_1|}}  \left(1 + o(1) \right).
    \]
    The corresponding eigenvectors are 
    \begin{equation}\label{phi12}
      \bvarphi_1 = \left( \bbe^{(1)}_1 \right)^t \bvarphi^{(1)} \quad \mbox{and} \quad  \bvarphi_2 = \left( \bbe^{(1)}_2 \right)^t \bvarphi^{(1)},
    \end{equation}
    where $\bbe^{(1)}_i$ with $i=1,2$ are given in \eqref{eigenvector1} and \eqref{eigenvector2}, respectively. 
    The next two resonant frequencies are given by 
    \[
    \omega_3 = \sqrt{\frac{(\Ecal_{4,4} - \Ecal_{4,10})\eta }{\rho \Bcal_{4,4}}}\left(1 + o(1) \right), \quad  \omega_4 = \sqrt{\frac{(\Ecal_{4,4} + \Ecal_{4,10})\eta }{\rho \Bcal_{4,4}}} \left(1 + o(1) \right).
    \]
    The corresponding eigenvectors are 
    \begin{equation}\label{phi34}
      \bvarphi_3 = \left( \bbe^{(2)}_1 \right)^t \bvarphi^{(2)} \quad \mbox{and} \quad  \bvarphi_4 = \left( \bbe^{(2)}_2 \right)^t \bvarphi^{(2)},
    \end{equation}
    where $\bbe^{(2)}_i$ with $i=1,2$ are given in \eqref{eigenvector21} and \eqref{eigenvector22}, respectively. 
    The next four resonant frequencies are given by 
    \[
    \omega_5 =  \sqrt{\frac{(\Ecal_{5,5} + \Ecal_{5,11})\eta }{\rho \Bcal_{5,5}}} \left(1 + o(1) \right), \quad  \omega_6 = \sqrt{\frac{2\mu\pi}{\kappa\rho}} \sqrt{   \frac{\Bcal_{5,5} }{  \Bcal_{1,1}\Bcal_{5,5} -\Bcal_{1,5}^2 }} \sqrt{|\log\varepsilon|\eta} \left(1 + o(1) \right),
    \]
    \[
    \omega_7 = \sqrt{\frac{\eta}{\rho} d_1} (1+o(1)), \qquad\qquad  \omega_8 = \sqrt{\frac{\eta}{\rho} d_2} (1+o(1)), 
    \]
    where $d_1$ and $d_2$ are of order $\Ocal(1)$, and defined in \eqref{eq:defd1} and \eqref{eq:defd2}, respectively. 
    The corresponding eigenvectors are 
    \begin{equation}\label{phi5678}
      \bvarphi_5 = \left( \bbe^{(3)}_1 \right)^t \bvarphi^{(3)}, \;\;  \bvarphi_6 = \left( \bbe^{(3)}_2 \right)^t \bvarphi^{(3)}, \;\;  \bvarphi_7 = \left( \bbe^{(3)}_3 \right)^t \bvarphi^{(3)}, \;\;  \bvarphi_8 = \left( \bbe^{(3)}_4 \right)^t \bvarphi^{(3)}, \; 
    \end{equation}
    where $\bbe^{(3)}_i$ with $i=1,2,3,4$ are given in \eqref{eigenvector31}, \eqref{eq:bbe32}, \eqref{eq:bbe33} and \eqref{eq:bbe34}, respectively. 
    The last four resonant frequencies are given by 
    \[
    \omega_9 =  \sqrt{\frac{(\Ecal_{6,6} + \Ecal_{6,12})\eta }{\rho \Bcal_{6,6}}} \left(1 + o(1) \right), \quad  \omega_{10} = \sqrt{\frac{2\mu\pi}{\kappa\rho}} \sqrt{   \frac{\Bcal_{6,6} }{  \Bcal_{2,2}\Bcal_{6,6} -\Bcal_{2,6}^2 }} \sqrt{|\log\varepsilon|\eta} \left(1 + o(1) \right),
    \]
    \[
    \omega_{11} = \sqrt{\frac{\eta}{\rho} d_3} (1+o(1)), \qquad\qquad  \omega_{12} = \sqrt{\frac{\eta}{\rho} d_4} (1+o(1)), 
    \]
    where $d_3$ and $d_4$ are of order $\Ocal(1)$, and defined in \eqref{eq:defd3} and \eqref{eq:defd4}, respectively. 
    The corresponding eigenvectors are 
    \begin{equation}\label{phi10}
      \bvarphi_9 = \left( \bbe^{(4)}_1 \right)^t \bvarphi^{(4)}, \;\;  \bvarphi_{10} = \left( \bbe^{(4)}_2 \right)^t \bvarphi^{(4)}, \;\;  \bvarphi_{11} = \left( \bbe^{(4)}_3 \right)^t \bvarphi^{(4)}, \;\;  \bvarphi_{12} = \left( \bbe^{(4)}_4 \right)^t \bvarphi^{(4)}, \; 
    \end{equation}
    where $\bbe^{(4)}_i$ with $i=1,2,3,4$ are given in \eqref{bbe114}, \eqref{bbe224}, \eqref{eq:bbe43} and \eqref{eq:bbe44}, respectively. 

    Here, the functions $\bvarphi^{(i)}$ with $1\leq i \leq 4$ are defined in \eqref{eq:bvbr}.  
    With the index $1\leq i,j\leq 12$, the parameters $\Bcal_{i,j}$ are given in \eqref{eq:defb}, and the parameters $\Ecal_{i,j}$ and $\Ccal_{i,j}$ are given in Proposition \ref{prop-asymp}. The eigenvectors are normalized of the order $\Ocal(1)$ on the boundary $\partial D$, i.e. $\norm{\bvarphi_i}_{L^2(\partial D)}=\Ocal(1)$ with $1\leq i\leq 12$.  
\end{thm}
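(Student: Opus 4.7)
The plan is to analyze $\Acal(\omega,\delta)[\Phi]=0$ by perturbing around $(\omega,\delta)=(0,0)$. At the unperturbed point, the first row of $\Acal$ together with the invertibility of $\bS_D$ (Lemma \ref{lem:insin}) forces $\bvarphi=\bpsi$, while the second row reduces to $(-\bI/2+\bK_D^*)[\bvarphi]=0$; by Lemma \ref{lem:kekske} the kernel is the $12$-dimensional space $\mathrm{span}\{\bzeta_1,\dots,\bzeta_{12}\}$. Hence the $12$ subwavelength resonances are small perturbations of this kernel, and I would parametrize $\bvarphi=\sum_{i=1}^{12}c_i\bzeta_i + (\text{higher-order corrections})$, and similarly $\bpsi$.

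First I would substitute the expansions \eqref{eq:sise}--\eqref{eq:npse} into $\Acal(\omega,\delta)[\Phi]=0$, solve the first equation for $\bpsi$ in terms of $\bvarphi$ to the needed order, and substitute into the second equation. Pairing with the test functions $\bxi_j$, which annihilate $-\bI/2+\bK_D$ by Lemma \ref{lem:kek}, the zeroth order vanishes by duality; two key identities then produce the entries of the $12\times 12$ projection matrix. Green's identity in $D$, using $\Lcal_{\lambda,\mu}\bxi_j=0$ and $\partial_{\bnu}\bxi_j=0$, gives
\begin{equation*}
\int_{\partial D}(-\tfrac{1}{2}\bI+\bK_D^{\tau\omega,*})[\bzeta_i]\cdot\bxi_j\,ds=-\rho\tau^2\omega^2\,\Bcal_{i,j}+O(\omega^3),
\end{equation*}
while the identity $(\bI/2+\bK_D^*)[\bzeta_i]=\bzeta_i$ (from Lemma \ref{lem:kekske}), combined with \eqref{int-zeta}--\eqref{def-Eij}, yields
\begin{equation*}
\int_{\partial D}(\tfrac{1}{2}\bI+\bK_D^{\omega,*})[\bzeta_i]\cdot\bxi_j\,ds=-\Ecal_{i,j}+O(\omega).
\end{equation*}
Dividing by $\delta$ and using $\tau^2\omega^2/\delta=\omega^2/\eta$, I arrive at the generalized eigenvalue problem $\bigl(\Ecal-\tfrac{\rho\omega^2}{\eta}\Bcal\bigr)\mathbf{c}=o(1)$ at leading order.

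Next I would exploit Assumption \ref{asu:asd} to block-diagonalize. By Propositions \ref{prop:exob}(iii) and \ref{prop-asymp}(iii), both $\Ecal$ and $\Bcal$ decouple into four invariant subspaces spanned by $\{\bzeta_3,\bzeta_9\}$, $\{\bzeta_4,\bzeta_{10}\}$, $\{\bzeta_1,\bzeta_5,\bzeta_7,\bzeta_{11}\}$, and $\{\bzeta_2,\bzeta_6,\bzeta_8,\bzeta_{12}\}$. The two $2\times 2$ blocks are solved immediately by taking the combinations $\bzeta_i\pm\bzeta_{i+6}$: in the $\{3,9\}$ block, the $|\log\varepsilon|$ singularities of Proposition \ref{prop-asymp}(i) reinforce in the antisymmetric combination to produce $\omega_1\sim\sqrt{|\log\varepsilon|\eta}$ and cancel in the symmetric one to produce the bounded $\omega_2\sim\sqrt{\eta}$; in the $\{4,10\}$ block all entries are bounded, yielding $\omega_3,\omega_4\sim\sqrt{\eta}$. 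For each $4\times 4$ block, the same involution $\bzeta_i\leftrightarrow\bzeta_{i+6}$, together with the sign identities in Proposition \ref{prop-asymp}(iii), further splits it into two decoupled $2\times 2$ generalized eigenvalue problems, whose eigenvectors supply the coefficients $\bbe^{(3)}_i,\bbe^{(4)}_i$ in \eqref{phi5678} and \eqref{phi10}.

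The main obstacle lies in the $4\times 4$ blocks, specifically in the sub-block that contains the singular entry $\Ecal_{1,1}-\Ecal_{1,7}\sim\tfrac{2\mu\pi}{\kappa}|\log\varepsilon|$. Here the $\Bcal$-sub-block is non-diagonal because $\Bcal_{1,5}=\int_{D_1}x_3\neq 0$, and the $\Ecal$-sub-block mixes a logarithmically blowing entry with $\mathcal{O}(1)$ entries. The characteristic polynomial has coefficients of different orders in $|\log\varepsilon|$, so a careful two-scale analysis is needed: balancing the $|\log\varepsilon|$-terms against the $\mathcal{O}(1)$-terms separates a bounded eigenvalue producing $\omega_5\sim\sqrt{\eta}$ from a divergent one producing $\omega_6\sim\sqrt{|\log\varepsilon|\eta}$ with constant $\sqrt{2\mu\pi/\kappa}\cdot\sqrt{\Bcal_{5,5}/(\Bcal_{1,1}\Bcal_{5,5}-\Bcal_{1,5}^2)}$. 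The companion sub-block, having only bounded entries, yields the two $\sqrt{\eta}$-scale frequencies $\omega_7,\omega_8$ with the constants $d_1,d_2$ of \eqref{eq:defd1}--\eqref{eq:defd2}. The same reduction applied to the $\{\bzeta_2,\bzeta_6,\bzeta_8,\bzeta_{12}\}$ block produces $\omega_9,\dots,\omega_{12}$, and the normalization $\|\bvarphi_i\|_{L^2(\partial D)}=\Ocal(1)$ fixes the overall scaling of the eigenvectors.
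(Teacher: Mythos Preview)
Your proposal is correct and follows essentially the same approach as the paper: asymptotically expand $\Acal(\omega,\delta)[\Phi]=0$, eliminate $\bpsi$ via the first row and Lemma~\ref{lem:insin}, project the second row against $\bxi_j$ using Lemma~\ref{lem:kek} and the identity $\int_{\partial D}\bK_{D,2}^*[\bpsi]\cdot\bxi_j=-\rho\int_D\bS_D[\bpsi]\cdot\bxi_j$ (which is exactly your Green's-identity computation), arrive at the $12\times 12$ generalized eigenvalue problem $(\Ecal-\tfrac{\rho\omega^2}{\eta}\Bcal)\mathbf{c}=0$, and then block-diagonalize via Propositions~\ref{prop-asymp} and~\ref{prop:exob}. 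The paper carries out the $4\times 4$ blocks by writing down the matrices $\Jcal^{(3)},\Jcal^{(4)}$ explicitly and applying the quadratic formula to the two resulting $2\times 2$ factors, which is exactly the decoupling you attribute to the involution with sign changes; your two-scale heuristic for separating the $|\log\varepsilon|$-dominated eigenvalue from the bounded one in each sub-block matches the paper's explicit computation.
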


\begin{proof}
Consider the system \eqref{eq:conre1}, i.e. 
\[
\Acal(\omega,\delta)[\Phi](\bx)=0.
\]
From asymptotic expansions of  operators $\bS_{ D}^{\omega} $ in \eqref{eq:sise} and $\bK_{ D}^{\omega, *}$ in \eqref{eq:npse} with $\omega\ll 1$ and $\delta\ll 1$, the above equation can be written as
\begin{align}\label{eq:exa1}
	\bS_{ D}[\bvarphi - \bpsi] + \tau\omega \bS_{D,1}[\bvarphi] - \omega \bS_{D,1}[\bpsi]   &= \Ocal\left(\omega^2 \right), \\ 
	\label{eq:exa2}
    \left( -\frac{\bI}{2} +  {\bK}_{ D}^{ *} + (\tau\omega)^2 {\bK}_{ D,2}^{ *} \right)[\bvarphi] - \delta \left( \frac{\bI}{2} +  {\bK}_{ D}^{ *}  \right)[\bpsi]  &=\Ocal\left((\tau\omega)^3 + \delta\omega^2\right). 
\end{align}
Comparing the order on both sides of the equation \eqref{eq:exa2}, one has that the leading order of the density function $\bvarphi$  belongs to the kernel of the operator $-\bI/2 +  {\bK}_{ D}^{ *}$, i.e. 
\begin{equation}\label{eq:probva1}
    \bvarphi - \Ocal((\tau\omega)^2) \in \ker (-\bI/2 +  {\bK}_{ D}^{ *} ).
\end{equation}
With the help of the equation \eqref{eq:exa1} and Lemma \ref{lem:insin}, the following holds
\begin{equation}\label{eq:revp1}
	\bvarphi=\bpsi + \Ocal(\omega).
\end{equation}
Substituting the equation \eqref{eq:revp1} into the equation \eqref{eq:exa2} yields that 
\begin{equation}\label{eq:expan1}
	 \left( -\frac{\bI}{2} +  {\bK}_{ D}^{ *}  \right)[\bvarphi] + (\tau\omega)^2 {\bK}_{ D,2}^{ *} [\bpsi] - \delta \left( \frac{\bI}{2} +  {\bK}_{ D}^{ *}  \right)[\bpsi] = \Ocal\left((\tau\omega)^2\omega + \delta\omega^2 \right).
\end{equation}
Then multiplying $\bxi_i$ on both sides of the equation \eqref{eq:expan1} and integrating on the boundary $\partial D$ give that 
\begin{equation}\label{eq:deref1}
	 -(\tau\omega)^2 \rho \int_D  \bS_{ D} [\bpsi]\bxi_j - \delta\int_{\partial D} \bpsi\bxi_j = \Ocal\left((\tau\omega)^2\omega + \delta\omega^2 \right).
\end{equation}
In the derivation of the last equation, we have used the following identities
\[
\int_{\partial D}  \left( -\frac{\bI}{2} +  {\bK}_{ D}^{ *}  \right)[\bvarphi] \bxi_j =0,
\]
and 
\[
\int_{\partial D} {\bK}_{ D,2}^{ *} [\bpsi] \bxi_j= -\rho \int_D  \bS_{ D} [\bpsi]\bxi_j. 
\]
Thus, to study the resonant frequencies and the corresponding eigenvectors, we just need to solve the equation \eqref{eq:deref1}. From the expressions of the equation \eqref{eq:deref1}, we should first study the properties of the terms $\int_{\partial D} \bpsi\bxi_j$ and $\int_D  \bS_{ D} [\bpsi]\bxi_j$. 
From Lemma \ref{lem:kekske} and the relationships \eqref{eq:probva1} as well as \eqref{eq:revp1}, the density function $\bpsi$ can be written as
\begin{equation}\label{eq:expsi}
    \bpsi=\sum_{i=1}^{12} a_i \bzeta_i + \Ocal(\omega),
\end{equation}
where the coefficients $a_i$ need be determined. To that end, we next study the quantitative behavior of the matrices composed of entries $\int_D  \bS_{ D} [\bzeta_i]\bxi_j$ and $\int_{\partial D} \bzeta_i\bxi_j$.
Indeed, the terms $\int_D  \bS_{ D} [\bzeta_i]\bxi_j$ are the elements of the matrix $\Bcal$ defined in \eqref{eq:defb}. Their corresponding expressions are given in Proposition \ref{prop:exob}. It follows from \eqref{int-zeta} and \eqref{def-Eij} that the corresponding properties of $\int_{\partial D} \bzeta_i\bxi_j$ are given in Proposition \ref{prop-asymp}.

Now we are in a position to analyze the resonant frequencies of the system \eqref{eq:conre1}. From the equation \eqref{eq:deref1}, the resonant frequency $\omega$ satisfies the following equation up to an error of the order $\Ocal\left((\tau\omega)^2\omega + \delta\omega^2 \right)$:
\begin{equation}\label{eq:resys1}
    \left( \Ecal - \frac{(\tau\omega)^2 \rho}{\delta} \Bcal \right)\ba =0.
\end{equation}
In the last equation, $\ba=\{a_i\}_{i=1}^{12}$ is a vector with $a_i$ given in \eqref{eq:expsi}.
From Propositions \ref{prop:exob} and \ref{prop-asymp}, the system \eqref{eq:resys1} can be decomposed into the following four subsystems, 
\begin{equation}\label{eq:subsys1}
     \Jcal^{(1)} \ba^{(1)}=0, \quad  \Jcal^{(2)} \ba^{(2)}=0, \quad  \Jcal^{(3)} \ba^{(3)}=0, \quad  \Jcal^{(4)} \ba^{(4)}=0, \quad 
\end{equation}
where $ \Jcal^{(1)}$ and $ \Jcal^{(2)}$ are two $2\times 2$ matrices, and $ \Jcal^{(3)}$ and $ \Jcal^{(4)}$ are two $4\times 4$ matrices. 
Next, we consider the subsystem in \eqref{eq:subsys1} one by one. 

By Assumption \ref{asu:asd}, the matrix $ \Jcal^{(1)}$ has the following expression
\[
  \Jcal^{(1)}_{1,1} =  \Jcal^{(1)}_{2,2} =\Ecal_{3,3} - \frac{(\tau\omega)^2 \rho}{\delta} \Bcal_{3,3}, \qquad  \Jcal^{(1)}_{1,2} =  \Jcal^{(1)}_{2,1} =\Ecal_{3,9}, 
\]
and $\ba^{(1)}$ has the expression $\ba^{(1)} = (a_3, a_9)^t$. Hence, the first eigenvalue is given by 
\[
\gamma^{(1)}_1 =  \Jcal^{(1)}_{1,1} -  \Jcal^{(1)}_{1,2}, 
\]
which means that the first resonant frequency is
\[
 \omega^{(1)}_1 = \sqrt{\frac{2(\lambda + 2\mu)\pi}{\kappa\rho |D_1|}} \sqrt{|\log\varepsilon|\eta} \left(1 + o(1) \right).
\]
The corresponding eigenvector is $\bvarphi_1 = ( \bbe^{(1)}_1 )^t \bvarphi^{(1)}$ with
\begin{equation}\label{eigenvector1}
\bbe^{(1)}_1 = (-1, 1)^t, \qquad  \bvarphi^{(1)} = \left( \bzeta_3, \bzeta_9 \right)^t.
\end{equation}
The second eigenvalue is 
\[
\gamma^{(1)}_2 =  \Jcal^{(1)}_{1,1} +  \Jcal^{(1)}_{1,2}, 
\]
which shows that the resonant frequency is
\[
  \omega^{(1)}_2 = \sqrt{\frac{(\Ccal_{3,3} + \Ccal_{3,9})\eta}{\rho|D_1|}}  \left(1 + o(1) \right).
\]
The corresponding eigenvector is $\bvarphi_2 = ( \bbe^{(1)}_2 )^t \bvarphi^{(1)}$ with
\begin{equation}\label{eigenvector2}
 \bbe^{(1)}_2 = (1, 1)^t.
\end{equation}

Next, we deal with the second equation in \eqref{eq:subsys1}.  
The matrix $ \Jcal^{(2)}$ has the following expression
\[
  \Jcal^{(2)}_{1,1} =  \Jcal^{(2)}_{2,2} =\Ecal_{4,4} - \frac{(\tau\omega)^2 \rho}{\delta} \Bcal_{4,4}, \qquad  \Jcal^{(2)}_{1,2} =  \Jcal^{(2)}_{2,1} =\Ecal_{4,10}, 
\]
and $\ba^{(2)}$ is given by $\ba^{(2)} = (a_4, a_{10})^t$. Therefore, the first eigenvalue is 
\[
\gamma^{(2)}_1 =  \Jcal^{(2)}_{1,1} -  \Jcal^{(2)}_{1,2}, 
\]
which gives that the resonant frequency is
\[
  \omega^{(2)}_1 = \sqrt{\frac{(\Ecal_{4,4} - \Ecal_{4,10})\eta }{\rho \Bcal_{4,4}}}\left(1 + o(1) \right).
\]
The corresponding eigenvector is $\bvarphi_3 = ( \bbe^{(2)}_1 )^t \bvarphi^{(2)}$ with
\begin{equation}\label{eigenvector21}
\bvarphi^{(2)} = \left( \bzeta_4, \bzeta_{10} \right)^t, \qquad  \bbe^{(2)}_1 = (-1, 1)^t.
\end{equation}
The second eigenvalue is 
\[
\gamma^{(2)}_2 =  \Jcal^{(2)}_{1,1} +  \Jcal^{(2)}_{1,2}, 
\]
which yields the related resonant frequency 
\[
  \omega^{(2)}_2 = \sqrt{\frac{(\Ecal_{4,4} + \Ecal_{4,10})\eta }{\rho \Bcal_{4,4}}} \left(1 + o(1) \right).
\]
The corresponding eigenvector is $\bvarphi_4 = ( \bbe^{(2)}_2 )^t \bvarphi^{(2)}$ with 
\begin{equation}\label{eigenvector22}
\bbe^{(2)}_2 = (1, 1)^t.
\end{equation}

Then, we deal with the third equation in \eqref{eq:subsys1}.  
The matrix $ \Jcal^{(3)}$ has the following expressions
\[
  \Jcal^{(3)}_{1,1} =  \Jcal^{(3)}_{3,3} =\Ecal_{1,1} - \frac{(\tau\omega)^2 \rho}{\delta} \Bcal_{1,1}, \qquad   \Jcal^{(3)}_{1,3} =  \Jcal^{(3)}_{3,1} =\Ecal_{1,7} , 
\]
\[
  \Jcal^{(3)}_{2,2} =  \Jcal^{(3)}_{4,4} =\Ecal_{5,5} - \frac{(\tau\omega)^2 \rho}{\delta} \Bcal_{5,5}, \qquad   \Jcal^{(3)}_{2,4} =  \Jcal^{(3)}_{4,2} =\Ecal_{5,11} , 
\]
\[
  \Jcal^{(3)}_{1,2} =  \Jcal^{(3)}_{2,1} =- \Jcal^{(3)}_{3,4} = - \Jcal^{(3)}_{4,3} = \Ecal_{1,5} - \frac{(\tau\omega)^2 \rho}{\delta} \Bcal_{1,5},
\]
\[
  \Jcal^{(3)}_{1,4} =  \Jcal^{(3)}_{4,1} =- \Jcal^{(3)}_{2,3} = - \Jcal^{(3)}_{3,2} = \Ecal_{1,11},
\]
and $\ba^{(3)}$ has the expression $\ba^{(3)} = (a_1, a_{5}, a_7, a_{11})^t$.
Analyzing the matrix $ \Jcal^{(3)}$, one has that the first eigenvalue is 
\begin{align*}
\gamma^{(3)}_1 &= \frac{1}{2} \Bigg(     \Jcal^{(3)}_{1,1} -    \Jcal^{(3)}_{1,3} +    \Jcal^{(3)}_{2,2}+    \Jcal^{(3)}_{2,4} \\
&\qquad\qquad -\sqrt{(-    \Jcal^{(3)}_{1,1} +    \Jcal^{(3)}_{1,3}+    \Jcal^{(3)}_{2,2}+    \Jcal^{(3)}_{2,4})^2+4 ( \Jcal^{(3)}_{1,2}+ {   \Jcal^{(3)}_{1,4}})^2} \Bigg), 
\end{align*}
which gives that the related resonant frequency
\[
\omega^{(3)}_1 = \sqrt{\frac{(\Ecal_{5,5} + \Ecal_{5,11})\eta }{\rho \Bcal_{5,5}}} \left(1 + o(1) \right). 
\]
The corresponding eigenvector is $\bvarphi_5 = ( \bbe^{(3)}_1 )^t \bvarphi^{(3)}$ with 
\begin{equation}\label{eigenvector31}
	\bvarphi^{(3)} = \left( \bzeta_1, \bzeta_{5}, \bzeta_7, \bzeta_{11} \right)^t, \quad 
\bbe^{(3)}_1 = \left(\bbe^{(3)}_{1,1}, \; \bbe^{(3)}_{1,2} , \;  \bbe^{(3)}_{1,3} , \; \bbe^{(3)}_{1,4} \right)^t.
\end{equation}
In \eqref{eigenvector31}, the parameters have the expressions
\begin{equation}\label{bbe113}
\bbe^{(3)}_{1,1} = \frac{ (\Ecal_{5,5} + \Ecal_{5,11} ) \Bcal_{1,5} - ( \Ecal_{1,5} + \Ecal_{1,11} ) \Bcal_{5,5}  }{ \Bcal_{5,5}  } \frac{\kappa}{2\mu\pi} \frac{1}{|\log\varepsilon|} (1 + o(1)),
\end{equation}
and 
\[
\bbe^{(3)}_{1,2}= \bbe^{(3)}_{1,4} =1, \quad \bbe^{(3)}_{1,3} = -\bbe^{(3)}_{1,1}.
\]
The second eigenvalue is 
\begin{align*}
\gamma^{(3)}_2 &= \frac{1}{2} \Bigg(     \Jcal^{(3)}_{1,1} -    \Jcal^{(3)}_{1,3} +    \Jcal^{(3)}_{2,2}+    \Jcal^{(3)}_{2,4} \\
&\qquad\qquad+ \sqrt{(-    \Jcal^{(3)}_{1,1} +    \Jcal^{(3)}_{1,3}+    \Jcal^{(3)}_{2,2}+    \Jcal^{(3)}_{2,4})^2+4 (    \Jcal^{(3)}_{1,2}+ {   \Jcal^{(3)}_{1,4}})^2} \Bigg),
\end{align*}
which gives that the resonant frequency is 
\[
\omega^{(3)}_2 = \sqrt{\frac{2\mu\pi}{\kappa\rho}} \sqrt{   \frac{\Bcal_{5,5} }{  \Bcal_{1,1}\Bcal_{5,5} -\Bcal_{1,5}^2 }} \sqrt{|\log\varepsilon|\eta} \left(1 + o(1) \right). 
\]
The corresponding eigenvector is $\bvarphi_6 = ( \bbe^{(3)}_2 )^t \bvarphi^{(3)}$ with
\begin{equation}\label{eq:bbe32}
\bbe^{(3)}_2 = \left(\bbe^{(3)}_{2,1}, \; \bbe^{(3)}_{2,2} , \;  \bbe^{(3)}_{2,3} , \; \bbe^{(3)}_{2,4} \right)^t.
\end{equation}
In \eqref{eq:bbe32}, the parameters share the expressions
\begin{equation}\label{bbe213}
\begin{split}
    \bbe^{(3)}_{2,1} =& - \Bigg( \frac{\sqrt{4 e_1^2+4 e_1 e_2 (\Bcal_{5,5}-\Bcal_{1,1})+e_2^2 \left((\Bcal_{1,1}-\Bcal_{5,5})^2+4 \Bcal_{1,5}^2\right)}+2 e_1 }{2 e_2 \Bcal_{1,5}} \\
     & \qquad \qquad  \times\frac{\Bcal_{5,5}-\Bcal_{1,1}}{2 \Bcal_{1,5}}   \Bigg)  (1 + o(1)) 
\end{split}
\end{equation}
and 
\[
\bbe^{(3)}_{2,3} = -\bbe^{(3)}_{2,1}, \quad \bbe^{(3)}_{2,4}= \bbe^{(3)}_{2,2} =1,
\]
with 
\[
e_1 = \frac{\mu \pi}{\kappa}, \quad  e_2 = \frac{2 e_1\Bcal_{5,5} }{  \Bcal_{1,1}\Bcal_{5,5} -\Bcal_{1,5}^2 }.
\]
From the expression above, it is obvious that $\bbe^{(3)}_{2,3} = -\bbe^{(3)}_{2,1}=\Ocal(1)$.
The third eigenvalue is 
\begin{align*}
\gamma^{(3)}_3 &= \frac{1}{2} \Bigg(     \Jcal^{(3)}_{1,1} +    \Jcal^{(3)}_{1,3} +    \Jcal^{(3)}_{2,2} -    \Jcal^{(3)}_{2,4} \\
&\qquad\qquad- \sqrt{(    \Jcal^{(3)}_{1,1} +    \Jcal^{(3)}_{1,3} -    \Jcal^{(3)}_{2,2}+    \Jcal^{(3)}_{2,4})^2+4 (    \Jcal^{(3)}_{1,2} - {   \Jcal^{(3)}_{1,4}})^2} \Bigg) , 
\end{align*}
which shows that the related resonant frequency is 
\[
\omega^{(3)}_3 = \sqrt{\frac{\eta}{\rho} d_1} (1+o(1)).
\]
In the last equation, $d_1$ enjoys the expression 
\begin{equation}\label{eq:defd1}
    d_1 = \frac{ 2 \Bcal_{1,5} (\Ecal_{1,11}-\Ecal_{1,5})-\Ecal_{5,11} \Bcal_{1,1}+\Bcal_{5,5} (\Ccal_{11}+\Ccal_{17})+\Ecal_{5,5} \Bcal_{1,1}
  + \sqrt{\tilde{d}_1}}{2 \left(\Bcal_{1,1} \Bcal_{5,5} - \Bcal_{1,5}^2 \right)}   
\end{equation}
with 
\begin{equation}\label{eq:deftd1}
    \begin{split}
    \tilde{d}_1 = & (2 \Bcal_{1,5} (\Ecal_{1,11}-\Ecal_{1,5})-\Ecal_{5,11} \Bcal_{1,1}+\Bcal_{5,5} (\Ccal_{11}+\Ccal_{17})+\Ecal_{5,5} \Bcal_{1,1})^2- \\
     &4 \left(\Bcal_{1,5}^2-\Bcal_{1,1} \Bcal_{5,5}\right) \left((\Ecal_{1,11}-\Ecal_{1,5})^2+(\Ecal_{5,11}-\Ecal_{5,5}) (\Ccal_{11}+\Ccal_{17})\right).
\end{split}
\end{equation}
The corresponding eigenvector is $\bvarphi_7 = ( \bbe^{(3)}_3 )^t \bvarphi^{(3)}$ with
\begin{equation}\label{eq:bbe33}
    \bbe^{(3)}_3 = \left(\bbe^{(3)}_{3,1}, \; \bbe^{(3)}_{3,2} , \;  \bbe^{(3)}_{3,3} , \; \bbe^{(3)}_{3,4} \right)^t,
\end{equation}
where the elements have the expressions
\begin{equation}\label{bbe313}
\bbe^{(3)}_{3,1}= \bbe^{(3)}_{3,3} = -\frac{\Ecal_{5,1} - \Ecal_{5,5} + \Bcal_{5,5} d_1}{\Ecal_{1,1} - \Ecal_{1,5} + \Bcal_{1,5} d_1}, \quad \bbe^{(3)}_{3,2}= -\bbe^{(3)}_{3,4}=1,
\end{equation}
with $d_1$ defined in \eqref{eq:defd1}. 
The fourth eigenvalue is 
\begin{align*}
\gamma^{(3)}_4 &= \frac{1}{2} \Bigg(     \Jcal^{(3)}_{1,1} +    \Jcal^{(3)}_{1,3} +    \Jcal^{(3)}_{2,2} -    \Jcal^{(3)}_{2,4} \\
&\qquad\qquad+ \sqrt{(    \Jcal^{(3)}_{1,1} +    \Jcal^{(3)}_{1,3} -    \Jcal^{(3)}_{2,2}+    \Jcal^{(3)}_{2,4})^2+4 (    \Jcal^{(3)}_{1,2} - {   \Jcal^{(3)}_{1,4}})^2} \Bigg), 
\end{align*}
which yields that the resonant frequency is 
\[
\omega^{(3)}_4 = \sqrt{\frac{\eta}{\rho} d_2} (1+o(1)).
\]
In the last formula, the parameter $d_2$ has the expression
\begin{equation}\label{eq:defd2}
    d_2 = \frac{ 2 \Bcal_{1,5} (\Ecal_{1,11}-\Ecal_{1,5})-\Ecal_{5,11} \Bcal_{1,1}+\Bcal_{5,5} (\Ccal_{11}+\Ccal_{17})+\Ecal_{5,5} \Bcal_{1,1}
  - \sqrt{\tilde{d}_1}}{2 \left(\Bcal_{1,1} \Bcal_{5,5} - \Bcal_{1,5}^2 \right)}   
\end{equation}
with ${\tilde{d}_1}$ given in \eqref{eq:deftd1}.
The corresponding eigenvector is 
\begin{equation}\label{eq:bbe34}
    \bbe^{(3)}_4 = \left(\bbe^{(3)}_{4,1}, \; \bbe^{(3)}_{4,2} , \;  \bbe^{(3)}_{4,3} , \; \bbe^{(3)}_{4,4} \right)^t,
\end{equation}
where the parameters are given by 
\begin{equation}\label{bbe413}
\bbe^{(3)}_{4,1}= \bbe^{(3)}_{4,3} = -\frac{\Ecal_{5,1} - \Ecal_{5,5} + \Bcal_{5,5} d_2}{\Ecal_{1,1} - \Ecal_{1,5} + \Bcal_{1,5} d_2}, \quad \bbe^{(3)}_{4,2}= -\bbe^{(3)}_{4,4}=1,
\end{equation}
with $d_2$ defined in \eqref{eq:defd2}.

Now, let us deal with the fourth equation in \eqref{eq:subsys1}.  
The matrix $ \Jcal^{(4)}$ has the following expressions
\[
  \Jcal^{(4)}_{1,1} =  \Jcal^{(4)}_{3,3} =\Ecal_{2,2} - \frac{(\tau\omega)^2 \rho}{\delta} \Bcal_{2,2}, \qquad   \Jcal^{(4)}_{1,3} =  \Jcal^{(4)}_{3,1} =\Ecal_{2,8} , 
\]
\[
  \Jcal^{(4)}_{2,2} =  \Jcal^{(4)}_{4,4} =\Ecal_{6,6} - \frac{(\tau\omega)^2 \rho}{\delta} \Bcal_{6,6}, \qquad   \Jcal^{(4)}_{2,4} =  \Jcal^{(4)}_{4,2} =\Ecal_{6,12} , 
\]
\[
  \Jcal^{(4)}_{1,2} =  \Jcal^{(4)}_{2,1} =- \Jcal^{(4)}_{3,4} = - \Jcal^{(4)}_{4,3} = \Ecal_{2,6} - \frac{(\tau\omega)^2 \rho}{\delta} \Bcal_{2,6}
\]
\[
  \Jcal^{(4)}_{1,4} =  \Jcal^{(4)}_{4,1} =- \Jcal^{(4)}_{2,3} = - \Jcal^{(4)}_{3,2} = \Ecal_{2,12},
\]
and $\ba^{(4)}$ is given by $\ba^{(4)} = (a_2, a_{6}, a_8, a_{12})^t$.
Through analyzing the matrix, the first eigenvalue is 
\begin{align*}
\gamma^{(4)}_1 &= \frac{1}{2} \Bigg(     \Jcal^{(4)}_{1,1} -    \Jcal^{(4)}_{1,3} +    \Jcal^{(4)}_{2,2}+    \Jcal^{(4)}_{2,4}\\
&\qquad\qquad-\sqrt{(-    \Jcal^{(4)}_{1,1} +    \Jcal^{(4)}_{1,3}+    \Jcal^{(4)}_{2,2}+    \Jcal^{(4)}_{2,4})^2+4 (    \Jcal^{(4)}_{1,2}+ {   \Jcal^{(4)}_{1,4}})^2} \Bigg), 
\end{align*}
which gives that the associated resonant frequency is 
\[
\omega^{(4)}_1 = \sqrt{\frac{(\Ecal_{6,6} + \Ecal_{6,12})\eta }{\rho \Bcal_{6,6}}} \left(1 + o(1) \right). 
\]
The corresponding eigenvector is $\bvarphi_9 = ( \bbe^{(4)}_1 )^t \bvarphi^{(4)}$ with 
\begin{equation}\label{bbe114}
   \bvarphi^{(4)} = \left( \bzeta_2, \bzeta_{6}, \bzeta_8, \bzeta_{12} \right)^t, \quad \bbe^{(4)}_1 = \left(\bbe^{(4)}_{1,1}, \; \bbe^{(4)}_{1,2} , \;  \bbe^{(4)}_{1,3} , \; \bbe^{(4)}_{1,4} \right)^t.
\end{equation}
In the last formula, the parameters have the expressions
\begin{equation}\label{beta114}
\bbe^{(4)}_{1,1} = \frac{ (\Ecal_{6,6} + \Ecal_{6,12} ) \Bcal_{2,12} - ( \Ecal_{2,6} + \Ecal_{2,12} ) \Bcal_{6,6}  }{ \Bcal_{6,6}  } \frac{\kappa}{2\mu\pi} \frac{1}{|\log\varepsilon|} (1 + o(1)),
\end{equation}
and 
\[
\bbe^{(4)}_{1,2}= \bbe^{(4)}_{1,4} =1, \quad \bbe^{(4)}_{1,3} = -\bbe^{(4)}_{1,1}.
\]
The second eigenvalue is 
\begin{align*}
\gamma^{(4)}_2 &= \frac{1}{2} \Bigg(     \Jcal^{(4)}_{1,1} -    \Jcal^{(4)}_{1,3} +    \Jcal^{(4)}_{2,2}+    \Jcal^{(4)}_{2,4} \\
&\qquad\qquad+ \sqrt{(-    \Jcal^{(4)}_{1,1} +    \Jcal^{(4)}_{1,3}+    \Jcal^{(4)}_{2,2}+    \Jcal^{(4)}_{2,4})^2+4 (    \Jcal^{(4)}_{1,2}+ {   \Jcal^{(4)}_{1,4}})^2} \Bigg),
\end{align*}
which gives the related resonant frequency 
\[
\omega^{(4)}_2 = \sqrt{\frac{2\mu\pi}{\kappa\rho}} \sqrt{   \frac{\Bcal_{6,6} }{  \Bcal_{2,2}\Bcal_{6,6} -\Bcal_{2,6}^2 }} \sqrt{|\log\varepsilon|\eta} \left(1 + o(1) \right). 
\]
The corresponding eigenvector is $\bvarphi_{10} = ( \bbe^{(4)}_2 )^t \bvarphi^{(4)}$ with
\begin{equation}\label{bbe224}
    \bbe^{(4)}_2 = \left(\bbe^{(4)}_{2,1}, \; \bbe^{(4)}_{2,2} , \;  \bbe^{(4)}_{2,3} , \; \bbe^{(4)}_{2,4} \right)^t.
\end{equation}
In the last expression, the parameters enjoys the expressions 
\begin{equation}\label{beta214}
\begin{split}
    \bbe^{(4)}_{2,1} =& - \Bigg( \frac{\sqrt{4 e_1^2+4 e_1 e_2 (\Bcal_{6,6}-\Bcal_{2,2})+e_2^2 \left((\Bcal_{2,2}-\Bcal_{6,6})^2+4 \Bcal_{2,6}^2\right)}+2 e_1 }{2 e_2 \Bcal_{2,6}} \\
     & \qquad \qquad \times\frac{\Bcal_{6,6}-\Bcal_{2,2}}{2 \Bcal_{2,6}}   \Bigg)  (1 + o(1)) 
\end{split}
\end{equation}
and 
\[
\bbe^{(4)}_{2,3} = -\bbe^{(4)}_{2,1}, \quad \bbe^{(4)}_{2,4}= \bbe^{(4)}_{2,2} =1,
\]
with 
\[
e_1 = \frac{\mu \pi}{\kappa}, \quad  e_2 = \frac{2 e_1\Bcal_{6,6} }{  \Bcal_{2,2}\Bcal_{6,6} -\Bcal_{2,6}^2 }.
\]
The third eigenvalue is 
\begin{align*}
\gamma^{(4)}_3 &= \frac{1}{2} \Bigg(     \Jcal^{(4)}_{1,1} +    \Jcal^{(4)}_{1,3} +    \Jcal^{(4)}_{2,2} -    \Jcal^{(4)}_{2,4} \\
&\qquad\qquad- \sqrt{(    \Jcal^{(4)}_{1,1} +    \Jcal^{(4)}_{1,3} -    \Jcal^{(4)}_{2,2}+    \Jcal^{(4)}_{2,4})^2+4 (    \Jcal^{(4)}_{1,2} - {   \Jcal^{(4)}_{1,4}})^2} \Bigg) , 
\end{align*}
which shows the associated resonant frequency 
\[
\omega^{(4)}_3 = \sqrt{\frac{\eta}{\rho} d_3} \sqrt{ +\Ocal(1) }.
\]
In the last formula, the parameter $d_3$ has the expression 
\begin{equation}\label{eq:defd3}
    d_3 = \frac{ 2 \Bcal_{2,6} (\Ecal_{2,12}-\Ecal_{2,6})-\Ecal_{6,12} \Bcal_{2,2}+\Bcal_{6,6} (\Ccal_{2,2}+\Ccal_{2,8})+\Ecal_{6,6} \Bcal_{2,2}
  + \sqrt{\tilde{d}_3}}{2 \left(\Bcal_{2,2} \Bcal_{6,6} - \Bcal_{2,6}^2 \right)},   
\end{equation}
with 
\begin{equation}\label{eq:deftd3}
    \begin{split}
    \tilde{d}_3 = & (2 \Bcal_{2,6} (\Ecal_{2,12}-\Ecal_{2,6})-\Ecal_{6,12} \Bcal_{2,2}+\Bcal_{6,6} (\Ccal_{2,2}+\Ccal_{2,8})+\Ecal_{6,6} \Bcal_{2,2})^2- \\
     &4 \left(\Bcal_{2,6}^2-\Bcal_{2,2} \Bcal_{6,6}\right) \left((\Ecal_{2,12}-\Ecal_{2,6})^2+(\Ecal_{6,12}-\Ecal_{6,6}) (\Ccal_{2,2}+\Ccal_{2,8})\right).
\end{split}
\end{equation}
The corresponding eigenvector is $\bvarphi_{11} = ( \bbe^{(4)}_3 )^t \bvarphi^{(4)}$ with
\begin{equation}\label{eq:bbe43}
    \bbe^{(4)}_3 = \left(\bbe^{(4)}_{3,1}, \; \bbe^{(4)}_{3,2} , \;  \bbe^{(4)}_{3,3} , \; \bbe^{(4)}_{3,4} \right)^t.
\end{equation}
In \eqref{eq:bbe43}, the parameters are given by 
\begin{equation}\label{beta314}
\bbe^{(4)}_{3,1}= \bbe^{(4)}_{3,3} = -\frac{\Ecal_{6,2} - \Ecal_{6,6} + \Bcal_{6,6} d_3}{\Ecal_{2,2} - \Ecal_{2,6} + \Bcal_{2,6} d_3}, \quad \bbe^{(4)}_{3,2}= -\bbe^{(4)}_{3,4}=1,
\end{equation}
with $d_3$ defined in \eqref{eq:defd3}. 
The fourth eigenvalue is 
\begin{align*}
\gamma^{(4)}_4 &= \frac{1}{2} \Bigg(     \Jcal^{(4)}_{1,1} +    \Jcal^{(4)}_{1,3} +    \Jcal^{(4)}_{2,2} -    \Jcal^{(4)}_{2,4} \\
&\qquad\qquad+ \sqrt{(    \Jcal^{(4)}_{1,1} +    \Jcal^{(4)}_{1,3} -    \Jcal^{(4)}_{2,2}+    \Jcal^{(4)}_{2,4})^2+4 (    \Jcal^{(4)}_{1,2} - {   \Jcal^{(4)}_{1,4}})^2} \Bigg), 
\end{align*}
which gives that the associated resonant frequency is 
\[
\omega^{(4)}_4 = \sqrt{\frac{\eta}{\rho} d_4} (1+o(1)),
\]
where the parameter $d_4$ satisfies
\begin{equation}\label{eq:defd4}
    d_4 = \frac{ 2 \Bcal_{2,6} (\Ecal_{2,12}-\Ecal_{2,6})-\Ecal_{6,12} \Bcal_{2,2}+\Bcal_{6,6} (\Ccal_{22}+\Ccal_{28})+\Ecal_{6,6} \Bcal_{2,2}
  - \sqrt{\tilde{d}_3}}{2 \left(\Bcal_{2,2} \Bcal_{6,6} - \Bcal_{2,6}^2 \right)}    
\end{equation}
with ${\tilde{d}_3}$ given in \eqref{eq:deftd3}.
The corresponding eigenvector is $\bvarphi_{12} = ( \bbe^{(4)}_4 )^t \bvarphi^{(4)}$ with 
\begin{equation}\label{eq:bbe44}
    \bbe^{(4)}_4 = \left(\bbe^{(4)}_{4,1}, \; \bbe^{(4)}_{4,2} , \;  \bbe^{(4)}_{4,3} , \; \bbe^{(4)}_{4,4} \right)^t.
\end{equation}
In \eqref{eq:bbe44}, the parameters enjoy the expressions 
\begin{equation}\label{beta414}
\bbe^{(4)}_{4,1}= \bbe^{(4)}_{4,3} = -\frac{\Ecal_{6,2} - \Ecal_{6,6} + \Bcal_{6,6} d_4}{\Ecal_{2,2} - \Ecal_{2,6} + \Bcal_{2,6} d_4}, \quad \bbe^{(4)}_{4,2}= -\bbe^{(4)}_{4,4}=1,
\end{equation}
with $d_4$ defined in \eqref{eq:defd4}. Finally, we complete the proof. 
\end{proof}

\begin{rem}\label{rem:disren}
According to Theorem \ref{thm:renfre}, the resonant frequencies $\omega_i$ for $i=2,3,4,5,7,8$, $9,11,12$ are of the order $\mathcal{O}(\eta)$. For the remaining resonant frequencies $\omega_i$ with $i=1,6,10$, if the parameter $\varepsilon$ is selected as $\varepsilon\sim e^{-1/\eta^{1-\gamma}}$ with $\gamma\in(0,1)$, then it follows that $\omega_i=\mathcal{O}(\eta^{\gamma/2})$.
\end{rem}

\begin{rem}\label{rmk-res}
From Theorem \ref{thm:renfre}, the eigenvector $\bvarphi_1$ has the expression $\bvarphi_1=-\bzeta_3 + \bzeta_9$. Therefore, the leading order of the corresponding resonant mode is $\bS_D[\bvarphi_1] = -\bxi_3 + \bxi_9$ from Lemma \ref{lem:kekske}. From the expressions of $\bxi_3$ and $\bxi_9$ defined in \eqref{def-xi}, we have that the two resonators oscillate in antiphase with one another. Moreover, it is noted that the fields $\varkappa_i$ for $1 \leq i \leq 6$ in \eqref{eq:dpsi} are classified as dipolar resonant fields (see \cite{LZ2861}). Therefore, we can conclude that the resonant mode $\bS_D^{\tau\omega}[\bvarphi_1]$ belongs to the quadrupolar resonance. 
Similarly, the leading order of the resonant mode induced by $\bvarphi_2$ is  $\bS_D[\bvarphi_2] = \bxi_3 + \bxi_9$. Thus the two resonators oscillate in phase with one another. Hence, the  resonant mode $\bS_D^{\tau\omega}[\bvarphi_2]$ belongs to the dipolar resonance. Following the same reasoning, the eigenvector $\bvarphi_3$ is associated with the quadrupolar resonance, while $\bvarphi_4$ is associated with dipolar resonance. 
Furthermore, the resonant modes linked to eigenvectors $\bvarphi_i$ for $5\leq i\leq 12$ are hybrid modes that encompass both dipolar and quadrupolar resonances.
Therefore, the resonant modes associated with $\bvarphi_2$ and $\bvarphi_4$ could be used to achieve the negative mass density. The resonant modes associated with $\bvarphi_1$ and $\bvarphi_3$ could be used to achieve the negative shear modulus. The resonant modes associated with $\bvarphi_i$ for $5\leq i\leq 12$ could be used to achieve both the negative mass density and the negative shear modulus \cite{NaMLai2011}. 
\end{rem}


\section{Gradient estimates of eigenmodes}\label{sec-eigenmodes}

In this section, we quantitatively study the blow-up behaviors of the eigenmodes gradient in the region between the two adjacent resonators $D_1$ and $D_2$.
By adopting the same notation as that in Section \ref{sec:resonant}, the eigenmodes can be written as 
\begin{equation}\label{eigenmodes}
{\bu}_i({\bx})={\bS}_{D}[\bvarphi_i](\bx)+\Ocal(\omega_i),\quad i=1,\dots,12,
\end{equation}
where $\bvarphi_i$ is given in Theorem \ref{thm:renfre}. By using \eqref{phi12}, \eqref{phi34} and the definition of $\bzeta_i$ in Lemma \ref{lem:kekske}, we obtain
\begin{align}\label{eq:u12}
\bu_1(\bx)=
\begin{cases}
-\varkappa_3+\Ocal(\omega_1),&\quad\bx\in\partial D_1,\\
\varkappa_3+\Ocal(\omega_1),&\quad\bx\in\partial D_2,
\end{cases}\quad
\bu_2(\bx)=
\begin{cases}
\varkappa_3+\Ocal(\omega_2),&\quad\bx\in\partial D_1,\\
\varkappa_3+\Ocal(\omega_2,&\quad\bx\in\partial D_2,
\end{cases}
\end{align}
\begin{align*}
\bu_3(\bx)=
\begin{cases}
-\varkappa_4+\Ocal(\omega_3),&\quad\bx\in\partial D_1,\\
\varkappa_4+\Ocal(\omega_3),&\quad\bx\in\partial D_2,
\end{cases}\quad
\bu_4(\bx)=
\begin{cases}
\varkappa_4+\Ocal(\omega_4),&\quad\bx\in\partial D_1,\\
\varkappa_4+\Ocal(\omega_4),&\quad\bx\in\partial D_2.
\end{cases}
\end{align*}
It is noted that the eigenmodes ${\bu_1}$ and ${\bu_3}$ exhibit opposite directions on the two resonators $D_1$ and $D_2$. Hence, the gradients $\nabla{\bu_1}$ and $\nabla{\bu_3}$ experience blow-up phenomena as $D_1$ and $D_2$ approach each other closely. In contrast, the eigenmodes ${\bu_2}$ and ${\bu_4}$ possess the same phase on the two resonators, respectively, resulting in the absence of the blow-up for the gradients $\nabla{\bu_2}$ and $\nabla{\bu_4}$.

Next, by making use of \eqref{phi5678}, we have for $i=1,2$,
\begin{align}\label{eq:u5}
\bu_{i+4}(\bx)=
\begin{cases}
\bbe^{(3)}_{i,1}\varkappa_1+\varkappa_5+\Ocal(\omega_{i+4}),&\quad\bx\in\partial D_1,\\
-\bbe^{(3)}_{i,1}\varkappa_1+\varkappa_5+\Ocal(\omega_{i+4}),&\quad\bx\in\partial D_2,
\end{cases}
\end{align}
and for $i=3,4$,
\begin{align*}
\bu_{i+4}(\bx)=
\begin{cases}
\bbe^{(3)}_{i,1}\varkappa_1+\varkappa_5+\Ocal(\omega_{i+4}),&\quad\bx\in\partial D_1,\\
\bbe^{(3)}_{i,1}\varkappa_1-\varkappa_5+\Ocal(\omega_{i+4}),&\quad\bx\in\partial D_2,
\end{cases}
\end{align*}
where $\bbe^{(3)}_{i,1}$, $i=1,2,3,4$, are defined in \eqref{bbe113}, \eqref{bbe213}, \eqref{bbe313}, and \eqref{bbe413}, respectively. By \eqref{phi10}, we derive for $i=1,2$,
\begin{align*}
\bu_{i+8}(\bx)=
\begin{cases}
\bbe^{(4)}_{i,1}\varkappa_2+\varkappa_6+\Ocal(\omega_{i+8}),&\quad\bx\in\partial D_1,\\
-\bbe^{(4)}_{i,1}\varkappa_2+\varkappa_6+\Ocal(\omega_{i+8}),&\quad\bx\in\partial D_2,
\end{cases}
\end{align*}
and for $i=3,4$,
\begin{align*}
\bu_{i+8}(\bx)=
\begin{cases}
\bbe^{(4)}_{i,1}\varkappa_2+\varkappa_6+\Ocal(\omega_{i+8}),&\quad\bx\in\partial D_1,\\
\bbe^{(4)}_{i,1}\varkappa_2-\varkappa_6+\Ocal(\omega_{i+8}),&\quad\bx\in\partial D_2,
\end{cases}
\end{align*}
where $\bbe^{(4)}_{i,1}$, $i=1,2,3,4$, are defined in \eqref{beta114}, \eqref{beta214}, \eqref{beta314}, and \eqref{beta414}, respectively. 
From the above expressions, it is observed that some components of the resonant modes ${\bu_i}$ with $5\leq i \leq 12$ oscillate in antiphase for the two resonators. Therefore, the blow-up phenomenon may occur for the gradient of the eigenmodes as the two resonators are brought closer. In the sequel, we shall quantitatively analyze the blow-up rates for the gradient of the eigenmodes as the contrast parameter $\eta>0$ is small enough.

\begin{thm}\label{thm:blowrate}
Consider the system \eqref{eq:or2} with parameters given in \eqref{eq:painma} and \eqref{eq:conpara}, and let the parameter $\varepsilon$ and the eigenmodes $\bu_i$ be defined in \eqref{eq:disd12} and \eqref{eigenmodes}, respectively. Then the following holds true:
\begin{equation*}
|\nabla\bu_i(\bx)|\leq C,\quad i=2,4;
\end{equation*}
if we choose $\varepsilon\sim e^{-1/\eta^{1-\gamma}}$ for some $\gamma\in(0,1)$, then as $\eta\rightarrow 0$, the gradients of the eigenmodes $\bu_i$ have the following behaviors, 
\begin{equation}\label{gradient-est}
|\nabla\bu_i(\bx)|\sim
\begin{cases}
\frac{1}{\delta(\bx')},&\quad i=1,6,10,\\
\frac{\varepsilon+|\bx'|}{\delta(\bx')},&\quad i=3,7,8,11,12,\\
\frac{1}{\delta(\bx')|\log\varepsilon|},&\quad i=5,9,
\end{cases}
\end{equation}
where $\delta(\bx')$ is defined in \eqref{delta_x'}.
\end{thm}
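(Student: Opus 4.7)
The plan is to reduce each gradient estimate to the analysis of an elastic transmission problem in the narrow region $\Omega_r$ and to exploit the explicit structure of the boundary data listed in \eqref{eq:u12}--\eqref{eq:u5}. By \eqref{eigenmodes} and Lemma \ref{lem:kekske}, each eigenmode equals $\bS_D[\bvarphi_i]+\Ocal(\omega_i)$ in $D^e$, and $\bS_D[\bvarphi_i]$ solves $\mathcal{L}_{\lambda,\mu}\bu=0$ in $D^e$ with boundary values that are explicit combinations of the functions $\varkappa_k$ on $\partial D_1$ and $\partial D_2$. Under the scaling $\varepsilon\sim e^{-1/\eta^{1-\gamma}}$ assumed in \eqref{gradient-est}, one checks via Remark \ref{rem:disren} that the remainders $\Ocal(\omega_i)$ are of strictly smaller order than each of the claimed blow-up rates, so it suffices to analyze the leading term $\bS_D[\bvarphi_i]$. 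For $i=2,4$ the boundary data on $\partial D_1$ and $\partial D_2$ coincide (both are $\varkappa_3$ or both $\varkappa_4$), so $\bS_D[\bvarphi_i]$ extends as a smooth elastic field across the neck with no antiphase jump, and combining interior Schauder estimates with boundary $C^{1,\alpha}$ regularity yields $|\nabla\bu_i|\le C$ uniformly in $\varepsilon$.

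For the remaining indices I would split each boundary datum into an in-phase part (whose extension contributes $\Ocal(1)$ to the gradient) and an antiphase part, and reduce the problem to bounding the elastic field with purely antiphase data in $\Omega_r$. Following the approach of \cite{BLL2015,BLL2017,LX}, I would construct an auxiliary linear interpolant
\begin{equation*}
\bar{\bu}(\bx',x_3)=\bg^+(\bx')\,\frac{x_3-h_2(\bx')+\varepsilon/2}{\delta(\bx')}+\bg^-(\bx')\,\frac{h_1(\bx')+\varepsilon/2-x_3}{\delta(\bx')},
\end{equation*}
where $\bg^{\pm}(\bx')$ are the antiphase traces of $\bu_i$ on $\partial D_1$ and $\partial D_2$. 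A direct computation gives $|\nabla\bar{\bu}(\bx',x_3)|\sim|\bg^+(\bx')-\bg^-(\bx')|/\delta(\bx')+|\nabla_{\bx'}\bg^{\pm}(\bx')|$, and reading off $\bg^{\pm}$ from \eqref{eq:u12}--\eqref{eq:u5} produces every rate in \eqref{gradient-est}: a normal antiphase $\varkappa_3$ contributes $|\bg^+-\bg^-|\sim 1$ and hence $1/\delta$, which covers $i=1$ and---because $\bbe^{(3)}_{2,1},\bbe^{(4)}_{2,1}=\Ocal(1)$ by \eqref{bbe213} and \eqref{beta214}---also $i=6,10$; an antiphase of type $\varkappa_4,\varkappa_5,\varkappa_6$ vanishes at the origin and, when expanded on $\partial D_{1,2}$ via \eqref{h1h2}, produces $|\bg^+-\bg^-|\sim\varepsilon+|\bx'|$, giving $(\varepsilon+|\bx'|)/\delta$ for $i=3$ and, via \eqref{bbe313}, \eqref{bbe413}, \eqref{beta314}, \eqref{beta414}, also for $i=7,8,11,12$; finally, for $i=5,9$ the antiphase coefficients $\bbe^{(3)}_{1,1}$ and $\bbe^{(4)}_{1,1}$ are of order $1/|\log\varepsilon|$ by \eqref{bbe113} and \eqref{beta114}, so the normal-type rate is scaled down to $1/(\delta|\log\varepsilon|)$.

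To upgrade the interpolant to a sharp estimate for $\bu_i$ itself, I would compare $\bu_i-\bar{\bu}$ via an energy argument: both fields share the same traces on $\partial D\cap\partial\Omega_r$, $\bu_i$ is elastic-harmonic, and $\bar{\bu}$ generates only a small residue for $\mathcal{L}_{\lambda,\mu}$ thanks to the quadratic structure $\delta(\bx')=\varepsilon+\kappa|\bx'|^2+\Ocal(|\bx'|^{2+\alpha})$ in \eqref{delta_x'}. Korn's inequality in $\Omega_r$ together with integration by parts then yield $\|\nabla(\bu_i-\bar{\bu})\|_{L^\infty}$ of strictly lower order than $\|\nabla\bar{\bu}\|_{L^\infty}$, providing the upper bounds in \eqref{gradient-est}; the matching lower bounds follow from the fact that $\bar{\bu}$ is the energy-minimizing interpolant modulo lower-order corrections.

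The main obstacle I anticipate is the careful bookkeeping for the hybrid modes $i=5,\dots,12$, where two antiphase components with different natural blow-up rates compete: whether the final rate is $1/\delta$, $(\varepsilon+|\bx'|)/\delta$, or $1/(\delta|\log\varepsilon|)$ is decided by the precise asymptotic order of the eigenvector coefficients $\bbe^{(3)}_{i,1}$ and $\bbe^{(4)}_{i,1}$ given in \eqref{bbe113}--\eqref{bbe413} and \eqref{beta114}--\eqref{beta414}, and one must verify that the dominant component is indeed picked up by $\bar{\bu}$. A secondary technical difficulty is propagating the energy comparison uniformly into the degenerating neck, which requires a rescaled Korn inequality adapted to the collapsing geometry in the spirit of \cite{LX}.
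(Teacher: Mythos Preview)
Your approach is correct and shares the same core ingredients as the paper's proof (split into in-phase and antiphase boundary data, compare to an explicit interpolant in the neck), but you are working harder than necessary. The paper exploits the fact that each $\bS_D[\bvarphi_i]$ is a linear combination of the Dirichlet solutions $\bw_j$ from \eqref{eq-wi}; for instance $\bu_1=-\bw_3+\bw_9+\Ocal(\omega_1)$ and $\bu_5=\bbe^{(3)}_{1,1}(\bw_1-\bw_7)+(\bw_5+\bw_{11})+\Ocal(\omega_5)$. Once written this way, every gradient estimate follows immediately from two prepackaged facts: Lemma~\ref{lem-Dui} (which already contains the interpolant construction and the energy comparison you outline, imported from \cite{LX}) gives $\nabla\bw_j=\nabla\bv_j+\Ocal(1)$ with the explicit $\bv_j$ of \eqref{auxiliary improved}, and a citation to \cite{JLX2019} disposes of the in-phase sums like $\bw_3+\bw_9$ or $\bw_5+\bw_{11}$ as bounded. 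Reading off the coefficients $\bbe^{(k)}_{i,1}$ then finishes the proof in a few lines.

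What your route costs is the need to redo, inside the proof, the rescaled energy/Korn argument that Lemma~\ref{lem-Dui} already encapsulates; what it buys is a self-contained argument that does not rely on having set up the $\bw_j$ basis and its pointwise gradient theory in advance. Your identification of the competing antiphase components for the hybrid modes and of the role of the size of $\bbe^{(3)}_{1,1},\bbe^{(4)}_{1,1}$ versus $\bbe^{(3)}_{2,1},\bbe^{(4)}_{2,1}$ is exactly right; the paper handles this simply by inserting the coefficient in front of the relevant $\bw_j-\bw_{j+6}$ and invoking Lemma~\ref{lem-Dui}.
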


\begin{proof}
	From the expressions of $\bu_1$ and $\bu_2$ in \eqref{eq:u12} and the definitions of $\bw_i$ in \eqref{eq-wi}, we have that 
\begin{equation*}
\bu_1=-{\bf w}_3+{\bf w}_9+\Ocal(\omega_1),\quad \bu_2={\bf w}_3+{\bf w}_9+\Ocal(\omega_2).
\end{equation*}
As $\eta\rightarrow0$, $|\nabla({\bf w}_3+{\bf w}_9)|$ is bounded since ${\bf w}_3+{\bf w}_9$ takes the same value on $\partial D_1$ and $\partial D_2$; see, for instance, \cite[Theorem 1.1]{JLX2019}.
In contrast, the following holds due to Lemma \ref{lem-Dui} and classical elliptic estimates,
\begin{equation*}
|\nabla({\bf w}_3-{\bf w}_9)(\bx)|\sim\frac{1}{\delta(\bx')},\quad \bx\in D^e,
\end{equation*}
where $\delta(\bx')$ is defined in \eqref{delta_x'}. Thus, we can conclude that 
\begin{equation*}
|\nabla\bu_1(\bx)|\sim\frac{1}{\delta(\bx')},\quad\bx\in D^e,\quad \max_{\bx\in D^e}|\nabla\bu_2(\bx)|\sim 1.
\end{equation*}
Following the same discussion, we have 
\begin{equation*}
|\nabla\bu_3(\bx)|\sim\frac{\varepsilon+|\bx'|}{\delta(\bx')},\quad\bx\in D^e,\quad \max_{\bx\in D^e}|\nabla\bu_4(\bx)|\sim 1.
\end{equation*}
From the expressions of $\bu_5$ in \eqref{eq:u5} and $\bw_i$ in \eqref{eq-wi}, the resonant mode $\bu_5$ can be written as 
\begin{equation*}
\bu_5=\bbe^{(3)}_{1,1}({\bf w}_1-{\bf w}_7)+{\bf w}_5+{\bf w}_{11}+\Ocal(\omega_5).
\end{equation*}
Since $|\nabla({\bf w}_5+{\bf w}_{11})|$ is bounded, and from Lemma \ref{lem-Dui}, we have that 
\begin{equation*}
|\nabla({\bf w}_1-{\bf w}_7)|\sim\frac{1}{\delta(\bx')},\quad\bx\in D^e.
\end{equation*}
The last two formulas together with \eqref{bbe113} imply that 
\begin{equation*}
|\nabla\bu_5(\bx)|\sim\frac{1}{\delta(\bx')|\log\varepsilon|},\quad\bx\in D^e.
\end{equation*}
The others can be proved in the same way and thus we omit the details here. This completes the proof. 
\end{proof}

\begin{rem}
It follows from \eqref{gradient-est} that $|\nabla\bu_i(\bx)|$ with $i=1,6,10$ blow up at the rate of $\frac{1}{\varepsilon}$ in the narrow region $\Omega_{R_0}$ defined in \eqref{narrowreg}. Moreover, they take the maximum at $|\bx'|=0$. In Figure \ref{blowup} (a), the values of $|\nabla\bu_i(\bx)|$ in the red part $\{|\bx'|\leq\sqrt{\kappa^{-1}\varepsilon}\}$ are larger than those in the orange part. For $i=3,7,8,11,12$, we find that, if $\bx\in\Omega_{R_0}$ and $|\bx'|\leq\varepsilon$, then $|\nabla\bu_i(\bx)|$ are bounded, see the blank area in Figure \ref{blowup} (b); if $\bx\in\Omega_{R_0}$ and $|\bx'|>\varepsilon$, then they blow up at the rate of $\frac{1}{\sqrt\varepsilon}$ and attain the maximum at $|\bx'|=\sqrt{\kappa^{-1}\varepsilon}$, see Figure \ref{blowup} (b). 
\end{rem}

\begin{figure}[t]
	\centering
 \subfigure[Cases of $i=1,6,10$]{
 \includegraphics[width=6cm,height=4cm]{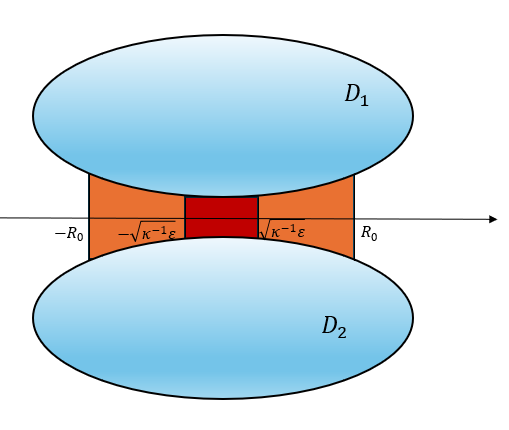}}
 \subfigure[Cases of $i=3,7,8,11,12$]{
 \includegraphics[width=6cm,height=4cm]{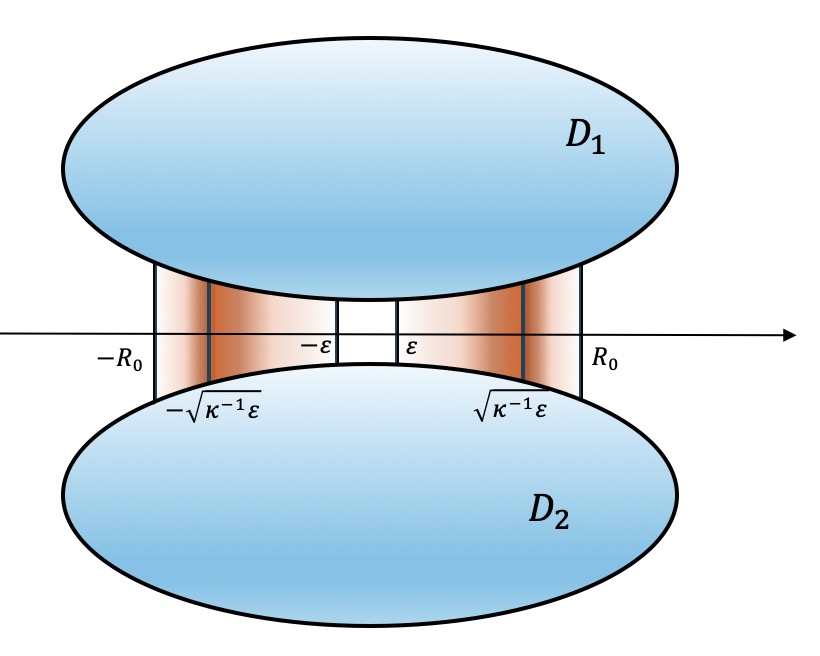}}\caption{Blow-up analysis}
 \label{blowup}
\end{figure}

\begin{rem}\label{rem:debr}
As aforementioned in Remark \ref{rmk-res}, the resonant modes $\bu_i$ with $i=1,3$ and $5\leq i \leq 12$ contain the quadrupolar resonances and can be utilized to realize the negative shear modulus of the metamaterials in elasticity. From gradient estimates of the resonant modes in \eqref{gradient-est}, $|\nabla\bu_i(\bx)|$ with $i=1,6,10$ blow up at the rates of $\frac{1}{\varepsilon}$ and $|\nabla\bu_i(\bx)|$ with $i=5,9$ blow up at the rate of $\frac{1}{\varepsilon\log\varepsilon|}$. Thus, when these modes are used to realize the negative shear modulus, the corresponding metamaterials shall be unstable and can be easily broken when the distance $\varepsilon$ between two hard inclusions is small. In contrast, the gradient estimates $|\nabla\bu_i(\bx)|$ with $i=3,7,8,11,12$ blow up at the rate of  $\frac{1}{\sqrt{\varepsilon \kappa}}$. Thus, if the curvature $\kappa$ of the hard inclusion is chosen at the order of $\Ocal(\frac{1}{\varepsilon})$, the gradient estimates of the resonant modes are bounded and at the order of $\Ocal(1)$. Therefore, applying the resonant modes $\bu_i$ with $i=3,7,8,11,12$ to realize the negative shear modulus results in stable metamaterials, if the curvature of the hard inclusions is properly chosen. 
\end{rem}

\section{Scattering wave fields}

In this section, we study the scattering wave field corresponding to the incident wave field $\bu^i$, and then represent the solution in terms of the subwavelength eigenmodes analyzed in Section \ref{sec-eigenmodes}. 

\begin{thm}\label{thm:scat}

   Consider the system \eqref{eq:or2} with parameters given in \eqref{eq:painma} and \eqref{eq:conpara}.  Let $\bvarphi_i$ with $1\leq i\leq 12$ be the resonant eigenvectors given in Theorem \ref{thm:renfre}. 
   Then under  Assumption \ref{asu:asd},  the total wave field has the expression for $\bx\in\mathbb{R}^3\backslash\overline{D}$,
    \[
\bu(\bx) = \bu^i - {\bS}_{ D}^{\omega} \left[\bS_{ D}^{-1}[\bu^i] \right] + \sum_{i=1}^{12} b_i {\bS}_{ D}[\bvarphi_i] +\Ocal\left((\tau\omega)^3 + \delta\omega \right),
\]
where the coefficients $b_i$ with $1\leq i\leq 12$ are given in \eqref{eq:deb12}, \eqref{eq:deb34}, \eqref{eq:deb58}, and \eqref{eq:deb912}, respectively. 
\end{thm}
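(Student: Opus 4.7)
The plan is to solve the boundary integral equation \eqref{eq:or}, namely $\Acal(\omega,\delta)[\Phi]=F$ with $F=(\bu^i,\delta\partial_{\bnu}\bu^i)^t$, by the same asymptotic scheme used in the proof of Theorem \ref{thm:renfre}, but now with inhomogeneous data. Once $\bpsi$ is identified modulo errors of order $\Ocal((\tau\omega)^3+\delta\omega)$, the representation \eqref{eq:sol} immediately yields $\bu=\bu^i+\bS_D^\omega[\bpsi]$ in $D^e$, so the task reduces to extracting the coefficients of the kernel component of $\bpsi$ in the eigenbasis $\{\bvarphi_i\}_{i=1}^{12}$ of Theorem \ref{thm:renfre}.

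Following the derivation of \eqref{eq:exa1}--\eqref{eq:exa2}, I would expand via \eqref{eq:sise} and \eqref{eq:npse} to obtain
\begin{align*}
\bS_D[\bvarphi-\bpsi]&=\bu^i+\Ocal(\omega),\\
\Big(-\tfrac{\bI}{2}+\bK_D^{*}\Big)[\bvarphi]+(\tau\omega)^2\bK_{D,2}^{*}[\bpsi]-\delta\Big(\tfrac{\bI}{2}+\bK_D^{*}\Big)[\bpsi]&=\delta\partial_{\bnu}\bu^i+\Ocal((\tau\omega)^3+\delta\omega^2).
\end{align*}
Lemma \ref{lem:insin} gives $\bvarphi=\bpsi+\bS_D^{-1}[\bu^i]+\Ocal(\omega)$, and the second equation together with Lemma \ref{lem:kekske} forces $\bvarphi$ to lie, up to $\Ocal((\tau\omega)^2+\delta)$, in $\mathrm{span}\{\bzeta_i\}_{i=1}^{12}$; hence one writes $\bpsi=\sum_{i=1}^{12}a_i\bzeta_i-\bS_D^{-1}[\bu^i]+\Ocal(\omega)$ for an unknown vector $\ba$.

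Testing the second equation against each $\bxi_j$ and invoking the two identities
\[
\int_{\partial D}\Big(-\tfrac{\bI}{2}+\bK_D^{*}\Big)[\bvarphi]\cdot\bxi_j=0,\qquad \int_{\partial D}\bK_{D,2}^{*}[\bpsi]\cdot\bxi_j=-\rho\int_D\bS_D[\bpsi]\cdot\bxi_j,
\]
already exploited in the derivation of \eqref{eq:deref1}, produces the linear system
\[
\Big(\Ecal-\frac{(\tau\omega)^2\rho}{\delta}\Bcal\Big)\ba=G(\bu^i),
\]
where $G_j(\bu^i)$ is an explicit linear functional of the incident data, assembled from $\int_{\partial D}\partial_{\bnu}\bu^i\cdot\bxi_j$, $\int_{\partial D}\bS_D^{-1}[\bu^i]\cdot\bxi_j$ and $\int_D\bu^i\cdot\bxi_j$. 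By Propositions \ref{prop:exob} and \ref{prop-asymp}, the coefficient matrix decouples into the same four blocks $\Jcal^{(k)}$, $k=1,\dots,4$, already diagonalised in Section \ref{sec:resonant}. Projecting each $\ba^{(k)}$ onto the eigenbasis $\{\bbe^{(k)}_\ell\}$ and dividing by the corresponding eigenvalues yields scalar coefficients which, after regrouping via \eqref{eq:bvbr} into the $\bvarphi_i$ basis, are precisely the $b_i$ in \eqref{eq:deb12}--\eqref{eq:deb912}. Substituting back into $\bu^i+\bS_D^\omega[\bpsi]$ and writing $\bS_D^\omega[\bvarphi_i]=\bS_D[\bvarphi_i]+\Ocal(\omega)$ then produces the claimed formula.

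The main obstacle will be the error bookkeeping. Since the eigenvalues $\gamma^{(k)}_\ell$ of $\Jcal^{(k)}$ are of order $\delta$ away from resonance, their inverses amplify the remainders in \eqref{eq:exa1}--\eqref{eq:exa2}, and one must verify that no amplified error exceeds the target $\Ocal((\tau\omega)^3+\delta\omega)$ stated in the theorem. This is particularly delicate in the hybrid blocks $\Jcal^{(3)}$ and $\Jcal^{(4)}$, where the four eigenvalues span several distinct scales in $\eta$ and $|\log\varepsilon|$ (cf.\ Remark \ref{rem:disren}); the residual estimates, the replacement $\bS_D^\omega\to\bS_D$ in the final sum, and the unexpanded term $\bS_D^\omega[\bS_D^{-1}[\bu^i]]$ must all be tracked simultaneously to certify the explicit expressions for the $b_i$.
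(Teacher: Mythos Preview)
Your proposal is essentially correct and follows the same overall route as the paper: expand the integral equation in $\omega$ and $\delta$, use the first row to eliminate one of $(\bvarphi,\bpsi)$, project the second row onto $\{\bxi_j\}$, and solve the resulting $12\times12$ system block by block.

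There is one organisational difference worth noting. You expand in the raw basis $\{\bzeta_i\}$, obtain $\big(\Ecal-\frac{(\tau\omega)^2\rho}{\delta}\Bcal\big)\ba=G(\bu^i)$, and then speak of ``projecting onto the eigenbasis $\{\bbe^{(k)}_\ell\}$ and dividing by the eigenvalues''. The paper instead expands $\bvarphi$ directly in the resonance eigenbasis, $\bvarphi=\sum_i b_i\bvarphi_i+\tilde{\tilde\bvarphi}$, and then invokes the \emph{resonance relation} \eqref{eq:resoef1} (i.e.\ the fact that each $\bvarphi_i$ annihilates the tested system at $\omega=\omega_i$) to subtract and factor out $(\omega^2-\omega_i^2)$ explicitly, arriving at \eqref{eq:soexs5}. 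This is cleaner for the $4\times4$ hybrid blocks $\Jcal^{(3)},\Jcal^{(4)}$: since $\Jcal^{(k)}(\omega)$ depends on $\omega$, the vectors $\bbe^{(k)}_\ell$ from Theorem~\ref{thm:renfre} are null vectors at $\omega=\omega^{(k)}_\ell$, not eigenvectors of $\Jcal^{(k)}(\omega)$ at the incident $\omega$, so your ``diagonalise and divide'' step needs exactly the subtraction trick \eqref{eq:resoef1} to be made precise. Once you insert that, the two arguments coincide. Also, the paper drops $\delta\partial_{\bnu}\bu^i$ into the error immediately (it is $\Ocal(\delta\omega)$ for a subwavelength incident wave) and keeps only the term $-\delta\int_{\partial D}\bS_D^{-1}[\bu^i]\bxi_j$ on the right-hand side, rather than carrying the three functionals you list; your more general $G_j$ reduces to this after discarding $\Ocal(\delta\omega)$ and $\Ocal((\tau\omega)^2)$ contributions.
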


\begin{proof}
Consider the system \eqref{eq:or} and assume the pair $(\bvarphi, \bpsi)$ solve the equation. From asymptotic expansions of  operators $\bS_{D}^{\omega} $ in \eqref{eq:sise} and $\bK_{D}^{\omega, *}$ in \eqref{eq:npse}, there holds that 
\begin{align}\label{eq:exas1}
	\bS_{ D}[\bvarphi - \bpsi]   &= \bu^i + \Ocal(\omega), \\ 
	\label{eq:exas2}
    \left( -\frac{\bI}{2} +  {\bK}_{ D}^{ *} + (\tau\omega)^2 {\bK}_{ D,2}^{ *} \right)[\bvarphi] - \delta \left( \frac{\bI}{2} +  {\bK}_{ D}^{ *}  \right)[\bpsi]  &=\Ocal((\tau\omega)^3 + \delta\omega). 
\end{align}
The equation \eqref{eq:exas1} gives that 
\begin{equation}\label{eq:sospsi}
    \bpsi = \bvarphi - \bS_{ D}^{-1}[\bu^i] + \Ocal(\omega).
\end{equation}
Substituting \eqref{eq:sospsi} into the equation \eqref{eq:exas2} yields 
\begin{equation}\label{eq:soexs2}
\begin{split}
    &\left( -\frac{\bI}{2} +  {\bK}_{ D}^{ *} + (\tau\omega)^2 {\bK}_{ D,2}^{ *} \right)[\bvarphi] - \delta \left( \frac{\bI}{2} +  {\bK}_{ D}^{ *}  \right)[\bvarphi] \\
     =& -\delta \left( \frac{\bI}{2} +  {\bK}_{ D}^{ *}  \right)\bS_{ D}^{-1}[\bu^i] + \Ocal\left((\tau\omega)^3 + \delta\omega \right).
\end{split}
\end{equation}
Next, we decompose the density function $\bvarphi$ into
\begin{equation}\label{eq:decss1}
    \bvarphi = \tilde{\bvarphi} + \tilde{\tilde{\bvarphi}} := \sum_{i=1}^{12} b_i \bvarphi_i + \tilde{\tilde{\bvarphi}},
\end{equation}
where $b_i$ will be fixed later. Substituting the decomposition \eqref{eq:decss1} into \eqref{eq:soexs2} and with the help of Lemma \ref{lem:kekske}, we obtain that
\begin{align}\label{eq:soexs3}
     & \left( -\frac{\bI}{2} + {\bK}_{ D}^{ *} \right) [\tilde{\tilde{\bvarphi}}]  +  (\tau\omega)^2 {\bK}_{ D,2}^{ *} [\bvarphi] - \delta \left( \frac{\bI}{2} +  {\bK}_{ D}^{ *}  \right)[\bvarphi] \\
      =& -\delta \left( \frac{\bI}{2} +  {\bK}_{ D}^{ *}  \right)\bS_{ D}^{-1}[\bu^i]\nonumber + \Ocal\left((\tau\omega)^3 + \delta\omega \right).
\end{align}
Hence, from the last equation, we can conclude that 
\[
\norm{\tilde{\tilde{\bvarphi}}}_{L^2(\partial D)} = \Ocal\left(\delta + (\tau\omega)^2\right). 
\]
Multiplying $\bxi_j$ with $1\leq j \leq 12$ on both sides of \eqref{eq:soexs3} and integrating on $\partial D$, we have that 
\begin{equation}\label{eq:soexs4}
     -(\tau\omega)^2 \rho \int_D  \bS_{ D} [\tilde\bvarphi]\bxi_j - \delta\int_{\partial D} \tilde\bvarphi\bxi_j =  -\delta \int_{\partial D}\bS_{ D}^{-1}[\bu^i] \bxi_j  +\Ocal\left((\tau\omega)^3 + \delta\omega \right).
\end{equation}
Moreover, from the proof of Theorem \ref{thm:renfre}, the eigenmodes $\bvarphi_i$ satisfy the following relationship
\begin{equation}\label{eq:resoef1}
    -(\tau\omega_i)^2 \rho \int_D  \bS_{ D} [\bvarphi_i]\bxi_j - \delta\int_{\partial D} \bvarphi_i\bxi_j = \Ocal\left((\tau\omega)^3 + \delta\omega \right).
\end{equation}
Substituting \eqref{eq:resoef1} into \eqref{eq:soexs4} gives that 
\begin{equation}\label{eq:soexs5}
     \rho \tau^2 \sum_{i=1}^{12}(\omega^2 - \omega_i^2) b_i \int_D  \bS_{ D} [\bvarphi_i]\bxi_j  =  \delta \int_{\partial D}\bS_{ D}^{-1}[\bu^i] \bxi_j  +\Ocal\left((\tau\omega)^3 + \delta\omega \right).
\end{equation}
Next, we solve the equation \eqref{eq:soexs5} to obtain the parameters $b_i$ in \eqref{eq:decss1}. From the proof in Theorem \ref{thm:renfre}, the equation \eqref{eq:soexs5} can be decomposed into $4$ sub-equations. 
We first consider the parameters $b_1$ and $b_2$ corresponding to eigenvectors $\bvarphi_1$ and $\bvarphi_2$. Substituting $\bvarphi_1$ and $\bvarphi_2$ into \eqref{eq:soexs5} yields that 
\[
\left(
 (\omega^2 - \omega_1^2) \bbe^{(1)}_1, \; (\omega^2 - \omega_2^2) \bbe^{(1)}_2
\right) \bb^{(1)} = \frac{\delta}{\rho\tau^2} \bg^{(1)},
\]
with $\bb^{(1)} = (b_1, b_2)^t$, and $\bbe^{(1)}_1$ as well as $\bbe^{(1)}_2$ given in Theorem \ref{thm:renfre}. In the last equation, the function $\bg^{(1)}$ has the expression
\[
\bg^{(1)} = \left( \frac{1}{\Bcal_{3,3}} \int_{\partial D}\bS_{ D}^{-1}[\bu^i] \bxi_3, \quad \frac{1}{\Bcal_{9,9}} \int_{\partial D}\bS_{ D}^{-1}[\bu^i] \bxi_9
\right)^t,
\]
with $\Bcal_{3,3}$ and $\Bcal_{9,9}$ given in Proposition \ref{prop:exob}. Solving the above system, we have that
\begin{equation}\label{eq:deb12}
    \begin{split}
    b_1 = \frac{\delta \left(\bbe^{(1)}_1\right)^t\cdot\bg^{(1)} }{2\rho\tau^2 (\omega^2 - \omega_1^2)}, \qquad  b_2 = \frac{\delta \left(\bbe^{(1)}_2\right)^t\cdot\bg^{(1)} }{2\rho\tau^2 (\omega^2 - \omega_2^2)}.
\end{split}
\end{equation}

Following the same discussion, we can obtain that the parameters $b_3$ and $b_4$ are written by 
\begin{equation}\label{eq:deb34}
    \begin{split}
    b_3 = \frac{\delta \left(\bbe^{(2)}_1\right)^t\cdot\bg^{(2)} }{2\rho\tau^2 (\omega^2 - \omega_3^2)}, \qquad  b_4 = \frac{\delta \left(\bbe^{(2)}_2\right)^t\cdot\bg^{(2)} }{2\rho\tau^2 (\omega^2 - \omega_4^2)},
\end{split}
\end{equation}
where $\bbe^{(2)}_1$ and $\bbe^{(2)}_2$ are given in Theorem \ref{thm:renfre}, and the function $\bg^{(2)}$ has the expression
\[
\bg^{(2)} = \left( \frac{1}{\Bcal_{4,4}} \int_{\partial D}\bS_{ D}^{-1}[\bu^i] \bxi_4, \quad \frac{1}{\Bcal_{10,10}} \int_{\partial D}\bS_{ D}^{-1}[\bu^i] \bxi_{10}
\right)^t.
\]

Next, we analyze the coefficients $\bb^{(3)}=(b_5, b_6, b_7, b_8)^t$. Substituting the expressions of $\bvarphi_i$ with $5\leq i \leq 8$ into \eqref{eq:soexs5}, we have that the parameters $\bb^{(3)}$ satisfy the following system
\begin{equation}\label{eq:eqcoesa}
    \Ncal \bb^{(3)} = \delta \bg^{(3)}  +\Ocal\left((\tau\omega)^3 + \delta\omega \right),
\end{equation}
where $\Ncal$ is a $4\times4$ matrix and $\bg^{(3)}$ is given by
\[
\bg^{(3)} = \left( \int_{\partial D}\bS_{ D}^{-1}[\bu^i] \bxi_1, \quad \int_{\partial D}\bS_{ D}^{-1}[\bu^i] \bxi_5, \quad \int_{\partial D}\bS_{ D}^{-1}[\bu^i] \bxi_7, \quad \int_{\partial D}\bS_{ D}^{-1}[\bu^i] \bxi_{11}
\right)^t.
\]
In the equation \eqref{eq:eqcoesa}, the entries of the matrix $\Ncal$ are given as follows
\[
\Ncal_{:,i}=
\left(\begin{array}{c}
			 \bbe^{(3)}_{i,1} \Bcal_{1,1} + \bbe^{(3)}_{i,2} \Bcal_{5,1}  \vspace{0.2cm} \\ 
		   \bbe^{(3)}_{i,1} \Bcal_{1,5} + \bbe^{(3)}_{i,2} \Bcal_{5,5} \vspace{0.2cm} \\
              \bbe^{(3)}_{i,3} \Bcal_{7,7} + \bbe^{(3)}_{i,4} \Bcal_{11,7} \vspace{0.2cm} \\
		   \bbe^{(3)}_{i,3} \Bcal_{7,11} + \bbe^{(3)}_{i,4} \Bcal_{11,11} 
		\end{array}\right) \rho\tau^2 \left(\omega^2 - \omega_{(i+4)}^2 \right), \quad 1\leq i \leq 4,
\]
where $\Ncal_{:,i}$ denotes the $i$-th column of the matrix $\Ncal$. By Assumption \ref{asu:asd}, we finally have that the coefficients can be written as 
\begin{equation}\label{eq:deb58}
    \begin{split}
    b_5 = & \frac{\eta}{\left(\omega^2 - \omega_{5}^2 \right)  } d_5   +\Ocal\left((\tau\omega)^3 + \delta\omega \right), \quad b_6 =  \frac{\eta}{\left(\omega^2 - \omega_{6}^2 \right)  } d_6   +\Ocal\left((\tau\omega)^3 + \delta\omega \right), \\
     b_7 = & \frac{\eta}{\left(\omega^2 - \omega_{5}^2 \right)  } d_7   +\Ocal\left((\tau\omega)^3 + \delta\omega \right), \quad b_8 =  \frac{\eta}{\left(\omega^2 - \omega_{6}^2 \right)  } d_8   +\Ocal\left((\tau\omega)^3 + \delta\omega \right),
\end{split}
\end{equation}
with
\[
d_5 = \frac{\Ncal_{2,2}\left(\bg^{(3)}_1 - \bg^{(3)}_3 \right) - \Ncal_{1,2}\left(\bg^{(3)}_2 + \bg^{(3)}_4 \right) }{2\rho\left(\Ncal_{1,1}\Ncal_{2,2} - \Ncal_{1,2} \Ncal_{2,1} \right)}, \]
\[
d_6 = \frac{\Ncal_{2,1}\left(\bg^{(3)}_3 - \bg^{(3)}_1 \right) + \Ncal_{1,1}\left(\bg^{(3)}_2 + \bg^{(3)}_4 \right) }{2\rho\left(\Ncal_{1,1}\Ncal_{2,2} - \Ncal_{1,2} \Ncal_{2,1} \right)},
\]
\[
d_7 = \frac{\Ncal_{2,4}\left(\bg^{(3)}_1 + \bg^{(3)}_3 \right) - \Ncal_{1,4}\left(\bg^{(3)}_2 - \bg^{(3)}_4 \right)}{2\rho\left(\Ncal_{1,3}\Ncal_{2,4} - \Ncal_{1,4} \Ncal_{2,3} \right)}, \]
\[
d_8 = \frac{-\Ncal_{2,3}\left(\bg^{(3)}_1 + \bg^{(3)}_3 \right) + \Ncal_{1,4}\left(\bg^{(3)}_2 - \bg^{(3)}_4 \right)}{2\rho\left(\Ncal_{1,3}\Ncal_{2,4} - \Ncal_{1,4} \Ncal_{2,3} \right)}.
\]
We can readily check that the parameters $d_i$ with $5\leq i\leq 8$ are of $\Ocal(1)$.

Next, we analyze the coefficients $\bb^{(4)}=(b_9, b_{10}, b_{11}, b_{12})^t$. Substituting the expressions of $\bvarphi_i$ with $9\leq i \leq 12$ into \eqref{eq:soexs5}, there holds that
\begin{equation}\label{eq:eqcoesa1}
    \Qcal \bb^{(4)} = \delta \bg^{(4)}  +\Ocal\left((\tau\omega)^3 + \delta\omega \right),
\end{equation}
where $\Qcal$ is a $4\times4$ matrix and $\bg^{(4)}$ is given by
\[
\bg^{(4)} = \left( \int_{\partial D}\bS_{ D}^{-1}[\bu^i] \bxi_2, \quad \int_{\partial D}\bS_{ D}^{-1}[\bu^i] \bxi_6, \quad \int_{\partial D}\bS_{ D}^{-1}[\bu^i] \bxi_8, \quad \int_{\partial D}\bS_{ D}^{-1}[\bu^i] \bxi_{12}
\right)^t.
\]
In the equation \eqref{eq:eqcoesa1}, the entries of the matrix $\Qcal$ are given by
\[
\Qcal_{:,i}=
\left(\begin{array}{c}
			 \bbe^{(4)}_{i,1} \Bcal_{2,2} + \bbe^{(4)}_{i,2} \Bcal_{6,2}  \vspace{0.2cm} \\ 
		   \bbe^{(4)}_{i,1} \Bcal_{2,6} + \bbe^{(4)}_{i,2} \Bcal_{6,6} \vspace{0.2cm} \\
              \bbe^{(4)}_{i,3} \Bcal_{8,8} + \bbe^{(4)}_{i,4} \Bcal_{12,8} \vspace{0.2cm} \\
		   \bbe^{(4)}_{i,3} \Bcal_{8,12} + \bbe^{(4)}_{i,4} \Bcal_{12,12} 
		\end{array}\right) \rho\tau^2 \left(\omega^2 - \omega_{(i+8)}^2 \right), \quad 1\leq i \leq 4,
\]
where $\Qcal_{:,i}$ denotes the $i$-th column of the matrix $\Qcal$. By Assumption \ref{asu:asd}, we finally have that the coefficients can be written as 
\begin{equation}\label{eq:deb912}
    \begin{split}
    b_9 = & \frac{\eta}{\left(\omega^2 - \omega_{9}^2 \right)  } d_9   +\Ocal\left((\tau\omega)^3 + \delta\omega \right), \quad b_{10} =  \frac{\eta}{\left(\omega^2 - \omega_{10}^2 \right)  } d_{10}   +\Ocal\left((\tau\omega)^3 + \delta\omega \right), \\
     b_{11} = & \frac{\eta}{\left(\omega^2 - \omega_{11}^2 \right)  } d_{11}   +\Ocal\left((\tau\omega)^3 + \delta\omega \right), \quad b_{12} =  \frac{\eta}{\left(\omega^2 - \omega_{12}^2 \right)  } d_{12}   +\Ocal\left((\tau\omega)^3 + \delta\omega \right),
\end{split}
\end{equation}
with 
\[
d_9 = \frac{\Qcal_{2,2}\left(\bg^{(4)}_1 - \bg^{(4)}_3 \right) - \Qcal_{1,2}\left(\bg^{(4)}_2 + \bg^{(4)}_4 \right) }{2\rho\left(\Qcal_{1,1}\Qcal_{2,2} - \Qcal_{1,2} \Qcal_{2,1} \right)}, \]
\[
d_{10} = \frac{\Qcal_{2,1}\left(\bg^{(4)}_3 - \bg^{(4)}_1 \right) + \Qcal_{1,1}\left(\bg^{(4)}_2 + \bg^{(4)}_4 \right) }{2\rho\left(\Qcal_{1,1}\Qcal_{2,2} - \Qcal_{1,2} \Qcal_{2,1} \right)},
\]
\[
d_{11} = \frac{\Qcal_{2,4}\left(\bg^{(4)}_1 + \bg^{(4)}_3 \right) - \Qcal_{1,4}\left(\bg^{(4)}_2 - \bg^{(4)}_4 \right)}{2\rho\left(\Qcal_{1,3}\Qcal_{2,4} - \Qcal_{1,4} \Qcal_{2,3} \right)}, \]
and 
\[
d_{12} = \frac{-\Qcal_{2,3}\left(\bg^{(4)}_1 + \bg^{(4)}_3 \right) + \Qcal_{1,4}\left(\bg^{(4)}_2 - \bg^{(4)}_4 \right)}{2\rho\left(\Qcal_{1,3}\Qcal_{2,4} - \Qcal_{1,4} \Qcal_{2,3} \right)}.
\]
We can directly check that the parameters $d_i$ with $9\leq i\leq 12$ are of $\Ocal(1)$. Finally, we can conclude that the solution to the system \eqref{eq:or2} can be written as 
\[
\bu(\bx) = \bu^i - {\bS}_{ D}^{\omega} \left[\bS_{ D}^{-1}[\bu^i] \right] + \sum_{i=1}^{12} b_i {\bS}_{ D}[\bvarphi_i] +\Ocal\left((\tau\omega)^3 + \delta\omega \right).
\]
Theorem \ref{thm:scat} is proved.
\end{proof}

\appendix

\section{The proof of Proposition \ref{prop-asymp}}\label{Appendix}

This appendix is devoted to the proof of Proposition \ref{prop-asymp}. 
Let us first prove $(iii)$.

\begin{proof}[\bf The proof of $(iii)$ in Proposition \ref{prop-asymp}]
By using the symmetry of the domain with respect to the origin, we take 
$$\bw_{i+6}(x_{1},x_{2},x_{3})=\bw_{i}(-x_{1},-x_{2},-x_{3})\quad\mbox{in}~D^e,\quad i=1,2,3,$$ 
such that 
\begin{align*}
\bw_{i+6}(x_{1},x_{2},x_{3})\big|_{\partial D_{1}}=\bw_{i}(-x_{1},-x_{2},-x_{3})\big|_{\partial D_{1}}&=0,
\end{align*}
\begin{align*}
\bw_{i+6}(x_{1},x_{2},x_{3})\big|_{\partial D_{2}}=\bw_{i}(-x_{1},-x_{2},-x_{3})\big|_{\partial D_{2}}=\bxi_{i},
\end{align*}
and as $|\bx|\rightarrow\infty$,
\begin{align*}
\bw_{i+6}(x_{1},x_{2},x_{3})=\Ocal\left(|\bx|^{-1}\right),\quad\bw_{i}(-x_{1},-x_{2},-x_{3})=\Ocal\left(|\bx|^{-1}\right).
\end{align*}
Combining with the definition of $\mathcal{E}_{i,j}$ in \eqref{def-Eij}, we obtain 
\begin{equation}\label{origin_sym0}
\mathcal{E}_{i,j}=\mathcal{E}_{i+6,j+6},\quad \mathcal{E}_{i,j+6}=\mathcal{E}_{j+6,i},\quad i,j=1,2,3.
\end{equation}
For $j=4,5,6$, we take 
$$\bw_{j+6}(x_{1},x_{2},x_{3})=-\bw_{j}(-x_{1},-x_{2},-x_{3})\quad\mbox{in}~D^e,$$ 
and 
\begin{align*} 
\bw_{j+6}(x_{1},x_{2},x_{3})\big|_{\partial D_{1}}=-\bw_{j}(-x_{1},-x_{2},-x_{3})\big|_{\partial D_{1}}&=0,
\end{align*}
\begin{align*} 
\bw_{j+6}(x_{1},x_{2},x_{3})\big|_{\partial D_{2}}=-\bw_{j}(-x_{1},-x_{2},-x_{3})\big|_{\partial D_{2}}=\bxi_{j}.
\end{align*}
Then we have 
\begin{align}\label{origin_sym1}
\mathcal{E}_{i,j}=-\mathcal{E}_{i+6,j+6},\quad\mathcal{E}_{i,j+6}=-\mathcal{E}_{i+6,j},\quad i=1,2,3,~j=4,5,6,
\end{align}
and 
\begin{align}\label{origin_sym2}
\mathcal{E}_{i,j}=\mathcal{E}_{i+6,j+6},\quad i,j=4,5,6.
\end{align}

In view of the symmetry of the domain with respect to $\{x_{3}=0\}$, we take
\begin{align*}
\bw_{i+6}(x_{1},x_{2},x_{3})=\begin{pmatrix}
\bw_{i+6}^{(1)}(x_{1},x_{2},x_{3})\\\\
\bw_{i+6}^{(2)}(x_{1},x_{2},x_{3})\\\\
\bw_{i+6}^{(3)}(x_{1},x_{2},x_{3})
\end{pmatrix}
=\begin{pmatrix}
\bw_{i}^{(1)}(x_{1},x_{2},-x_{3})\\\\
\bw_{i}^{(2)}(x_{1},x_{2},-x_{3})\\\\
-\bw_{i}^{(3)}(x_{1},x_{2},-x_{3})
\end{pmatrix},\quad i=1,2,4,
\end{align*}
and 
\begin{align*}
\bw_{i+6}(x_{1},x_{2},x_{3})=\begin{pmatrix}
\bw_{i+6}^{(1)}(x_{1},x_{2},x_{3})\\\\
\bw_{i+6}^{(2)}(x_{1},x_{2},x_{3})\\\\
\bw_{i+6}^{(3)}(x_{1},x_{2},x_{3})
\end{pmatrix}
=\begin{pmatrix}
-\bw_{i}^{(1)}(x_{1},x_{2},-x_{3})\\\\
-\bw_{i}^{(2)}(x_{1},x_{2},-x_{3})\\\\
\bw_{i}^{(3)}(x_{1},x_{2},-x_{3})
\end{pmatrix},\quad i=3,5,6.
\end{align*}
Similarly, making use of the symmetry of the domain with respect to the axis $x_1$, we take 
\begin{align*}
\bw_{i+6}(x_{1},x_{2},x_{3})=\begin{pmatrix}
\bw_{i+6}^{(1)}(x_{1},x_{2},x_{3})\\\\
\bw_{i+6}^{(2)}(x_{1},x_{2},x_{3})\\\\
\bw_{i+6}^{(3)}(x_{1},x_{2},x_{3})
\end{pmatrix}
=\begin{pmatrix}
\bw_{i}^{(1)}(x_{1},-x_{2},-x_{3})\\\\
-\bw_{i}^{(2)}(x_{1},-x_{2},-x_{3})\\\\
-\bw_{i}^{(3)}(x_{1},-x_{2},-x_{3})
\end{pmatrix},\quad i=1,6,
\end{align*}
and 
\begin{align*}
\bw_{i+6}(x_{1},x_{2},x_{3})=\begin{pmatrix}
\bw_{i+6}^{(1)}(x_{1},x_{2},x_{3})\\\\
\bw_{i+6}^{(2)}(x_{1},x_{2},x_{3})\\\\
\bw_{i+6}^{(3)}(x_{1},x_{2},x_{3})
\end{pmatrix}
=\begin{pmatrix}
-\bw_{i}^{(1)}(x_{1},-x_{2},-x_{3})\\\\
\bw_{i}^{(2)}(x_{1},-x_{2},-x_{3})\\\\
\bw_{i}^{(3)}(x_{1},-x_{2},-x_{3})
\end{pmatrix},\quad i=2,3,4,5.
\end{align*}
These relations, in combination with \eqref{def-Eij}, and \eqref{origin_sym0}--\eqref{origin_sym2}, yield the results in $(3)$. 
\end{proof}

The rest part in this section is devoted to the proof of $(i)$ and $(ii)$ in Proposition \ref{prop-asymp}. 

\subsection{Auxiliary results}
In this section, we give several auxiliary results that will be utilized in the proof of $(i)$ and $(ii)$ in Proposition \ref{prop-asymp}.

We shall adapt the auxiliary functions constructed in \cite{LX}. Introduce a scalar function $\bar{v}\in C^{2,\alpha}(\mathbb{R}^3)$, such that $\bar{v}=1$ on $\partial{D}_{1}$, $\bar{v}=0$ on $\partial{D}_{2}$, $\bar{v}=\Ocal\left(|\bx|^{-1}\right)$ as $|\bx|\rightarrow\infty$,
\begin{equation*}
\bar{v}(\bx)=\frac{x_{3}+\frac{\varepsilon}{2}-h_2(\bx')}{\delta(\bx')},\quad\hbox{in}\ \Omega_{2R_0},
\end{equation*}
and $\|\bar{v}\|_{C^{2,\alpha}(D^e\setminus\Omega_{R_0})}\leq\,C$, where $\Omega_{2R_0}$ and $\delta(\bx')$ are defined in \eqref{narrowreg} and \eqref{delta_x'}, respectively. Now we define $\bv_{i}\in C^{2,\alpha}(D^e)$ such that $\bv_{i}=\bw_{i}$ on $\partial D$, $\bv_{i}=\Ocal\left(|\bx|^{-1}\right)$ as $|\bx|\rightarrow\infty$, $\|\bv_{i}\|_{C^{2,\alpha}(D^e\setminus\Omega_{R_0})}\leq C$, and for $\bx\in\Omega_{2R_0}$,
\begin{equation}\label{auxiliary improved}
\begin{split}
\bv_{i}(\bx)&:=\bar{v}(\bx)\varkappa_i
+\frac{\lambda+\mu}{\lambda+2\mu}f(\bar{v}(\bx))\, \partial_{x_{i}}\delta(\bx')\,\varkappa_3,\qquad\qquad i=1,2,\\
\bv_{3}(\bx)&:=\bar{v}(\bx)\varkappa_3
+\frac{\lambda+\mu}{\mu}f(\bar{v}(\bx))\, \Big(\partial_{x_{1}}\delta(\bx')\,\varkappa_1+\partial_{x_{2}}\delta(\bx')\,\varkappa_2\Big),\\
\bv_{i}(\bx)&:=\bar{v}(\bx)\varkappa_i,\quad i=4,5,6,
\end{split}
\end{equation} 
where $\varkappa_i$ are defined in \eqref{eq:dpsi}, and
\begin{equation*}
f(\bar{v}):=\dfrac{1}{2}\Big(\bar{v}-\frac{1}{2}\Big)^{2}-\dfrac{1}{8}, ~f(\bar{v})=0 ~\mbox{on} ~\{|\bx'|<R_0\}.
\end{equation*} 
Similarly, we choose $\underline{v}\in C^{2,\alpha}(\mathbb{R}^3)$ such that $\underline{v}=1$ on $\partial{D}_{2}$, $\underline{v}=0$ on $\partial{D}_{1}$, $\underline{v}=\Ocal\left(|\bx|^{-1}\right)$ as $|\bx|\rightarrow\infty$, $\underline{v}=1-\bar{v}$ in $\Omega_{2R_0}$, and $\|\underline{v}\|_{C^{2,\alpha}(D^e\setminus\Omega_{R_0})}\leq\,C$. Then we take the auxiliary functions $\bv_{i}(\bx)$ by replacing $\bar{v}$ with $\underline{v}$ in \eqref{auxiliary improved}, $i=7,\dots,12$. 

When $\varepsilon=0$, denote by $D_1^0$ and $D_2^0$ the two hard inclusions, and set $D^{e,0}:=\mathbb{R}^{3} \backslash \overline{D_{1}^0 \cup D_{2}^0}$. 
Suppose $\bw_{i}^0$ satisfies
\begin{equation}\label{eq-wi0}
\begin{cases}
\mathcal{L}_{\lambda,\mu}\bw_{i}^0=0,&\mathrm{in}~D^{e,0},\\
\bw_{i}^0=\varkappa_i,&\mathrm{on}~\partial{D}_{1}^{0}\setminus\{0\},\\
\bw_{i}^0=0,&\mathrm{on}~\partial{D_{2}^{0}},\\
\bw_{i}^0=\Ocal\left(|\bx|^{-1}\right),& \mbox{as}~|\bx|\rightarrow\infty,
\end{cases}
\quad 
\begin{cases}
\mathcal{L}_{\lambda,\mu}\bw_{i+6}^0=0,&\mathrm{in}~D^{e,0},\\
\bw_{i+6}^0=\varkappa_i,&\mathrm{on}~\partial{D}_{2}^{0}\setminus\{0\},\\
\bw_{i+6}^0=0,&\mathrm{on}~\partial{D_{1}^{0}},\\
\bw_{i+6}^0=\Ocal\left(|\bx|^{-1}\right),& \mbox{as}~|\bx|\rightarrow\infty,
\end{cases}
\quad i=1,\dots,6.
\end{equation}	
The auxiliary functions $\bv_i^0$ associated with $\bw_i^0$ are constructed using the same way as described in \eqref{auxiliary improved} with $\varepsilon=0$.

We have the following gradient estimates (c.f. \cite[Theorems 3.2 and 3.3]{LX}).

\begin{lem}\label{lem-Dui}
Let $\bw_{i}\in{H}^1(D^e; \mathbb{R}^{3})$ be the weak solutions of \eqref{eq-wi}, $i=1,\dots,12$. Then for sufficiently small  $\varepsilon>0$, $i=1,\dots,12$, and $\bx\in\Omega_{R_0}$, we have 
\begin{align*}
\nabla \bw_{i}(\bx)=\nabla \bv_{i}(\bx)+\Ocal(1).
\end{align*}
\end{lem}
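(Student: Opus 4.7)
The plan is to write $\bw_i = \bv_i + \tilde{\bw}_i$ and reduce the lemma to proving that the remainder satisfies $|\nabla \tilde{\bw}_i(\bx)| = \Ocal(1)$ in the narrow region $\Omega_{R_0}$. Since $\bv_i = \bw_i$ on $\partial D$ and both decay like $|\bx|^{-1}$ at infinity, $\tilde{\bw}_i$ solves the Lamé system
\begin{equation*}
\begin{cases}
\mathcal{L}_{\lambda,\mu} \tilde{\bw}_i = -\mathcal{L}_{\lambda,\mu} \bv_i & \text{in } D^e, \\
\tilde{\bw}_i = 0 & \text{on } \partial D, \\
\tilde{\bw}_i(\bx) = \Ocal(|\bx|^{-1}) & \text{as } |\bx| \to \infty.
\end{cases}
\end{equation*}
The whole point of the careful choice of $\bv_i$ in \eqref{auxiliary improved}, in particular the correction terms involving $f(\bar v)$ and the first derivatives of $\delta(\bx')$, is that a direct calculation shows $\mathcal{L}_{\lambda,\mu} \bv_i$ is bounded in $\Omega_{R_0}$ (the dangerous terms $(\lambda+\mu)\partial_{x_j}\partial_{x_3}(\bar v \varkappa_i)$ that would otherwise be of order $\delta(\bx')^{-2}$ are cancelled by the correction). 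Away from $\Omega_{R_0}$, the bound is immediate from $\|\bv_i\|_{C^{2,\alpha}(D^e \setminus \Omega_{R_0})} \le C$.

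The first substantive step is an $H^1$ energy estimate obtained by an iteration on shrinking cylinders, in the spirit of Bao--Li--Li and Li--Xu. For $0 < s < t < R_0$, multiply the equation by $\tilde{\bw}_i \chi_s$ (where $\chi_s$ is a suitable cutoff supported in $\Omega_t$ and equal to $1$ on $\Omega_s$), apply integration by parts and Korn's inequality, and use Cauchy--Schwarz to absorb boundary terms. This yields a Caccioppoli-type differential inequality in the thickness variable of the narrow region; iterating it a finite number of times produces the bound $\int_{\Omega_r} |\nabla \tilde{\bw}_i|^2 \le C r^2$ for $r \in (0,R_0)$, with $C$ independent of $\varepsilon$. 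Combined with the global bound outside $\Omega_{R_0}$, which follows from standard exterior elliptic estimates and the $\Ocal(|\bx|^{-1})$ decay, one obtains a uniform $L^2$ control on $\nabla \tilde{\bw}_i$.

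Next, the $L^2$ bound is promoted to an $L^\infty$ gradient bound by a rescaling argument. For any $\bx_0 = (\bx_0', x_{0,3}) \in \Omega_{R_0}$ with $|\bx_0'|$ sufficiently small, let $2\delta_0 := \delta(\bx_0')$ and rescale by $\bx = \bx_0 + \delta_0 \bz$ onto a domain of unit size, where the rescaled coefficients satisfy the strong ellipticity uniformly in $\varepsilon$. Standard interior Schauder estimates (or $W^{2,p}$ with $p > 3$) applied to the rescaled equation give pointwise control of $\nabla \tilde{\bw}_i$ at $\bx_0$ in terms of the $L^2$ norm on a unit ball, which, rescaled back, is bounded by the energy estimate above. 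Near $|\bx_0'| = 0$ one uses flat boundaries and the $C^{2,\alpha}$-regularity of $\partial D_1, \partial D_2$ to apply boundary Schauder estimates on the rescaled half-cylinder; the convexity assumption \eqref{convexity} guarantees the geometry is uniformly regular after rescaling.

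The main obstacle is the degeneration of the domain as $\varepsilon \to 0$: both the Poincaré/Korn constants on $\Omega_r$ and the standard elliptic constants blow up with $r \to 0$, so one cannot apply textbook estimates directly. The resolution, and the technical heart of the argument, is the exact cancellation built into the definition of $\bv_i$: the term $\frac{\lambda+\mu}{\lambda+2\mu} f(\bar v) \partial_{x_i}\delta(\bx') \varkappa_3$ (and the analogous one for $\bv_3$) is engineered so that, when one computes $\mathcal{L}_{\lambda,\mu} \bv_i$ in $\Omega_{R_0}$, the principal singular contribution arising from the coupling $(\lambda+\mu)\nabla\nabla \cdot (\bar v \varkappa_i)$ is absorbed. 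Verifying this cancellation by direct computation, and then tracking the resulting bounded but $\varepsilon$-dependent remainder through the iteration, is the delicate part that the argument in \cite{LX} handles and that one would need to reproduce here.
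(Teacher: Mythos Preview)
The paper does not actually prove this lemma; it simply invokes \cite[Theorems~3.2 and~3.3]{LX}. Your outline is precisely the strategy of that reference (and of the Bao--Li--Li papers it builds on): set $\tilde{\bw}_i=\bw_i-\bv_i$, use the structure of $\bv_i$ to control the source $\mathcal{L}_{\lambda,\mu}\bv_i$, run an iterated Caccioppoli--type energy estimate on shrinking cylinders in the narrow region, and upgrade to a pointwise bound by rescaling to unit size and applying interior/boundary Schauder estimates. So your approach and the paper's (cited) approach coincide.

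One point to sharpen: the correction term in \eqref{auxiliary improved} does not make $\mathcal{L}_{\lambda,\mu}\bv_i$ genuinely $\Ocal(1)$ in $\Omega_{R_0}$. It cancels the worst contribution---the $(\lambda+\mu)\partial_{x_j}\partial_{x_3}\bar v$ term of size $|\bx'|/\delta(\bx')^{2}$ appearing in the cross component---but the remaining pieces (for instance $\mu\Delta\bar v$ in the diagonal component) are still of order $1/\delta(\bx')$; and for $i=4,5,6,10,11,12$ there is no correction at all, giving only $|\mathcal{L}_{\lambda,\mu}\bv_i|\le C|\bx'|/\delta(\bx')$. The iteration in \cite{LX} is designed to close with exactly these weaker pointwise bounds (the thinness of the narrow region compensates in the energy integrals), so the conclusion $|\nabla\tilde{\bw}_i|=\Ocal(1)$ still follows, but your assertion that the source is ``bounded'' overstates what the correction achieves. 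Likewise, the form of the intermediate energy bound and the cylinders on which one rescales need to be stated more carefully than ``$\int_{\Omega_r}|\nabla\tilde{\bw}_i|^2\le Cr^2$''; the version that feeds into the pointwise estimate is a local bound on a cylinder of radius comparable to $\delta(\bz')$ centred at $\bz'$, not on $\Omega_r$ centred at the origin.
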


Denote 
\begin{equation*}
\Omega_r^0:=\left\{(\bx',x_{3})\in \mathbb{R}^{3}: h_{2}(\bx')<x_{3}<h_{1}(\bx'),~|\bx'|<r\right\},\quad 0<r\leq 2R_0.
\end{equation*}

Similar to Lemma \ref{lem-Dui}, we have the following result.
\begin{lem}\label{lem-Dui0}
Let $\bw_{i}^0\in{H}^1(D^{e,0}; \mathbb{R}^{3})$ be the weak solutions of \eqref{eq-wi0}, $i=1,\dots,12$. Then for  $\bx\in\Omega_{R_0}^0$, we have 
\begin{align*}
\nabla \bw_{i}^0(\bx)=\nabla \bv_{i}^0(\bx)+\Ocal(1).
\end{align*}
\end{lem}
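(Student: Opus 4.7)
The plan is to mirror the proof of Lemma \ref{lem-Dui} (which treats the analogous separated case $\varepsilon>0$ and appears in \cite{LX}), replacing $\bar v$ by its touching counterpart $\bar v^0$ and $\delta(\bx')$ by $\delta^0(\bx'):=h_1(\bx')-h_2(\bx')$. By the very same construction \eqref{auxiliary improved}, the auxiliary field $\bv_i^0$ matches the boundary data of $\bw_i^0$ on $\partial D^{e,0}\setminus\{0\}$ and reproduces the singular structure near the touching point, so the difference $\bh_i^0:=\bw_i^0-\bv_i^0$ vanishes on $\partial D^{e,0}\setminus\{0\}$, decays at infinity, and satisfies
\begin{equation*}
\mathcal{L}_{\lambda,\mu}\bh_i^0=-\mathcal{L}_{\lambda,\mu}\bv_i^0\quad\text{in }D^{e,0}.
\end{equation*}
It therefore suffices to establish the pointwise bound $|\nabla\bh_i^0|=\Ocal(1)$ in $\Omega_{R_0}^0$.

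First I would compute $\mathcal{L}_{\lambda,\mu}\bv_i^0$ explicitly in $\Omega_{2R_0}^0$. The specific coefficients $(\lambda+\mu)/(\lambda+2\mu)$ and $(\lambda+\mu)/\mu$, together with the quadratic cut-off $f$ in \eqref{auxiliary improved}, are chosen precisely to cancel the leading second-order singular term that arises from differentiating the interpolation $\bar v^0\,\varkappa_i$ in the $x_3$-direction. After that cancellation, only subleading contributions remain whose $L^2(\Omega_{R_0}^0)$ and pointwise norms are controlled uniformly. This calculation is essentially identical in form to its $\varepsilon>0$ analogue, with $\delta^0(\bx')\sim\kappa|\bx'|^2$ from \eqref{delta_x'} (setting $\varepsilon=0$).

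Next I would test the equation for $\bh_i^0$ against $\bh_i^0$ itself and use strong ellipticity \eqref{eq:painma} together with Korn's inequality to derive the global energy bound $\int_{D^{e,0}}|\nabla\bh_i^0|^2\leq C$. To upgrade this to the pointwise estimate $|\nabla\bh_i^0|=\Ocal(1)$, I would invoke interior and boundary Schauder estimates at the natural geometric scale: at a point $\bx\in\Omega_{R_0}^0$ with $|\bx'|=s>0$, one rescales a neighbourhood of diameter comparable to $s$ to a unit ball on which the boundary is uniformly of class $C^{2,\alpha}$ and the right-hand side is bounded. Standard $W^{1,\infty}$ regularity for the Lam\'e system then yields $|\nabla\bh_i^0(\bx)|\leq C$ with $C$ independent of $s$.

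The main obstacle is propagating this bound uniformly down to the touching point $\bx'=0$, where the exterior domain $D^{e,0}$ degenerates and its boundary ceases to be $C^{2,\alpha}$. This difficulty is handled by a Caccioppoli-type iteration on dyadic annular scales $s_k=2^{-k}R_0$, exploiting the strict convexity of the inclusions through the quadratic lower bound $\delta^0(\bx')\sim\kappa|\bx'|^2$ in \eqref{delta_x'} and the scaling invariance of the Lam\'e system, following the same scheme used in \cite{BLL2017,LX}. Since all constants produced by the rescaling depend only on the curvature $\kappa$ and on $(\lambda,\mu)$, the argument carries over to the present Lam\'e setting without essential modification and delivers the desired estimate.
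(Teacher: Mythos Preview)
Your proposal is correct and follows the approach the paper itself refers to. The paper does not supply an independent proof of Lemma \ref{lem-Dui0}; it simply states that the result is obtained ``similar to Lemma \ref{lem-Dui}'', which in turn is quoted from \cite[Theorems 3.2 and 3.3]{LX}. Your outline---setting $\bh_i^0=\bw_i^0-\bv_i^0$, exploiting the built-in cancellation in the auxiliary functions \eqref{auxiliary improved} to control $\mathcal{L}_{\lambda,\mu}\bv_i^0$, deriving a global energy bound, and then upgrading to a pointwise gradient bound via rescaled Schauder estimates and a dyadic iteration down to the touching point---is precisely the scheme of \cite{LX} (and \cite{BLL2017}) specialized to $\varepsilon=0$, so it matches the paper's intended argument.
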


Define
$$\mathcal{V}:=D^e\setminus\overline{D_{1}^{0}\cup D_{2}^{0}},$$
and
$$\mathcal{C}_{r}:=\left\{x\in\mathbb R^{3}: |\bx'|<r,~-\frac{\varepsilon}{2}+2\min_{|\bx'|=r}h_{2}(\bx')\leq x_{3}\leq\frac{\varepsilon}{2}+2\max_{|\bx'|=r}h_{1}(\bx')\right\},\quad r<R_0.$$
Then by employing the mean value theorem and Lemma \ref{lem-Dui}, we are able to obtain the convergence result for $\bw_{i}-\bw_{i}^{0}$. The specific details of the proof for this convergence result can be found in \cite[Lemma 5.1]{LX}. 

\begin{lem}\label{lem-differ}
Let $\bw_{i}$ and $\bw_{i}^{0}$ satisfy \eqref{eq-wi} and \eqref{eq-wi0}, respectively. Then we have
\begin{align}\label{differ1}
|(\bw_{i}-\bw_{i}^{0})(\bx)|\leq C\varepsilon^{1/2},\quad i=1,2,3,7,8,9,\quad \bx\in \mathcal{V}\setminus \mathcal{C}_{\varepsilon^{1/4}},
\end{align}
and
\begin{align*}
|(\bw_{i}-\bw_{i}^{0})(\bx)|\leq C\varepsilon^{2/3},\quad \alpha=4,5,6,10,11,12,\quad \bx\in \mathcal{V}\setminus \mathcal{C}_{\varepsilon^{1/3}},
\end{align*}
where $C$ is a constant independent of $\varepsilon$. 
\end{lem}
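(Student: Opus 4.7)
The plan is to treat $\phi_i := \bw_i - \bw_i^0$ as a Lam\'e-harmonic function in the common exterior domain $\mathcal V$, with decay $\Ocal(|\bx|^{-1})$ at infinity. The argument then splits into two stages: (i) a boundary estimate on $\partial \mathcal V \setminus \mathcal C_r$ using a vertical mean value argument together with the gradient control from Lemma \ref{lem-Dui}, and (ii) propagation of this bound to the full interior $\mathcal V \setminus \mathcal C_r$.

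First I would control $\phi_i$ on the accessible part of $\partial \mathcal V$. For $\bx \in \partial D_1^0 \setminus \mathcal C_r$ with $r = \varepsilon^{1/4}$ (resp. $r = \varepsilon^{1/3}$), the vertical shift $\bx_\ast := \bx + (\varepsilon/2)e_3$ lies on $\partial D_1$, and the segment joining them is contained in $\mathcal V$ because $|\bx'| \ge r > 0$. Since $\bw_i^0(\bx) = \varkappa_i(\bx)$ and $\bw_i(\bx_\ast) = \varkappa_i(\bx_\ast)$, a mean value computation gives
\begin{equation*}
|\phi_i(\bx)| \le |\varkappa_i(\bx) - \varkappa_i(\bx_\ast)| + \int_0^{\varepsilon/2} |\partial_{x_3}\bw_i(\bx+te_3)|\,dt,
\end{equation*}
where the first term is $\Ocal(\varepsilon)$ by the smoothness of $\varkappa_i$. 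An analogous construction on $\partial D_2^0$ handles the lower boundary, and the cases $i = 7,\dots,12$ follow by swapping the roles of $D_1^0$ and $D_2^0$.

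To estimate the vertical integral I would invoke Lemma \ref{lem-Dui} together with the explicit auxiliary functions $\bv_i$ in \eqref{auxiliary improved}: in the narrow region $\Omega_{R_0}$ they yield $|\nabla \bw_i(\bx)| \lesssim \delta(\bx')^{-1}$ for $i=1,2,3$ and $|\nabla \bw_i(\bx)| \lesssim |\bx'|\delta(\bx')^{-1}$ for $i=4,5,6$. Because $\delta(\bx') \sim \varepsilon + |\bx'|^2$ by \eqref{delta_x'}, for $|\bx'| \ge \varepsilon^{1/4}$ one has $\delta \gtrsim \varepsilon^{1/2}$, giving an integral of order $\varepsilon/\varepsilon^{1/2} = \varepsilon^{1/2}$. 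For $i=4,5,6$ and $|\bx'| \ge \varepsilon^{1/3}$, one uses $|\bx'|\delta^{-1} \lesssim \varepsilon^{-1/3}$, leading to $\varepsilon \cdot \varepsilon^{-1/3} = \varepsilon^{2/3}$. This delivers the desired bound on $\partial \mathcal V \setminus \mathcal C_r$ for each of the two choices of $r$.

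The final and main difficulty is propagating this pointwise boundary bound to the full interior $\mathcal V \setminus \mathcal C_r$. Because the Lam\'e system admits no scalar maximum principle, I would run an energy estimate on $\phi_i$ with a cutoff near $\partial \mathcal C_r$ and at infinity, combining Korn's inequality with the $\Ocal(|\bx|^{-1})$ decay to absorb far-field contributions; an interior Moser-type iteration (or, alternatively, a Kelvin-matrix single-layer representation on the smooth part of $\partial \mathcal V$) then upgrades the resulting $L^2$ bound to the pointwise estimate. The chief obstacle is controlling the contribution from $\partial \mathcal C_r$, where $\phi_i$ is not a priori small; one exploits that $|\partial \mathcal C_r| \to 0$ as $r \to 0$ and that $\phi_i$ is uniformly bounded on $\partial \mathcal C_r$ by Lemmas \ref{lem-Dui} and \ref{lem-Dui0}, so its contribution remains of lower order than $\varepsilon^{1/2}$ (resp. $\varepsilon^{2/3}$) for the two regimes.
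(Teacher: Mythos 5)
Your boundary estimate — the vertical mean value argument combined with the gradient bounds $|\nabla\bw_i|\lesssim\delta(\bx')^{-1}$ (resp.\ $|\bx'|\delta(\bx')^{-1}$) supplied by Lemma~\ref{lem-Dui} and the auxiliary functions \eqref{auxiliary improved} — is the right mechanism and matches the ingredients the paper credits (mean value theorem plus Lemma~\ref{lem-Dui}, with details deferred to \cite{LX}). Two small corrections there: the segment from $\bx\in\partial D_1^0$ to $\bx_*=\bx+(\varepsilon/2)e_3\in\partial D_1$ is \emph{not} contained in $\mathcal V$ (it runs through $D_1^0$, which is excised from $\mathcal V$); what matters, and what is true, is that it lies in $\overline{D^e}$, where $\bw_i$ is defined. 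Moreover, $\partial\mathcal V$ also contains pieces of $\partial D_1$ and $\partial D_2$ (not only the $\partial D_j^0$ pieces), and on those pieces the downward segment lies in $D_1$ (resp.\ $D_2$), so one must apply the mean value theorem to $\bw_i^0$ there rather than to $\bw_i$; your write-up covers only one of the two cases.

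The genuine gap is the interior propagation. You offer two routes as if interchangeable, and the first one, the energy estimate with a cutoff near $\partial\mathcal C_r$ plus Korn/Moser, does not work as stated. The point of Lemma~\ref{lem-Dui} is precisely that $\nabla\bw_i$ and $\nabla\bw_i^0$ are of size $\delta(\bx')^{-1}$ in the neck, so on $\partial\mathcal C_r$ with $r=\varepsilon^{1/4}$ you have $|\nabla\phi_i|\sim\varepsilon^{-1/2}$ while $|\phi_i|\sim1$. The boundary term in the energy identity over $\mathcal V\setminus\mathcal C_r$ then contributes $\sim|\partial\mathcal C_r\cap\mathcal V|\cdot1\cdot\varepsilon^{-1/2}$, which is $\Ocal(1)$, not small; the same obstruction appears when a cutoff is used, since $|\nabla\chi|\,|\phi_i|\,|\nabla\phi_i|$ is large in the transition shell. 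So no smallness is produced, and Moser iteration on top of an $\Ocal(1)$ energy gives only $\Ocal(1)$ in $L^\infty$. The second route, a Green/Poisson representation on $\mathcal V\setminus\mathcal C_r$, is the one that could deliver the result, because there the contribution from $\partial\mathcal C_r\cap\mathcal V$ is controlled by the $L^\infty$ bound of $\phi_i$ there times the harmonic-measure-type mass of that set, which is $\Ocal(r^3)$ and hence far below $\varepsilon^{1/2}$. But to make this rigorous for the Lam\'e system (no scalar maximum principle) you must establish a maximum-modulus/Poisson-kernel bound that is \emph{uniform in $\varepsilon$} on the $\varepsilon$-dependent domain $\mathcal V\setminus\mathcal C_r$, together with the quantitative decay of the kernel mass on $\partial\mathcal C_r$ for target points away from the neck. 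That is the technical core, and at the moment it is asserted rather than proved.
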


\subsection{Proof of $(i)$ in Proposition \ref{prop-asymp}}
With Lemmas \ref{lem-Dui}--\ref{lem-differ} in hand, we are ready to finish the proof of $(i)$ in Proposition \ref{prop-asymp}. 

\begin{proof}[\bf Proof of $(i)$ in Proposition \ref{prop-asymp}]
In view of \eqref{def-Eij}, let us divide $\mathcal{E}_{1,1}$ into three parts:
\begin{align*}
\mathcal{E}_{1,1}&=\int_{D^e}\left(\mathbb{C}e(\bw_1),e(\bw_1)\right)\\
&=\int_{D^e\setminus\Omega_{R_0}}\left(\mathbb{C}e(\bw_1),e(\bw_1)\right)+\int_{\Omega_{R_0}\setminus\Omega_{\varepsilon^{1/8}}}\left(\mathbb{C}e(\bw_1),e(\bw_1)\right)+\int_{\Omega_{\varepsilon^{1/8}}}\left(\mathbb{C}e(\bw_1),e(\bw_1)\right)\\
&=:\mbox{I}+\mbox{II}+\mbox{III}.
\end{align*}
Next we estimate $\mbox{I}$, $\mbox{II}$, and $\mbox{III}$ in three steps. 

{\bf Step 1.} We claim that
\begin{equation}\label{est-I}
\mbox{I}=\int_{D^{e,0}\setminus\Omega_{R_0}}\left(\mathbb{C}e(\bw_1^0),e(\bw_1^0)\right)+\Ocal(\varepsilon^{1/4}).
\end{equation}
Indeed, making use of $|(D_{1}^{0}\cup D_{2}^{0})\setminus(D_{1}\cup D_{2}\cup\Omega_{R_0})|, |(D_{1}\cup D_2)\setminus (D_{1}^{0}\cup D_{2}^{0})|\leq C\varepsilon$, $|e(\bw_1)|$ and $|e(\bw_1^0)|$ are bounded in $(D_{1}^{0}\cup D_{2}^{0})\setminus(D_{1}\cup D_{2}\cup\Omega_{R_0})$ and $(D_{1}\cup D_2)\setminus (D_{1}^{0}\cup D_{2}^{0})$, respectively, we have 
\begin{align}\label{est-I1}
\mbox{I}&=\int_{D^e\setminus(D_1^0\cup D_{2}^{0}\cup\Omega_{R_0})}\left(\Big(\mathbb{C}e(\bw_{1}),e(\bw_{1})\Big)-\Big(\mathbb{C}e(\bw_{1}^{0}),e(\bw_{1}^{0})\Big)\right)\nonumber\\
&\quad+\int_{(D_{1}^{0}\cup D_{2}^{0})\setminus(D_{1}\cup D_{2}\cup\Omega_{R_0})}\Big(\mathbb{C}e(\bw_{1}),e(\bw_{1})\Big)-\int_{(D_{1}\cup D_2)\setminus (D_{1}^{0}\cup D_{2}^{0})}\Big(\mathbb{C}e(\bw_{1}^{0}),e(\bw_{1}^{0})\Big)\nonumber\\
&=\int_{D^e\setminus(D_1^0\cup D_{2}^{0}\cup\Omega_{R_0})}\Big(\mathbb{C}e(\bw_{1}-\bw_{1}^{0}),e(\bw_{1}+\bw_{1}^{0})\Big)+\Ocal(\varepsilon).
\end{align}
Denote 
$$M:=2\max_{\bx,\by\in\overline{D_1^0\cup D_2^0}}|\bx-\by|.$$
Then we have $D_1\cup D_2\subset B_{M}$. In view of 
$$\frac{\partial\bw_{1}}{\partial r}=\Ocal\left(|\bx|^{-2}\right),\quad \frac{\partial\bw_{1}^0}{\partial r}=\Ocal\left(|\bx|^{-2}\right)\quad\mbox{as}~|\bx|\rightarrow\infty,$$
using the integration by parts, \eqref{differ1}, and $|\nabla(\bw_{1}+\bw_{1}^{0})|_{L^\infty(\mathcal{V}\setminus{\Omega_{R_0}})}\leq C$, we have 
\begin{align*}
\left|\int_{\mathbb R^3\setminus B_M}\Big(\mathbb{C}e(\bw_{1}-\bw_{1}^{0}),e(\bw_{1}+\bw_{1}^{0})\Big)\right|&=\left|\int_{\mathbb R^3\setminus B_M}\Big(\mathbb{C}e(\bw_{1}+\bw_{1}^{0}),e(\bw_{1}-\bw_{1}^{0})\Big)\right|\\
&=\left|\int_{\partial B_M}\frac{\partial(\bw_{1}+\bw_{1}^{0})}{\partial{\bnu}}\big|_{+}(\bw_{1}-\bw_{1}^{0})\right|\leq C\varepsilon^{1/2}.
\end{align*}
Thus, \eqref{est-I1} becomes
\begin{align}\label{est-I1-1}
\mbox{I}=\int_{B_M\setminus(D_1\cup D_2\cup D_1^0\cup D_{2}^{0}\cup\Omega_{R_0})}\Big(\mathbb{C}e(\bw_{1}-\bw_{1}^{0}),e(\bw_{1}+\bw_{1}^{0})\Big)+\Ocal(\varepsilon^{1/2}).
\end{align}
Recalling
$$\mathcal{L}_{\lambda,\mu}\bw_{1}=0,\quad \mbox{in}~  D^e\setminus (D_{1}^{0}\cup D_{2}^{0}\cup\Omega_{R_0}).$$
For any fixed $\bx_0\in D^e\setminus (D_{1}^{0}\cup D_{2}^{0}\cup\Omega_{R_0})$, making use of the condition $\bw_{1}=\Ocal\left(|\bx|^{-1}\right)$ as $|\bx|\rightarrow\infty$, we find that for any $\varepsilon_0\in(0,1)$, there exists a constant $M_0:=M_0(\varepsilon_0)\geq|\bx_0|$ such that 
$$|\bw_{1}|_{\partial B_{M_0}}<\varepsilon_0.$$
Together with the maximum modulus estimate for Lam\'{e} systems in $B_{M_0}\setminus (D_{1}\cup D_{2}\cup D_{1}^{0}\cup D_{2}^{0}\cup\Omega_{R_0})$, we have 
$$|\bw_{1}|\leq 1,\quad \mbox{in}~ D^e\setminus (D_{1}^{0}\cup D_{2}^{0}\cup\Omega_{R_0}).$$
Similarly, we have 
$$|\bw_{1}^{0}|\leq1,\quad \mbox{in}~ D^e\setminus (D_{1}^{0}\cup D_{2}^{0}\cup\Omega_{R_0}).$$
Since $\partial D_{1}, \partial D_{1}^{0}, \partial D_{2}$, and $\partial D_2^0$ are $C^{2,\alpha}$, we have
\begin{equation}\label{DD v1 1}
|\nabla^{2}(\bw_{1}-\bw_{1}^{0})|\leq|\nabla^{2}\bw_{1}|+|\nabla^{2}\bw_{1}^{0}|\leq C,\quad\mbox{in}~D^e\setminus(D_{1}^{0}\cup D_{2}^{0}\cup\Omega_{R_0}).
\end{equation}
Note that 
$$\mathcal{L}_{\lambda,\mu}(\bw_{1}-\bw_{1}^{0})=0,\quad\mbox{in}~ D^e\setminus (D_{1}^{0}\cup D_{2}^{0}\cup\Omega_{R_0}).$$
Then it follows from \eqref{differ1} that
\begin{equation}\label{es v11 v1*1}
\|\bw_{1}-\bw_{1}^{0}\|_{L^{\infty}(D^e\setminus(D_{1}^{0}\cup D_{2}^{0}\cup\Omega_{\varepsilon^{1/4}}))}\leq C\varepsilon^{1/2}.
\end{equation}
By using the interpolation inequality, \eqref{DD v1 1}, and \eqref{es v11 v1*1}, we obtain for $\bx\in B_{M}\setminus(D_{1}^{0}\cup D_{2}^{0}\cup\Omega_{R_0})$,
\begin{equation}\label{D v1 *1}
|\nabla(\bw_{1}-\bw_{1}^{0})(\bx)|\leq \varepsilon^{1/4}|\nabla^{2}(\bw_{1}-\bw_{1}^{0})(\bx)|+C\varepsilon^{-1/4}|(\bw_{1}-\bw_{1}^{0})(\bx)|\leq C\varepsilon^{1/4}.
\end{equation}
Substituting \eqref{D v1 *1} into \eqref{est-I1-1}, we obtain \eqref{est-I}.

{\bf Step 2.} Proof of 
\begin{equation}\label{est-II}
\mbox{II}=\int_{\Omega_{R_0}^{0}\setminus\Omega_{\varepsilon^{1/8}}^{0}}\Big(\mathbb{C}e(\bv_{1}^{0}),e(\bv_{1}^{0})\Big)+\mathcal{C}_1^1+\Ocal(\varepsilon^{1/4}|\log\varepsilon|),
\end{equation}
where 
\begin{align*}
\mathcal{C}_1^1=2\int_{\Omega_{R_0}^{0}}\Big(\mathbb{C}e(\bv_{1}^{0}),e(\bw_{1}^{0}-\bv_{1}^{0})\Big)+\int_{\Omega_{R_0}^{0}}\Big(\mathbb{C}e(\bw_{1}^{0}-\bv_{1}^{0}),e(\bw_{1}^{0}-\bv_{1}^{0})\Big)
\end{align*}
is a constant independent of $\varepsilon$. 
Indeed, 
\begin{align*}
&\mbox{II}-\int_{\Omega_{R_0}^0\setminus\Omega_{\varepsilon^{1/8}}^0}\left(\mathbb{C}e(\bw_1^0),e(\bw_1^0)\right)\\
&=\int_{(\Omega_{R_0}\setminus\Omega_{\varepsilon^{1/8}})\setminus(\Omega_{R_0}^{0}\setminus\Omega_{\varepsilon^{1/8}}^{0})}\Big(\mathbb{C}e(\bw_{1}),e(\bw_{1})\Big)\nonumber\\
&\quad+2\int_{\Omega_{R_0}^{0}\setminus\Omega_{\varepsilon^{1/8}}^{0}}\Big(\mathbb{C}e(\bw_{1}^{0}),e(\bw_{1}^{0}-\bw_{1}^{0})\Big)+\int_{\Omega_{R_0}^{0}\setminus\Omega_{\varepsilon^{1/8}}^{0}}\Big(\mathbb{C}e(\bw_{1}-\bw_{1}^{0}),e(\bw_{1}-\bw_{1}^{0})\Big).
\end{align*}
Set
\begin{align*}
\begin{cases}
\bx'-\bz'=|\bz'|^{2}\by',\\
x_{3}=|\bz'|^{2}y_{3}.
\end{cases}
\end{align*}
Then we perform a rescaling transformation to map the region $\Omega_{|\bz'|+|\bz'|^{2}}^0\setminus\Omega_{|\bz'|}^0$ onto $Q_{1}^{0}:=\left\{\by\in\mathbb{R}^{3}: \frac{1}{\delta}h_{2}(\delta\,\by'+\bz')<y_{3}
<\frac{1}{\delta}h_{1}(\delta\,\by'+\bz'),~|\by'|<1\right\}$. Note that $Q_{1}^{0}$ is a nearly unit-size square  whose height is uniformly bounded from above and below by a constant independent of $\varepsilon$. Set 
$$Q_{1}:=\left\{\by\in\mathbb{R}^{3}: -\frac{\varepsilon}{2\delta}+\frac{1}{\delta}h_{2}(\delta\,\by'+\bz')<y_{3}
<\frac{\varepsilon}{2\delta}+\frac{1}{\delta}h_{1}(\delta\,\by'+\bz'),~|\by'|<1\right\}.$$
Let
$$\mathbb W_{1}=\bw_{1}(\bz'+|\bz'|^{2}\by',|\bz'|^{2}y_{3})\quad\mbox{in}~Q_{1},\quad\mbox{and}\quad \mathbb W_{1}^{0}=\bw_{1}^{0}(\bz'+|\bz'|^{2}\by',|\bz'|^{2}y_{3})\quad\mbox{in}~Q_{1}^{0}.$$
Then as in \eqref{D v1 *1} and using the scaling, we get
\begin{equation}\label{difference Dv1 1}
|\nabla(\bw_{1}-\bw_{1}^{0})|\leq C\varepsilon^{1/4}|\bx'|^{-2}\quad\mbox{in}~\Omega_{R_0}^{0}\setminus\Omega_{\varepsilon^{1/8}}^{0}.
\end{equation}
Similarly, we have 
\begin{equation}\label{Dv11}
|\nabla \bw_{1}|\leq C|\bx'|^{-2}\quad\mbox{in}~\Omega_{R_0}\setminus\Omega_{\varepsilon^{1/8}},\quad |\nabla \bw_{1}^{0}|\leq C|\bx'|^{-2}\quad\mbox{in}~\Omega_{R}^{0}\setminus\Omega_{\varepsilon^{1/8}}^{0}.
\end{equation}
It follows from $|(\Omega_{R_0}\setminus\Omega_{\varepsilon^{1/8}})\setminus(\Omega_{R_0}^{0}\setminus\Omega_{\varepsilon^{1/8}}^{0})|\leq C\varepsilon$ and \eqref{Dv11} that
\begin{align*}
\left|\int_{(\Omega_{R_0}\setminus\Omega_{\varepsilon^{1/8}})\setminus(\Omega_{R_0}^{0}\setminus\Omega_{\varepsilon^{1/8}}^{0})}\Big(\mathbb{C}e(\bw_{1}),e(\bw_{1})\Big)\right|\leq C\int_{\varepsilon^{1/8}<|\bx'|\leq R_0}\frac{\varepsilon}{|\bx'|^{4}}d\bx'\leq C\varepsilon^{3/4}.
\end{align*}
Using \eqref{difference Dv1 1} and \eqref{Dv11}, we have 
\begin{align*}
&\left|2\int_{\Omega_{R_0}^{0}\setminus\Omega_{\varepsilon^{1/8}}^{0}}\Big(\mathbb{C}e(\bw_{1}^{0}),e(\bw_{1}^{0}-\bw_{1}^{0})\Big)+\int_{\Omega_{R_0}^{0}\setminus\Omega_{\varepsilon^{1/8}}^{0}}\Big(\mathbb{C}e(\bw_{1}-\bw_{1}^{0}),e(\bw_{1}-\bw_{1}^{0})\Big)\right|\\
&\leq C\int_{\varepsilon^{1/8}<|\bx'|\leq R_0}\frac{\varepsilon^{1/4}}{|\bx'|^{2}}d\bx'+C\int_{\varepsilon^{1/8}<|\bx'|\leq R_0}\frac{\varepsilon^{1/2}}{|\bx'|^{2}}d\bx'\leq C\varepsilon^{1/4}|\log\varepsilon|.
\end{align*}
Hence, we obtain 
\begin{equation}\label{est-II-1}
\mbox{II}-\int_{\Omega_{R_0}^0\setminus\Omega_{\varepsilon^{1/8}}^0}\left(\mathbb{C}e(\bw_1^0),e(\bw_1^0)\right)=\Ocal(\varepsilon^{1/4}|\log\varepsilon|).
\end{equation}
Recalling the definition of $\bv_1^0$ as follows:
\begin{equation}\label{def-v10}
\bv_{1}^0(\bx)=\bar{v}^0(\bx)\varkappa_1
+\frac{\lambda+\mu}{\lambda+2\mu}f(\bar{v}^0(\bx))\, \partial_{x_{1}}\big(h_1(\bx')-h_2(\bx')\big)\,\varkappa_3,
\end{equation}
where $\bar{v}^0\in C^{2,\alpha}(\mathbb{R}^3)$ satisfying $\bar{v}^0=1$ on $\partial{D}_{1}^0\setminus\{0\}$, $\bar{v}^0=0$ on $\partial{D}_{2}^0$, $\bar{v}^0=\Ocal\left(|\bx|^{-1}\right)$ as $|\bx|\rightarrow\infty$,
\begin{equation*}
\bar{v}^0(\bx)=\frac{x_{3}-h_2(\bx')}{h_1(\bx')-h_2(\bx')},\quad\hbox{in}\ \Omega_{2R_0}^0,
\end{equation*}
and $\|\bar{v}^0\|_{C^{2,\alpha}(D^{e,0}\setminus\Omega_{R_0}^0)}\leq\,C$. Then together with Lemma \ref{lem-Dui0}, we obtain 
\begin{align*}
&\int_{\Omega_{R_0}^0\setminus\Omega_{\varepsilon^{1/8}}^0}\left(\mathbb{C}e(\bw_1^0),e(\bw_1^0)\right)\\
&=\int_{\Omega_{R_0}^{0}\setminus\Omega_{\varepsilon^{1/8}}^{0}}\Big(\mathbb{C}e(\bv_{1}^{0}),e(\bv_{1}^{0})\Big)+2\int_{\Omega_{R_0}^{0}\setminus\Omega_{\varepsilon^{1/8}}^{0}}\Big(\mathbb{C}e(\bv_{1}^{0}),e(\bw_{1}^{0}-\bv_{1}^{0})\Big)\nonumber\\ &\quad+\int_{\Omega_{R_0}^{0}\setminus\Omega_{\varepsilon^{1/8}}^{0}}\Big(\mathbb{C}e(\bw_{1}^{0}-\bv_{1}^{0}),e(\bw_{1}^{0}-\bv_{1}^{0})\Big)\nonumber\\
&=\int_{\Omega_{R_0}^{0}\setminus\Omega_{\varepsilon^{1/8}}^{0}}\Big(\mathbb{C}e(\bv_{1}^{0}),e(\bv_{1}^{0})\Big)+\mathcal{C}_1^1+\Ocal(\varepsilon^{1/4}).
\end{align*}
Coming back to \eqref{est-II-1}, we derive \eqref{est-II}.

{\bf Step 3.} We are now in a position to prove Proposition \ref{prop-asymp} for $i=1$. From Lemma \ref{lem-Dui} and $|\nabla\bv_1|\leq\frac{C}{\delta(\bx')}$, it follows that 
\begin{align*}
\mbox{III}
&=\int_{\Omega_{\varepsilon^{1/8}}}\Big(\mathbb{C}e(\bv_{1}),e(\bv_{1})\Big)+2\int_{\Omega_{\varepsilon^{1/8}}}\Big(\mathbb{C}e(\bv_{1}),e(\bw_{1}-\bv_{1})\Big)\nonumber\\
&\quad+\int_{\Omega_{\varepsilon^{1/8}}}\Big(\mathbb{C}e(\bw_{1}-\bv_{1}),e(\bw_{1}-\bv_{1})\Big)\nonumber\\
&=\int_{\Omega_{\varepsilon^{1/8}}}\Big(\mathbb{C}e(\bv_{1}),e(\bv_{1})\Big)+\Ocal(\varepsilon^{1/4}).
\end{align*}
This in combination with \eqref{est-I} and \eqref{est-II} yields
\begin{align}\label{E11-est}
\mathcal{E}_{1,1}=\int_{\Omega_{\varepsilon^{1/8}}}\Big(\mathbb{C}e(\bv_{1}),e(\bv_{1})\Big)+\int_{\Omega_{R_0}^{0}\setminus\Omega_{\varepsilon^{1/8}}^{0}}\Big(\mathbb{C}e(\bv_{1}^{0}),e(\bv_{1}^{0})\Big)+\mathcal{C}_1^2+\Ocal(\varepsilon^{1/4}|\log\varepsilon|),
\end{align}
where 
\begin{equation*}
\mathcal{C}_1^2=\mathcal{C}_1^1+\int_{D^{e,0}\setminus\Omega_{R_0}}\left(\mathbb{C}e(\bw_1^0),e(\bw_1^0)\right).
\end{equation*}
Recalling the definition of $\bv_1$ in \eqref{auxiliary improved} and using \eqref{delta_x'}, we find that the biggest term in $\Big(\mathbb{C}e(\bv_{1}),e(\bv_{1})\Big)$ is 
\begin{equation*}
\mu\Big(\partial_{x_3}\bv_1^{(1)}\Big)^2=\frac{\mu}{\delta^2(\bx')}=\frac{\mu}{\Big(\varepsilon+\kappa|\bx'|^{2}+O(|\bx'|^{2+\alpha})\Big)^2}.
\end{equation*}
The remaining terms are bounded by $\frac{C|\bx'|}{\delta^2(\bx')}$. Similarly, the leading order terms in $\Big(\mathbb{C}e(\bv_{1}^{0}),e(\bv_{1}^{0})\Big)$ is 
\begin{equation*}
\mu\Big(\partial_{x_3}(\bv_1^0)^{(1)}\Big)^2=\frac{\mu}{\Big(\kappa|\bx'|^{2}+O(|\bx'|^{2+\alpha})\Big)^2}.
\end{equation*}
These imply that
\begin{align}\label{equality e11}
\mathcal{E}_{1,1}&=\int_{\Omega_{\varepsilon^{1/8}}}\frac{\mu}{\Big(\varepsilon+\kappa|\bx'|^{2}+O(|\bx'|^{2+\alpha})\Big)^2} d\bx+\int_{\Omega_{R_0}^{0}\setminus\Omega_{\varepsilon^{1/8}}^{0}}\frac{\mu}{\Big(\kappa|\bx'|^{2}+O(|\bx'|^{2+\alpha})\Big)^2} d\bx\nonumber\\
&\quad+\mathcal{C}_1^2+\Ocal(\varepsilon^{1/4}|\log\varepsilon|)\nonumber\\
&=\int_{|\bx'|\leq\varepsilon^{1/8}}\frac{\mu}{\varepsilon+\kappa|\bx'|^{2}+O(|\bx'|^{2+\alpha})} d\bx'+\int_{\varepsilon^{1/8}<|\bx'|\leq R_0}\frac{\mu}{\kappa|\bx'|^{2}+O(|\bx'|^{2+\alpha})} d\bx'\nonumber\\
&\quad+\mathcal{C}_1^2+\Ocal(\varepsilon^{1/4}|\log\varepsilon|)\nonumber\\
&=\frac{\mu\,\pi}{\kappa}|\log\varepsilon|+\mathcal{C}_{1,1}+o(1),
\end{align}
where $\mathcal{C}_{1,1}$ is a constant independent of $\varepsilon$. Therefore, we finish the proof of the estimate of $\mathcal{E}_{1,1}$. The cases of $i=2,3,7,8,9$ follow a similar approach and are omitted. 

For $(i,j)=(1,7)$, replicating the same argument as above up to \eqref{E11-est}, we have 
\begin{align}\label{edt-E17}
\mathcal{E}_{1,7}=\int_{\Omega_{\varepsilon^{1/8}}}\Big(\mathbb{C}e(\bv_{1}),e(\bv_{7})\Big)+\int_{\Omega_{R_0}^{0}\setminus\Omega_{\varepsilon^{1/8}}^{0}}\Big(\mathbb{C}e(\bv_{1}^{0}),e(\bv_{7}^{0})\Big)+\mathcal{C}+\Ocal(\varepsilon^{1/4}|\log\varepsilon|),
\end{align}
where $\mathcal{C}$ is a constant independent of $\varepsilon$, $\bv_{7}$ and $\bv_{7}^{0}$ are defined by 
\begin{equation*}
\bv_{7}=(1-\bar{v})\varkappa_1
+\frac{\lambda+\mu}{\lambda+2\mu}f(1-\bar{v})\, \partial_{x_{1}}\delta(\bx')\,\varkappa_3\quad\mbox{in}~\Omega_{2R_0}
\end{equation*}
and 
\begin{equation*}
\bv_{7}^0=(1-\bar{v}^0)\varkappa_1
+\frac{\lambda+\mu}{\lambda+2\mu}f(1-\bar{v}^0)\, \partial_{x_{1}}\big(h_1(\bx')-h_2(\bx')\big)\,\varkappa_3\quad\mbox{in}~\Omega_{2R_0}^0.
\end{equation*}
Together with the definition of $\bv_1$ in \eqref{auxiliary improved}, \eqref{def-v10}, and using \eqref{delta_x'}, we get that the dominate terms in $\Big(\mathbb{C}e(\bv_{1}),e(\bv_{7})\Big)$ and $\Big(\mathbb{C}e(\bv_{1}^{0}),e(\bv_{1}^{0})\Big)$ are, respectively,
\begin{equation*}
\mu\partial_{x_3}\bv_1^{(1)}\partial_{x_3}\bv_7^{(1)}=-\frac{\mu}{\delta^2(\bx')}=-\frac{\mu}{\Big(\varepsilon+\kappa|\bx'|^{2}+O(|\bx'|^{2+\alpha})\Big)^2}
\end{equation*}
and
\begin{equation*}
\mu\partial_{x_3}(\bv_1^0)^{(1)}\partial_{x_3}(\bv_7^0)^{(1)}=-\frac{\mu}{\Big(\kappa|\bx'|^{2}+O(|\bx'|^{2+\alpha})\Big)^2}.
\end{equation*}
Then coming back to \eqref{edt-E17} and following the same calculation as in \eqref{equality e11}, we derive the estimate of $\mathcal{E}_{1,7}$. We omit the proof of the remaining cases since the argument is similar. Thus, the estimates in $(i)$ are proved.
\end{proof}

\subsection{Proof of $(ii)$ and $(iv)$ in Proposition \ref{prop-asymp}}
\begin{proof}[\bf Proof of $(ii)$ in Proposition \ref{prop-asymp}]
The relations \eqref{sym-Ei6} and \eqref{sym-Ei} directly follow from the definition of $\mathcal{E}_{i,j}$ in \eqref{def-Eij} and $(\mathbb{C}A,B)=(A,\mathbb{C}B)$ for every pair of $3\times 3$ matrices $A$ and $B$.

It follows from \eqref{def-Eij}, Lemma \ref{lem-Dui}, and \eqref{auxiliary improved} that
\begin{equation*}
\mathcal{E}_{4,4}=\int_{D^e}\left(\mathbb{C}e(\bw_4),e(\bw_4)\right)\leq C\int_{\Omega_{R_0}}|\nabla\bv_4|^2+C\leq C\int_{\Omega_{R_0}}\frac{|\bx'|^2}{\delta^2(\bx')}\ d\bx+C\leq C.
\end{equation*}
Furthermore, by using \cite[(4.18)]{BLL2015}, we obtain
\begin{align*}
\mathcal{E}_{4,4}\geq\frac{1}{C}\int_{\Omega_{R_0}\setminus\Omega_{R_0/2}}|e(\bw_4)|^2&\geq \frac{1}{C}\int_{\Omega_{R_0}\setminus\Omega_{R_0/2}}|\nabla\bw_4|^2\\
&\geq \frac{1}{C}\int_{\Omega_{R_0}\setminus\Omega_{R_0/2}}|\partial_{x_3}\bw_4^{(1)}|^2\\
&\geq\frac{1}{C}\int_{\Omega_{R_0}\setminus\Omega_{R_0/2}}\frac{x_2^2}{\delta^2(\bx')}\ d\bx\geq \frac{1}{C}.
\end{align*}
The proof of the case $i=5,6,10,11,12$ is similar and thus is omitted.
\end{proof}

\begin{proof}[\bf Proof of $(iv)$ in Proposition \ref{prop-asymp}]
Since $\mathcal{E}_{i,i}=\mathcal{E}_{i+6,i+6}$, $i=1,\dots,6$, we have 
\begin{equation*}
\mathcal{E}_{i,i}+\mathcal{E}_{i,i+6}=\frac{1}{2}\int_{D^e}\left(\mathbb{C}e(\bw_i+\bw_{i+6}),e(\bw_{i}+\bw_{i+6})\right).
\end{equation*}
On one hand, recalling that $\bv_{i+6}$ is defined as in \eqref{auxiliary improved} by replacing $\bar v$ with $\underline v$, then by using Lemma \ref{lem-Dui} and the definition of $\bv_{i}$ in \eqref{auxiliary improved}, we obtain
\begin{equation*}
\mathcal{E}_{i,i}+\mathcal{E}_{i,i+6}\leq C.
\end{equation*}
On the other hand, it follows from \cite[(4.18)]{BLL2015} that
\begin{equation*}
\mathcal{E}_{i,i}+\mathcal{E}_{i,i+6}\geq \frac{1}{C}\int_{D^e}|e(\bw_i+\bw_{i+6})|^2\ d\bx\geq0.
\end{equation*}
By \eqref{sym-Ei6} and $\mathcal{E}_{i,i}=\mathcal{E}_{i+6,i+6}$, $i=1,\dots,6$, we obtain
\begin{equation*}
\mathcal{E}_{i,i}-\mathcal{E}_{i,i+6}=\frac{1}{2}\int_{D^e}\left(\mathbb{C}e(\bw_i-\bw_{i+6}),e(\bw_{i}-\bw_{i+6})\right).
\end{equation*}
Then the same argument as above yields
\begin{equation*}
0\leq \mathcal{E}_{i,i}-\mathcal{E}_{i,i+6}\leq C.
\end{equation*}
Together with $\mathcal{E}_{i,i}=\mathcal{E}_{i+6,i+6}$ and $\mathcal{E}_{i,i+6}=\mathcal{E}_{i+6,i}$, the second inequality in \eqref{sum-Eii} is proved. The proof of Proposition \ref{prop-asymp} is finished.
\end{proof}





\bibliographystyle{abbrv}
\bibliography{res_stre}{}

\end{document}